\newtheorem{theorem}{Theorem}
\newtheorem{Lemma}{Lemma}
\newtheorem{remark}{Remark}
\newtheorem{corollary}{Corollary}
\title{Some properties of dihedral group codes}
\date{ }
\author{
  Kirill V.~Vedenev \\
  Department of Algebra and Discrete Mathematics\\
  Southern Federal University\\
  Rostov-on-Don, Russia \\
  \texttt{vedenevk@gmail.com} \\
   \And
  Vladimir M.~Deundyak \\
  Department of Algebra and Discrete Mathematics\\
  Southern Federal University \\
  Rostov-on-Don, Russia, \\
  Research Institute <<Specvuzavtomatika>> \\
  Rostov-on-Don, Russia \\
  \texttt{vl.deundyak@gmail.com} \\
}
\begin{document}
\maketitle
\begin{abstract}
In this paper, we study the dihedral codes, i.e. the left ideals of $\mathbb{F}_qD_{n}$ in the case $\gcd(q, n) = 1$.
An explicit algebraic description of the dihedral codes and their duals is obtained. 
In addition, a criterion for self-duality of a dihedral code is obtained.
Bases, generating and check matrices of dihedral codes are constructed. 
Given a dihedral code, we consider exterior and interior codes that are induced by cyclic codes. Using this codes, some properties of generating matrices are described and a connection to cyclic code theory is established.
In addition, some estimates of code parameters are obtained and several illustrative examples are given. 
\end{abstract}

\textbf{Keywords:} dihedral group, dihedral codes, group codes, Wedderburn decomposition, dual codes, induced codes, cyclic codes 

\textbf{MSC:} 16S34 (Primary), 16D25, 94B05, 94B60 (Secondary)

\section*{Introduction}
Let $G$ be a finite group, and let $\mathbb{F}$ be a field. The $\mathbb{F}$--vector space $ \mathbb{F}G := \left\{ \sum_{g \in G} \alpha_g g \mid \alpha_g \in \mathbb{F} \right\}$ with the basis $\{ g \in G \}$, equipped with multiplication defined as follows
\[ 
\left( \sum_{g \in G} a_g g \right) \left(\sum_{g \in G} b_g g \right) = \sum_{g \in G} \left(\sum_{h \in G} a_h b_{h^{-1}g} \right) g,
\]
is called the \textit{group algebra of $G$ over $\mathbb{F}$} (\cite{MilSeh}, p. 131).

Let $\mathbb{F}_q$ be a finite field of order $q$. Any one--sided ideal $I$ of $\mathbb{F}_qG$ is called a \textit{group code} or \textit{$G$--code}. This algebraic approach to study linear codes, introduced independently by S.~Berman in \cite{berman1967theory} and F.~MacWilliams in \cite{macwilliams1970binary}, allows one to build new classes of linear codes and to study some properties of existing ones using algebraic methods (see \cite{zimmerman, borello2020group}, surveys \cite{kelarev2001error, Mil19surv}). In particular, additional algebraic structure can provide efficient coding and decoding algorithms (\cite{DeuKos15, Deundyak2018, deundyak2020graph}). 

Let $u = \sum_{g \in G} u_g g \in \mathbb{F}_qG$. The set $\mathtt{supp}\left( u \right):=\{ g \in G \mid u_g \neq 0 \}$ is called the \textit{support} of $u$ and $w(u) = |\mathtt{supp}\left( u \right)|$ is called the \textit{Hamming weight} of $u$. The minimum distance of a $G$--code $C$ is defined as
\[ d(C) := \min_{c \in C, \; c \neq 0} w(c). \]
Recall that a code $C \subset \mathbb{F}_qG$ is called a $[n,k,d]$--code if $n=|G|$, $k = \dim_{\mathbb{F}_q}(C)$, $d = d(C)$. 

A $G$--code $C$ is called \textit{abelian} if $G$ is abelian, otherwise $C$ is called \textit{non--abelian}. Note that the antiautomorphism $x \mapsto x^*$ of $\mathbb{F}G$, given by
\begin{equation} \label{eq:anti}
	\left( \sum_{h \in G} x_h h \right)^{*} := \sum_{h \in G} x_{h^{-1}} h,
\end{equation}
establishes one--to--one correspondence between the left and the right ideals of $\mathbb{F}_qG$, thus it is enough to consider the $G$--codes as the left ideals. 

Many classical codes such as cyclic codes, Reed-Solomon codes and Reed-Muller codes are known to be abelian group codes (see \cite{kelarev2001error, CouGonMar12}). But since non-abelian group algebras have much richer algebraic structure, non--abelian group codes are of particular interest. Non--abelian codes could have very good parameters and proprieties. For example, in \cite{Mil19} some metacyclic group codes with good parameters were constructed and in \cite{Cao} some good self--dual dihedral codes were described. A survey on recent results can be found in \cite{Mil19surv, gao2020lcd}.

In addition, non--abelian codes could possibly  be used to create post--quantum public--key encryption protocols \cite{Bernstein2017, Sendrier}. The first code--based encryption protocol
was developed by R. McEliece in 1978 \cite{MC}. It is based on the use of binary Goppa codes and remains to be secure up to now. The main drawback of the original McEliece cryptosystem is large public key size. In order to overcome it, there were attempts to use other classes of codes, including some of well--known abelian group codes (e.g. Reed--Solomon and Reed--Muller codes), but the most of these modifications were proven to be less secure (see survey in \cite{DeuKos19}). We note that non--abelian groups are a rich source of new classes of promising linear codes to use in code--based cryptography. In addition, non--commutativity could possibly improve the security of code--based encryption protocols. Non--abelian group codes that have some good properties could also be an option to develop secret sharing and secure multiparty computation protocols (\cite{cascudo2018squares}).

By $D_{n}=\langle a, b \mid a^n=1,\: b^2=1,\: bab = a^{-1} \rangle$, $n \geq 2$, we denote the dihedral group of order $2n$.  In \cite{Mar15} the Wedderburn decomposition of the algebra $\mathbb{F}_qD_{n}$, $\gcd(q, 2n)=1$, was described. Using this decomposition, in \cite{VedDeu18}  the authors have obtained algebraic description of all $D_{n}$--codes over $\mathbb{F}_q$ in the case $\gcd(q,2n)=1$. In \cite{VedDeu20} the connection between the codes in $\mathbb{F}_qD_{n}$ and the idempotents of $\mathbb{F}_qD_{n}$ was established in the case $\gcd(q,2n)=1$; also the inverse Wedderburn decomposition isomorphism was explicitly described. In addition, in \cite{VedDeu20} these results were used to study the induced dihedral codes. 
In \cite{VedDeu19} the non--semisimple group algebra over $D_{n} \times D_{m}$ was considered, under certain conditions, the structure of this algebra was described and a generalization of Wedderburn decomposition was obtained; these results were used to describe the codes in this algebra.

In this paper, we continue studying dihedral codes and their properties in the more general case $\gcd(q, n) = 1$. The paper is organized as follows. 
In Section \ref{sec:prel}, some preliminaries on the structure of $\mathbb{F}_qD_n$ and on algebraic description of $D_n$--codes are given. Moreover, some results previously proved in \cite{VedDeu18, VedDeu20} in the case $\gcd(q, 2n) = 1$ are generalized to the case $\gcd(q, n) = 1$. In Section \ref{sec:dual}, for a given $D_n$--code the explicit algebraic description of its dual code is obtained. In addition, the self--dual $D_n$--codes are descried.
In Section \ref{sec:matr}, bases, generating and check matrices of any $D_n$--code are explicitly calculated.
In Section \ref{sec:ind}, for a $D_n$--code $C$ we introduce and study its exterior and interior induced codes $C_{ext}$ and $C_{int}$. The results of this section allows us to study some properties of dihedral codes using cyclic code theory.
In Section \ref{sec:est}, we provide several estimates on code parameters and consider several illustrative examples.
We believe that the results obtained in Sections \ref{sec:matr}---\ref{sec:est} could be used to study the security of code cryptosystems based on $D_n$--codes.

\section{Preliminaries} \label{sec:prel}
\textbf{Dihedral group algebra.}
Since $D_{n}=\langle a, b \mid a^n=1,\: b^2=1,\: bab = a^{-1} \rangle$, it follows that $a^{i}b = ba^{-i}$ for any $i$ and
\begin{equation} \label{eq:dih}
	D_{n}=
	\{e, a, a^2, ..., a^{n-1}, b, ba, ba^2,... , ba^{n-1} \} = 
	\{e, a, a^2, ..., a^{n-1}, b, a^{n-1}b, a^{n-2}b,... , ab \}.
\end{equation}
Eq. (\ref{eq:dih}) implies that any element $u \in \mathbb{F}_qD_{n}$ can be represented as follows:
\begin{equation} \label{eq:pq}
u = P(a) + bQ(a) = P(a) + Q(a^{-1})b,
\end{equation}
where $P$ and $Q$ are polynomials over $\mathbb{F}_q$ of degree less than $n$.	

In \cite{Mar15} in the case $\gcd(q,2n)=1$ the Wedderburn decomposition of $\mathbb{F}_qD_{n}$  was obtained. Below, we consider its generalization in the case when \textit{ $\gcd(q, n) = 1$.}

\textit{Hereinafter we assume that  $\mathrm{gcd}(n,q) = 1$, i.e. $n$ and $q$ are coprime.} 

For a polynomial $g \in \mathbb{F}_q[x]$ such that $g(0) \neq 0$, the reciprocal polynomial is defined as $g^{*}(x) = x^{\mathrm{deg}(g)} g(x^{-1})$. A polynomial $g$ is called auto--reciprocal if $g(x)$ and $g^*(x)$ have the same roots in its splitting field.

As is well--known, the polynomial $x^n-1 \in \mathbb{F}_q[x]$ can be split into monic irreducible factors over $\mathbb{F}_q$;  following \cite{Mar15}, p. 205, we write this decomposition as follows:
\begin{equation}
\label{eq:dec} x^n-1 = (f_1 f_2 \dots f_r) (f_{r+1} f^{*}_{r+1} f_{r+2} f^{*}_{r+2} \dots f_{r+s} f^{*}_{r+s}),
\end{equation}
where $f_1 = x-1$, $f^{*}_j = f_j$ for $1 < j \leq r$, and $f_2=x+1$ if $n$ is even. Here $r$ denotes the number of auto--reciprocal factors and $2s$ denotes the number of non--auto--reciprocal factors.

By $\alpha_j$ we denote a root of $f_j$ from (\ref{eq:dec}). Let
$$\zeta(n) :=
\begin{cases}
1, & \text{$n$ is odd}, \\
2, & \text{$n$ is even}.
\end{cases}$$
Let $\mathrm{M}_2[\mathbb{F}]$ be the algebra of $(2 \times 2)$ matrices over a field $\mathbb{F}$, and let $\langle h \mid h^2 \rangle$ be the cyclic group of order $2$.
Let us define $\mathbb{F}_q$--algebras homomorphisms $\tau_j$ (see \cite{Mar15}) and $\gamma_j$ as follows:
\begin{enumerate} 
	\item $\gamma_1: \mathbb{F}_qD_{n} \rightarrow 
	\mathbb{F}_q \langle h \mid h^2 = e \rangle$, where $\gamma_1 \left( P(a) + bQ(a) \right) = P(1) + Q(1)h$;
	
	\item $\gamma_2: \mathbb{F}_qD_{n} \rightarrow 
	\mathbb{F}_q \langle h \mid h^2 = e \rangle$, where $\gamma_2 \left( P(a) + bQ(a) \right) = P(-1) + Q(-1)h$ for even $n$;
	
	\item $\tau_j: \mathbb{F}_qD_{n} \rightarrow \mathrm{M}_2(\mathbb{F}_q[\alpha_j])$, where $\tau_j \left( P(a) + bQ(a) \right) = \begin{pmatrix} P(\alpha_j) & Q(\alpha_j^{-1}) \\ Q(\alpha_j) & P(\alpha_j^{-1}) \end{pmatrix}$, $j \geq \zeta(n) + 1$.
\end{enumerate}	
For $\zeta(n) + 1 \leq j \leq r$ consider the automorphisms $\sigma_j$ of the algebras $\mathrm{M}_2(\mathbb{F}_q[\alpha_j])$ given by
\begin{equation} \label{eq:zj}
\sigma_j(X) := Z_j^{-1} X Z_j, \quad Z_j := \begin{pmatrix} 1 &  -\alpha_j \\ 1 & -\alpha_j^{-1} \end{pmatrix}.
\end{equation}
As proved in \cite{Mar15}, $\sigma_j (\mathrm{im} (\tau_j)) = \mathrm{M}_2(\mathbb{F}_q[\alpha_j + \alpha_j^{-1}])$ and for $\zeta(n) + 1 \leq j  \leq r$ we have
\begin{equation} \label{eq:dim} 
\dim_{\mathbb{F}_q}(\mathbb{F}_q[\alpha_j + \alpha_j^{-1}]) = {\deg(f_j)}/{2}.
\end{equation}

\begin{theorem}[\cite{Mar15}, Theorem 3.1, Remark 3.4] \label{theor:decomp}
	Let $\gcd(q, n)=1$. Then the following algebra isomorphism holds:
	\begin{equation} \label{eq:2}
	\mathcal{P} = \bigoplus_{j=1}^{r+s} \mathcal{P}_j: \; \mathbb{F}_q D_{n} \rightarrow \bigoplus_{j=1}^{r+s} A_j,
	\end{equation}	
	where
	$$
	\mathcal{P}_j := \begin{cases}
	\gamma_j, & 1 \leq j \leq \zeta(n) \\
	\sigma_j \tau_j, & \zeta(n)+1 \leq j \leq r \\ \tau_j,& r+1 \leq j \leq r+s
	\end{cases}, \quad	
	A_j := \begin{cases}
	\mathbb{F}_q\langle h \mid h^2 \rangle, & 1 \leq j \leq \zeta(n)  \\
	\mathrm{M}_2 (\mathbb{F}_q[\alpha_j + \alpha_j^{-1}]), & \zeta(n) + 1 \leq j \leq r \\	
	\mathrm{M}_2 (\mathbb{F}_q[\alpha_j]), & r + 1 \leq j \leq r+s
	\end{cases}.
	$$			
\end{theorem}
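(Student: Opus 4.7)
The plan is to verify directly that $\mathcal{P}$ is an $\mathbb{F}_q$-algebra isomorphism, adapting the argument of \cite{Mar15} (which handles the semisimple case $\gcd(q,2n)=1$) to the weaker hypothesis $\gcd(q,n)=1$. The genuinely new situation is characteristic $2$ with $n$ odd, where $\mathbb{F}_qD_n$ itself fails to be semisimple; the statement still makes sense because the summand $A_1=\mathbb{F}_q\langle h\mid h^2\rangle$ is isomorphic to $\mathbb{F}_q[h]/(h-1)^2$ in that setting and absorbs the non-semisimple part.

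First I would check that each $\mathcal{P}_j$ is a well-defined $\mathbb{F}_q$-algebra homomorphism. By the universal property of the group algebra this reduces to verifying that the images of $a$ and $b$ satisfy $a^n=1$, $b^2=1$, $bab=a^{-1}$. For $\gamma_1$ (and $\gamma_2$ when $n$ is even) this is immediate from $h^2=1$ and $(\pm 1)^n=1$. For $\tau_j$ one reads off $\tau_j(a)=\mathrm{diag}(\alpha_j,\alpha_j^{-1})$ and $\tau_j(b)=\begin{pmatrix}0&1\\1&0\end{pmatrix}$, and the three relations reduce to direct $2\times 2$ matrix identities (using $\alpha_j^n=1$). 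Composition with the inner automorphism $\sigma_j$ preserves the homomorphism property, while the identification $\sigma_j(\mathrm{im}\,\tau_j)=\mathrm{M}_2(\mathbb{F}_q[\alpha_j+\alpha_j^{-1}])$ for auto-reciprocal $f_j$ relies only on $\alpha_j^{-1}$ being another root of $f_j$, so it can be imported from \cite{Mar15} unchanged.

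Next I would prove injectivity. Take $u=P(a)+bQ(a)\in\ker\mathcal{P}$ with $\deg P,\deg Q<n$. Vanishing in each block forces $P(\alpha_j)=Q(\alpha_j)=0$ for every $j$ (and $P(\alpha_j^{-1})=Q(\alpha_j^{-1})=0$ for $j>r$), so each irreducible factor $f_j$ (and $f_j^*$ for $j>r$) divides both $P$ and $Q$; these factors are pairwise coprime because $\gcd(q,n)=1$ makes $x^n-1$ separable, and therefore their product $x^n-1$ divides $P$ and $Q$, forcing $P=Q=0$. For dimensions, \eqref{eq:dim} combined with $\dim_{\mathbb{F}_q}\mathbb{F}_q[\alpha_j]=\deg f_j$ for $j>r$ gives
\begin{equation*}
\sum_{j=1}^{r+s}\dim_{\mathbb{F}_q}A_j = 2\zeta(n) + \sum_{j=\zeta(n)+1}^{r}2\deg f_j + \sum_{j=r+1}^{r+s}4\deg f_j = 2\Bigl(\sum_{j=1}^{r}\deg f_j + \sum_{j=r+1}^{r+s}\deg(f_jf_j^*)\Bigr) = 2n = \dim_{\mathbb{F}_q}\mathbb{F}_qD_n,
\end{equation*}
so the injection upgrades to the desired isomorphism.

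The main obstacle I foresee is the auto-reciprocal block: one must both show $\sigma_j(\mathrm{im}\,\tau_j)\subseteq \mathrm{M}_2(\mathbb{F}_q[\alpha_j+\alpha_j^{-1}])$ via the explicit conjugation by $Z_j$ from \eqref{eq:zj}, and exhibit enough elements in the image to fill the entire subalgebra (typically by choosing specific polynomials $P$ that produce a basis of $2\times 2$ matrices). Once that point is in hand as in \cite{Mar15}, the remainder of the proof depends only on the separability of $x^n-1$, and thus transfers cleanly from $\gcd(q,2n)=1$ to $\gcd(q,n)=1$.
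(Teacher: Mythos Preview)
The paper does not supply its own proof of this theorem: it is quoted verbatim as a result of \cite{Mar15} (Theorem 3.1, Remark 3.4) and is used as a black box throughout. Consequently there is no ``paper proof'' against which to compare your attempt; your sketch is an independent reconstruction of the argument that the cited reference contains.

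That said, your outline is sound. Checking that $\mathcal{P}_j$ respects the defining relations of $D_n$, establishing injectivity via separability of $x^n-1$ (which uses only $\gcd(q,n)=1$), and finishing with the dimension count is exactly the standard route, and your observation that the characteristic-$2$ case is absorbed by the non-semisimple summand $\mathbb{F}_q\langle h\mid h^2\rangle\cong\mathbb{F}_q[h]/(h-1)^2$ is the right explanation for why the weaker hypothesis suffices. The one place where real work hides is, as you flag, showing that $\sigma_j$ carries $\mathrm{im}(\tau_j)$ \emph{onto} $\mathrm{M}_2(\mathbb{F}_q[\alpha_j+\alpha_j^{-1}])$ for the auto-reciprocal factors; this requires an explicit computation with $Z_j$ and the Galois involution $\alpha_j\mapsto\alpha_j^{-1}$, and you are correct that it depends only on $f_j=f_j^*$ and not on the parity of $q$, so it transfers from \cite{Mar15} without change.
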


\textbf{Construction of $\mathcal{P}^{-1}$.} To study dihedral codes we also need the explicit construction of $\mathcal{P}^{-1}$. In \cite{VedDeu20} it was obtained in the case when $\gcd(q, 2n) = 1$. Below, we describe $\mathcal{P}^{-1}$ in the case  $\gcd(q, n) = 1$.

Recall that an element $i$ in a ring $R$ is called \textit{an idempotent} if $i^2= i$. In \cite{Mar15} it was proved that for any monic divisor $g(x)$ of the polynomial $x^n-1$, the element
\begin{equation} \label{eq:ef} e_g(x) := -\frac{[(g(x)^*)']^*}{n} \cdot \frac{x^n-1}{g(x)} \end{equation}
is an idempotent of the ring $\mathcal{R}_n := \mathbb{F}_q[x] / \langle x^n-1 \rangle$, and it generates the left ideal $\mathcal{R}_n e_g(x) = \mathcal{R}_n f(x)$, where $f(x)$ stands for ${(x^n - 1)}/{g(x)}$. 	
Note that for any $P(x) \in \mathbb{F}_q[x]$ the expression $(P'(x))^*$ stands for $x^{\deg(P) - 1} P'\left( {1}/{x} \right)$.

Lemma 2 from \cite{VedDeu20} yields that for any root $\alpha$ of $x^n-1$ we have
\begin{equation} \label{eq:orth}
e_{g}(\alpha) = \begin{cases}
1, & g(\alpha) = 0 \\
0, & f(\alpha) = 0
\end{cases}.
\end{equation}

Let $\langle h \mid h^n \rangle$ be the cyclic group of order $n$. Since the map
$\Phi: \mathbb{F}_q \langle h \mid h^n \rangle \rightarrow \mathcal R_n$,
$\Phi(h) = [x]$,
is a $\mathbb{F}_q$--algebra isomorphism, it follows that $e_{g}(a) \in \mathbb{F}_qD_{n}$ are idempotents of $\mathbb{F}_qD_{n}$.

By $\mathrm{id}_S$ we denote the identity map on a set $S$. For $\zeta(n) + 1 \leq j \leq r$ let
\begin{equation} \label{eq:fg}
(x-1) F_{j,1}(x) + f_j(x)F_{j,2}(x) = 1, \quad
(x+1) G_{j,1}(x) + f_j(x)G_{j, 2}(x) = 1
\end{equation}
be the Bezout relations for $\gcd(f_j(x), x-1)=1$ and $\gcd(f_j(x), x+1)=1$, respectively.

\begin{theorem} \label{theor:3}
	Let \( \gcd(q, n) = 1 \), and let the maps $\epsilon_j: A_j \rightarrow \mathbb{F}_qD_{n}$ be defined as
\begin{enumerate}
	\item[1)] for $1 \leq j \leq \zeta(n)$
	\[ \epsilon_j: w_1 + w_2h \mapsto (w_1 + w_2b) e_{f_j}(a), \quad w_1, w_2 \in \mathbb{F}_q \]	
	\item[2)] for $\zeta(n) + 1 \leq j \leq r$ and $M, N, P, Q \in \mathbb{F}_q[x]$:
	\[ 
		\epsilon_j:
		\begin{pmatrix}
		M(\alpha_j + \alpha_j^{-1}) & P(\alpha_j + \alpha_j^{-1}) \\
		N(\alpha_j + \alpha_j^{-1}) & Q(\alpha_j + \alpha_j^{-1}) 
		\end{pmatrix} \mapsto
		(T_1(a) + bT_2(a))e_{f_j}(a),
	\]
	where
	\[ T_1(a) = F_{j, 1}(a)G_{1, j}(a)\left[ -M(a+a^{-1}) + a N(a + a^{-1}) - aP(a + a^{-1}) + a^2Q(a+a^{-1}) \right], \]
	\[ T_2(a) = F_{j, 1}(a)G_{1, j}(a)\left[ -M(a+a^{-1}) + a^{-1}N(a + a^{-1}) - aP(a+a^{-1}) + Q(a+a^{-1})  \right], \]
	\item[3)] for $r+1 \leq j \leq r+s$ and $M, N, P, Q \in \mathbb{F}_q[x]$
	\[ \epsilon_j: \begin{pmatrix}
	M(\alpha_j) & P(\alpha_j^{-1}) \\ N(\alpha_j) & Q(\alpha_j^{-1})
	\end{pmatrix} \mapsto
	[M(a) + bN(a)] e_{f_j}(a) + [Q(a) + bP(a)] e_{f^*_j}(a). \]
\end{enumerate}
	where polynomial representations of elements of field extensions are used in \textit{ 2), 3) }. Then $\mathcal{P}^{-1} = \sum_{j=1}^{r+s} \epsilon_j$ and the maps $\epsilon_j$ are $\mathbb{F}_q$--algebras monomorphisms such that
		\begin{eqnarray} \label{eq:eps}
		\mathcal P_j \epsilon_j = \mathrm{id}_{A_j}, \quad \forall i \in \{ 1 \dots r+s \} \;  \left( i \neq j \right) \Rightarrow \mathcal P_i \epsilon_j = 0.
		\end{eqnarray}
\end{theorem}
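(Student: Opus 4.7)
My approach is to reduce the whole theorem to the two identities in \eqref{eq:eps}. Once those are established, note that $\mathcal{P} = \sum_i \mathcal{P}_i$ is an algebra isomorphism, and the relations $\mathcal{P}_i \epsilon_j = \delta_{ij}\,\mathrm{id}_{A_j}$ imply
\[
\mathcal{P}\Big(\sum_{j=1}^{r+s}\epsilon_j(x_j)\Big) = \sum_{i,j}\mathcal{P}_i\epsilon_j(x_j) = (x_1,\dots,x_{r+s}),
\]
so $\sum_j \epsilon_j$ is the two-sided inverse of $\mathcal{P}$. Because different components of $\bigoplus_j A_j$ annihilate each other, this further forces each $\epsilon_j$ to respect multiplication, and injectivity follows from $\mathcal{P}_j\epsilon_j = \mathrm{id}$; thus each $\epsilon_j$ is an algebra monomorphism.

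The verification of \eqref{eq:eps} rests on one arithmetic fact coming from \eqref{eq:orth}: for every irreducible factor $f_k$ of $x^n-1$ and every root $\alpha_k$ of $f_k$ we have $e_{f_j}(\alpha_k)=\delta_{jk}$ (and similarly for $\alpha_k^{-1}$, using that the roots of $f_j^*$ are the inverses of the roots of $f_j$). This immediately handles $\mathcal{P}_i\epsilon_j=0$ for $i\neq j$: the image of $\epsilon_j$ lies in the ideal generated by $e_{f_j}(a)$ (resp.\ $e_{f_j}(a)$ and $e_{f^*_j}(a)$ for $j>r$), and applying $\mathcal{P}_i$ evaluates the coefficient polynomials at $\alpha_i$ (or $\pm 1$), producing zero.

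For $\mathcal{P}_j\epsilon_j = \mathrm{id}_{A_j}$ I would work through the three cases. Case 1 ($1\le j\le\zeta(n)$) is immediate: write $(w_1+w_2b)e_{f_j}(a)$ in the form $P(a)+bQ(a)$ with $P=w_1e_{f_j},\ Q=w_2e_{f_j}$ and evaluate $\gamma_j$ at $\alpha_j=\pm 1$, using $e_{f_j}(\pm 1)=1$. Case 3 ($r+1\le j\le r+s$) is also direct: rewrite the image as $P'(a)+bQ'(a)$ with $P'(a)=M(a)e_{f_j}(a)+Q(a)e_{f^*_j}(a)$ and $Q'(a)=N(a)e_{f_j}(a)+P(a)e_{f^*_j}(a)$, apply $\tau_j$, and read off the four matrix entries using the selection rules $e_{f_j}(\alpha_j)=1$, $e_{f^*_j}(\alpha_j)=0$, $e_{f_j}(\alpha_j^{-1})=0$, $e_{f^*_j}(\alpha_j^{-1})=1$.

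The main obstacle, and where I would spend the most care, is Case 2 ($\zeta(n)+1\le j\le r$). The target identity $\sigma_j\tau_j\epsilon_j(X)=X$ is equivalent to $\tau_j\epsilon_j(X) = Z_j X Z_j^{-1}$. The Bezout relations \eqref{eq:fg} give $F_{j,1}(\alpha_j)=(\alpha_j-1)^{-1}$ and $G_{j,1}(\alpha_j)=(\alpha_j+1)^{-1}$, hence
\[
F_{j,1}(\alpha_j)G_{j,1}(\alpha_j)=\frac{1}{\alpha_j^2-1}=\frac{1}{\alpha_j(\alpha_j-\alpha_j^{-1})},
\]
and the analogous value at $\alpha_j^{-1}$. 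Substituting into the definitions of $T_1$ and $T_2$, and using that $M,N,P,Q$ are evaluated at the symmetric quantity $\alpha_j+\alpha_j^{-1}$ (so $M(\alpha_j+\alpha_j^{-1})=M(\alpha_j^{-1}+\alpha_j)$ etc.), I would compute the four scalars $T_1(\alpha_j),T_1(\alpha_j^{-1}),T_2(\alpha_j),T_2(\alpha_j^{-1})$ that form $\tau_j\epsilon_j(X)$ and check they match the entries of $Z_j X Z_j^{-1}$ obtained from $Z_j^{-1}=(\alpha_j-\alpha_j^{-1})^{-1}\bigl(\begin{smallmatrix}-\alpha_j^{-1}&\alpha_j\\-1&1\end{smallmatrix}\bigr)$. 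This is a direct but bookkeeping-heavy linear-algebra identity; once it holds at the root $\alpha_j$ it automatically lifts to an identity in $A_j=\mathrm{M}_2(\mathbb{F}_q[\alpha_j+\alpha_j^{-1}])$ because the arithmetic operations used are polynomial.
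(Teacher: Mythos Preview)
Your proposal is correct and follows essentially the same route as the paper: both reduce everything to the relations \eqref{eq:eps}, dispatch $\mathcal P_i\epsilon_j=0$ via the orthogonality \eqref{eq:orth} of the idempotents $e_{f_j}(a)$, and then verify $\mathcal P_j\epsilon_j=\mathrm{id}_{A_j}$ case by case, with Case~2 handled by using the Bezout relations to rewrite $F_{j,1}(\alpha_j)G_{j,1}(\alpha_j)=(\alpha_j^2-1)^{-1}$ and checking the conjugation identity by a direct $2\times 2$ matrix computation. Your concluding argument that $\sum_j\epsilon_j=\mathcal P^{-1}$ and that each $\epsilon_j$ is an algebra monomorphism (injectivity from the left inverse, multiplicativity from the fact that $\mathcal P^{-1}$ restricted to the $j$-th summand is $\epsilon_j$) is exactly what the paper does, only spelled out a bit more explicitly.
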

\begin{proof}
	Below, we use defenitions of $\mathcal{P}, \mathcal{P}_j, \gamma_j, \tau_j, \sigma_j$ from (\ref{eq:2}). Note that (\ref{eq:2}) and (\ref{eq:orth}) yield $\mathcal{P}_j \epsilon_i = 0$ for $i \neq j$. We will prove that $\mathcal{P}_j \epsilon_j$ is the identity map on $A_j$. In the proof we consider three cases.
	
	In the case 1) we have
	\[ \mathcal{P}_j \epsilon_j(w_1 + w_2h) = \gamma_j \left( (w_1 + w_2b) e_{f_j}(a) \right) = w_1 e_{f_j}(1) + w_2 e_{f_j}(1)h = w_1 + w_2h.  \]
	
	In the case 2), Eq. (\ref{eq:fg}) implies that
	$F_{j, 1}(\alpha_j)G_{1, j}(\alpha_j) = (\alpha_j - 1)^{-1}(\alpha_j + 1)^{-1} = (\alpha_j^2 - 1)^{-1}$. Hence
	\[ 
	    T_1(\alpha_j) = \frac{-M(\alpha_j+\alpha_j^{-1}) + \alpha_j N(\alpha_j + \alpha_j^{-1}) - \alpha_j P(\alpha_j + \alpha_j^{-1}) + \alpha_j^2 Q(\alpha_j+\alpha_j^{-1})}{\alpha_j^2 - 1}, 
	\]
	\[ 
	T_1(\alpha_j^{-1}) = \frac{\alpha_j^2 M(\alpha_j+\alpha_j^{-1}) - \alpha_j N(\alpha_j + \alpha_j^{-1}) + \alpha_j P(\alpha_j + \alpha_j^{-1}) - Q(\alpha_j+\alpha_j^{-1})}{\alpha_j^2 - 1}, 
	\]
	\[
	   T_2(\alpha_j) = \frac{ -M(\alpha_j+\alpha_j^{-1}) + \alpha_j^{-1}N(\alpha_j + \alpha_j^{-1}) - \alpha_j P(\alpha_j+\alpha_j^{-1}) + Q(\alpha_j+\alpha_j^{-1}) }{\alpha_j^2 - 1},
	\]
	\[
	T_2(\alpha_j^{-1}) = \frac{ \alpha_j^2 M(\alpha_j+\alpha_j^{-1}) - \alpha_j^{3}N(\alpha_j + \alpha_j^{-1}) + \alpha_j P(\alpha_j+\alpha_j^{-1}) - \alpha_j^2 Q(\alpha_j+\alpha_j^{-1}) }{\alpha_j^2 - 1},
	\]
	It follows that
	\[ 
	    \mathcal{P}_j \epsilon_j \begin{pmatrix}
	    M(\alpha_j + \alpha_j^{-1}) & P(\alpha_j + \alpha_j^{-1}) \\
	    N(\alpha_j + \alpha_j^{-1}) & Q(\alpha_j + \alpha_j^{-1}) 
	    \end{pmatrix}  =
	    \mathcal{P}_j \left( T_1(a) + bT_2(a)) e_{f_j}(a) \right) =
	    \sigma_j \tau_j  \left(T_1(a) + bT_2(a)) e_{f_j}(a) \right) =
	\]
	\[
	    = Z_j^{-1} \begin{pmatrix}
		T_1(\alpha_j) & T_2(\alpha_j^{-1}) \\
		T_2(\alpha_j) & T_1(\alpha_j^{-1}) 
		\end{pmatrix} Z_j =	
		{ 
		\begin{pmatrix} 1 &  -\alpha_j \\ 1 & -\alpha_j^{-1} \end{pmatrix}^{-1}
		}
		\begin{pmatrix}
		T_1(\alpha_j) & T_2(\alpha_j^{-1}) \\
		T_2(\alpha_j) & T_1(\alpha_j^{-1}) 
		\end{pmatrix}
		\begin{pmatrix} 1 &  -\alpha_j \\ 1 & -\alpha_j^{-1} \end{pmatrix}.
	\]
	By direct calculations, we obtain
	\[  \mathcal{P}_j \epsilon_j \begin{pmatrix}
	M(\alpha_j + \alpha_j^{-1}) & P(\alpha_j + \alpha_j^{-1}) \\
	N(\alpha_j + \alpha_j^{-1}) & Q(\alpha_j + \alpha_j^{-1}) 
	\end{pmatrix}
	 = \begin{pmatrix}
	M(\alpha_j + \alpha_j^{-1}) & P(\alpha_j + \alpha_j^{-1}) \\
	N(\alpha_j + \alpha_j^{-1}) & Q(\alpha_j + \alpha_j^{-1}) 
	\end{pmatrix}. \]
	
	In the case 3), similarly, using (\ref{eq:2}) and (\ref{eq:orth}), we get
	\begin{equation} \label{eq:teor3:1}
	\begin{split}
	    \mathcal{P}_j \epsilon_j \begin{pmatrix}
	    M(\alpha_j) & P(\alpha_j^{-1}) \\ N(\alpha_j) & Q(\alpha_j^{-1})
	    \end{pmatrix} =
	    \tau_j \left( [M(a) + bN(a)] e_{f_j}(a) + [Q(a) + bP(a)] e_{f^*_j}(a) \right) = \\
	 =\begin{pmatrix}
		M(\alpha_j) e_{f_j}(\alpha_j) + Q(\alpha_j) e_{f^*_j}(\alpha_j) &
		N(\alpha_j^{-1}) e_{f_j}(\alpha_j^{-1}) + P(\alpha_j^{-1}) e_{f^*_j}(\alpha_j^{-1}) \\
		 N(\alpha_j) e_{f_j}(\alpha_j) + P(\alpha_j)e_{f^*_j}(\alpha_j) &
		 M(\alpha_j^{-1}) e_{f_j}(\alpha_j^{-1}) + Q(\alpha_j^{-1}) e_{f^*_j}(\alpha_j^{-1})
		\end{pmatrix}
	 =	\begin{pmatrix}
		M(\alpha_j) & P(\alpha_j^{-1}) \\ N(\alpha_j) & Q(\alpha_j^{-1})
		\end{pmatrix}. 
	\end{split}
	\end{equation}
	
	Therefore, $P_j \epsilon_j = \mathtt{id}_{A_j}$ for $1 \leq j \leq r+s$. It follows that
	\[ \mathcal{P}_j \left( \sum_{j=1}^{r+s} \epsilon_j \right) = \bigoplus_{j=1}^{r+s} \mathcal{P}_j \epsilon_j = \bigoplus_{j=1}^{r+s} \mathtt{id}_{A_j}. \]
	Hence $\mathcal{P}^{-1} = \sum_{j=1}^{r+s} \epsilon_j$, and $\epsilon_j$ are  $\mathbb{F}_q$--algebras monomorphisms.
\end{proof}

\textbf{Algebraic description of dihedral codes.} In the case $\gcd(q, 2n) = 1$, the dihedral codes were algebraically described in \cite{VedDeu18}, \cite{VedDeu20}. Below, we generalize this result to the case $\gcd(q, n) = 1$. 

\begin{remark} \label{lem:2} 
	Since $\mathbb{F}_q \langle h \mid h^2 \rangle \simeq \mathcal{R}_n = \frac{\mathbb{F}_q[x]}{(x^2-1)}$, it follows that any proper non--zero ideal of $\mathbb{F}_q \langle h \mid h^2 \rangle$ is generated by $1 + h$ or $1 - h$.  
 	Note that if $2 \nmid q$, then these ideals are also generated by idempotents $(1+h)/2$, $(1-h)/2$.  We also note that $\mathbb{F}_q \langle h \mid h^2 \rangle (1 \pm h) = \{ p \pm ph \mid p \in \mathbb{F}_q \}$.
\end{remark}

Consider (\ref{eq:2}). Let
\[
	F_j := \begin{cases}
		\mathbb{F}_q, & 1 \leq j \leq \zeta(n),  \\
		\mathbb{F}_q[\alpha_j+\alpha_j^{-1}],& \zeta(n)+1 \leq j \leq r,\\
		\mathbb{F}_q[\alpha_j], & r+1 \leq j \leq r+s.
	\end{cases}
\]
\begin{equation} 
	\label{eq:a1}
	I_j(x, y) := \begin{cases}
		\mathbb{F}_q \langle h \mid h^2 \rangle \left[ (x+y) + (x-y)h \right], & 1 \leq j \leq \zeta(n), \\
		\left\{
		\begin{pmatrix}
			ky & -kx \\ ty & -tx
		\end{pmatrix} \:| \; k,t \in F_j 
		\right\}, & \zeta(n)+1 \leq j \leq r+s.
	\end{cases}
\end{equation}
\begin{remark} \label{remark:4}
Note that  $I_j(x, y) = I_j(\mu x, \mu y)$ for any non--zero $\mu \in F_j$. In addition, for $\zeta(n)+1 \leq j \leq r+s$ we have
\begin{equation} \label{eq:Ij}
	I_j(x, y) = A_j \begin{pmatrix}
		y & -x \\ 0 & 0
	\end{pmatrix} = A_j \begin{pmatrix}
		0 & 0 \\ y & -x
	\end{pmatrix}.
\end{equation}
\end{remark}
\begin{theorem} \label{theor:codes}
	Let $\mathrm{gcd}(q, n) = 1$. Consider the decomposition (\ref{eq:2}) of $\mathbb{F}_q D_{n}$. Then for any code $I \subset \mathbb{F}_qD_{n}$ we have
	\begin{equation} \label{eq:form}
		\mathcal{P}(I) = \bigoplus_{j=1}^{r+s} B_j,
		\quad
		B_j = \begin{cases}
			A_j, & j \in J_1 \\
			I_j(0, 1), & j \in J_2 \\
			I_j(1, 0), & j \in J_3 \\
			I_j(-x_j, 1) & j \in J_4 \\			
			0, & j \notin J_1 \cup J_2	\cup J_3 \cup J_4
		\end{cases}
	\end{equation}
	where  $x_j \in F_j$, $x_j \neq 0$, and $J_1, J_2, J_3, J_4$ are pairwise disjoint sets such that
	\begin{equation} \label{eq:15}
		J_1, J_2, J_3 \subset \{ 1, \dots, r+s \}, \quad J_4 \subset \{ \zeta(n) + 1, \dots, r+s \}
	\end{equation}
	and $1 \notin J_2$ if $q$ is even. 
	
	On the other hand, for any $x_j, y_j \in F_j$ and for any $B_j$ such that $B_j = A_j$ or $B_j=I_j(x_j, y_j)$ or $B_j = 0$, the set
	$\mathcal{P}^{-1}(\bigoplus_{j=1}^{r+s} B_j)$
	is a $D_n$--code.
\end{theorem}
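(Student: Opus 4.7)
Since $\mathcal{P}$ is an isomorphism of $\mathbb{F}_q$-algebras by Theorem \ref{theor:decomp}, the map $I \mapsto \mathcal{P}(I)$ is a bijection between the left ideals of $\mathbb{F}_q D_n$ and those of $\bigoplus_{j=1}^{r+s} A_j$. Any left ideal $L$ of a finite direct sum of unital algebras decomposes as $L = \bigoplus_j (L \cap A_j)$ with each $L \cap A_j$ a left ideal of $A_j$, because the units $1_{A_j}$ form an orthogonal set of central idempotents summing to $1$. The problem therefore reduces to listing the left ideals of each component $A_j$ and verifying that they are precisely the sets $B_j$ in the statement; the converse is then immediate, since any such direct sum is a left ideal of $\bigoplus_j A_j$, whose $\mathcal{P}^{-1}$-image is a $D_n$-code.

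For $1 \leq j \leq \zeta(n)$ the summand $A_j = \mathbb{F}_q \langle h \mid h^2 \rangle$ is a two-dimensional commutative algebra whose ideals are enumerated in Remark \ref{lem:2}: besides $\{0\}$ and $A_j$, the only nontrivial ideals are $A_j(1 + h)$ and $A_j(1 - h)$, and a direct substitution into the definition of $I_j$ shows that these equal $I_j(1, 0)$ and $I_j(0, 1)$ respectively. When $q$ is even the two ideals coincide (since $1 + h = 1 - h$); as $\gcd(q, n) = 1$ forces $n$ to be odd in that case, we have $\zeta(n) = 1$, so the collapse occurs only at $j = 1$, and the statement records this by the restriction $1 \notin J_2$.

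For $\zeta(n) + 1 \leq j \leq r + s$ the summand $A_j = \mathrm{M}_2(F_j)$ is a simple algebra over the field $F_j$ whose unique simple left module has $F_j$-dimension $2$; hence every left ideal has $F_j$-dimension $0$, $2$, or $4$, the extremes giving $\{0\}$ and $A_j$ (contributing the cases $j \notin J_1 \cup J_2 \cup J_3 \cup J_4$ and $j \in J_1$). The two-dimensional (minimal) left ideals are in bijection with one-dimensional subspaces of the row space $F_j^2$: by formula (\ref{eq:Ij}) of Remark \ref{remark:4}, $I_j(x, y)$ is the principal left ideal generated by the matrix with first row $(y, -x)$, so $I_j(x, y) = I_j(x', y')$ iff the rows $(y, -x)$ and $(y', -x')$ are $F_j$-proportional. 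Every nonzero $(y, -x) \in F_j^2$ is proportional to exactly one of $(1, 0)$, $(0, 1)$, or $(1, x_j)$ with $x_j \in F_j \setminus \{0\}$, and these three normal forms correspond under the bijection to the ideals $I_j(0, 1)$, $I_j(1, 0)$, and $I_j(-x_j, 1)$; this yields the three parameter sets $J_2, J_3, J_4$ together with a uniquely determined $x_j$ for each $j \in J_4$.

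The main obstacle is establishing the explicit bijection between minimal left ideals of $\mathrm{M}_2(F_j)$ and one-dimensional \emph{row} subspaces of $F_j^2$, and matching this with the parameterization $I_j(x, y)$ so that the three row normal forms align with the three classes $J_2$, $J_3$, $J_4$; once this matrix-theoretic step is in place, the assembly of the four parameter sets and the characteristic two bookkeeping at $j = 1$ are straightforward.
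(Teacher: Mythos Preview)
Your proof is correct and follows essentially the same strategy as the paper: reduce via the isomorphism $\mathcal{P}$ to classifying left ideals in each summand $A_j$, then enumerate these separately for $j \leq \zeta(n)$ (using Remark~\ref{lem:2}) and for $j > \zeta(n)$. The only difference is that for the matrix summands the paper simply cites Theorem~3 of \cite{VedDeu18} together with Remark~\ref{remark:4}, whereas you supply a self-contained argument via the bijection between minimal left ideals of $\mathrm{M}_2(F_j)$ and one-dimensional row subspaces of $F_j^2$, then match the normal forms $(1,0)$, $(0,1)$, $(1,x_j)$ to $I_j(0,1)$, $I_j(1,0)$, $I_j(-x_j,1)$; this is a welcome elaboration but not a different route.
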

\begin{proof}
	Since $\mathbb{F}_qD_n$ and $\bigoplus_{j=1}^{r+s} A_j$ are isomorphic, it follows that there is one--to--one correspondence between the $D_n$--codes and the left ideals of $\bigoplus_{j=1}^{r+s} A_j$. As is well--known, any left ideal of a direct sum of algebras with identity is a direct sum of ideals of the direct summands. So, to prove the theorem, it is sufficient to show that any ideal of $A_j$ is of the form $B_j$.
		
	Indeed, for $1 \leq j \leq \zeta(n)$, Remark \ref{lem:2} implies that any ideal of $A_j$ is of the form $A_j, 0, I_j(0,1)$ or $I_j(1,0)$; and $I_j(0,1)=I_j(1,0)$ if $2 \mid q$. For $\zeta(n)+1 \leq j \leq r+s$, Theorem 3 of \cite{VedDeu18} and Remark \ref{remark:4} imply that any left ideal of $A_j$ is of the form $0, A_j, I_j(0,1), I_j(1,0)$, $I_j(-x_j,1)$, $x_j \in F_j$.
\end{proof}

\begin{corollary}
	Let $\gcd(q,n) = 1$. Lemma \ref{lem:2}, (\ref{eq:Ij}) and Theorem \ref{theor:codes} imply that any left ideal of $\mathbb{F}_qD_{n}$ is also of the form $(\mathbb{F}_qD_{n})i$, $i \in \mathbb{F}_qD_{n}$.
\end{corollary}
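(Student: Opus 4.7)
The plan is to leverage the algebra isomorphism $\mathcal{P}$ together with Theorem \ref{theor:codes} to reduce the claim to a summand-by-summand check: since $\mathcal{P}$ carries principal left ideals of $\mathbb{F}_qD_n$ bijectively onto principal left ideals of $\bigoplus_j A_j$, it suffices to exhibit, for each possible direct summand $B_j$ listed in (\ref{eq:form}), an element $i_j \in A_j$ such that $B_j = A_j i_j$. Once this is done, taking $i := \mathcal{P}^{-1}(i_1, \ldots, i_{r+s})$ produces a generator of the original code, because $\mathcal{P}(\mathbb{F}_qD_n \cdot i) = \bigoplus_j A_j i_j = \bigoplus_j B_j = \mathcal{P}(I)$.

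First I would dispense with the trivial cases: if $B_j = 0$ take $i_j = 0$, and if $B_j = A_j$ take $i_j = 1_{A_j}$. Next, for the indices $\zeta(n)+1 \leq j \leq r+s$, the key input is equation (\ref{eq:Ij}) from Remark \ref{remark:4}, which says precisely that $I_j(x,y) = A_j \left( \begin{smallmatrix} y & -x \\ 0 & 0 \end{smallmatrix} \right)$; hence the matrix $\left(\begin{smallmatrix} y & -x \\ 0 & 0 \end{smallmatrix}\right) \in A_j$ serves as the required generator for each of the ideals $I_j(0,1)$, $I_j(1,0)$, $I_j(-x_j, 1)$. Finally, for $1 \leq j \leq \zeta(n)$, Remark \ref{lem:2} gives that every nonzero proper ideal of $A_j = \mathbb{F}_q\langle h \mid h^2\rangle$ is of the form $A_j(1 \pm h)$, and more generally $I_j(x,y) = A_j \cdot [(x+y) + (x-y)h]$, so this element serves as $i_j$.

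Combining these choices, the explicit generator is $i = \sum_{j=1}^{r+s} \epsilon_j(i_j)$ via the formulas in Theorem \ref{theor:3}, and by (\ref{eq:eps}) the left ideal it generates is mapped by $\mathcal{P}$ componentwise to $A_j i_j = B_j$, so $\mathbb{F}_qD_n \cdot i = I$.

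There is no substantive obstacle here; the corollary is essentially a bookkeeping consequence of results already in hand. The only mildly delicate point is observing that in the $\zeta(n)+1 \leq j \leq r+s$ block the matrix $\left(\begin{smallmatrix} y & -x \\ 0 & 0 \end{smallmatrix}\right)$ really does lie in $A_j$ for the generator choices $(x,y) \in \{(0,1), (1,0), (-x_j, 1)\}$, which is immediate since the relevant entries belong to $F_j$ and $A_j = \mathrm{M}_2(F_j)$; and that one does not need any special treatment when $q$ is even, since in Theorem \ref{theor:codes} the restriction $1 \notin J_2$ only excludes a case that, by Remark \ref{lem:2}, would coincide with the $J_3$ case and is still principal.
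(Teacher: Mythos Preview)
Your proposal is correct and follows exactly the approach the paper indicates: the corollary in the paper is stated without a separate proof, merely citing Remark~\ref{lem:2}, equation~(\ref{eq:Ij}), and Theorem~\ref{theor:codes} as the ingredients, and you have faithfully spelled out how those ingredients combine to yield a single generator $i = \mathcal{P}^{-1}(i_1,\ldots,i_{r+s})$.
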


\section{Dual dihedral codes} \label{sec:dual}
In the classical coding theory for every linear code $C \subset \mathbb{F}_q^n$ the dual code is given by
\[ 
C^{\perp} = \{ x \in \mathbb{F}_q^n \mid \forall c \in C \quad \mathtt{dot}_{\mathbb{F}_q^n}(x, c) = 0 \},
\]
where
\[
\mathtt{dot}_{\mathbb{F}_q^n}: \mathbb{F}_q^n \times \mathbb{F}_q^n \to \mathbb{F}_q, \quad  (x, c) \mapsto \sum_{i=1}^{n} x_i c_i 
\]
is the dot product in $\mathbb{F}_q^n$. In the case when $C = C^{\perp}$ the code $C$ is called \textit{self--dual}. 

Similarly, we can define the dot product in $\mathbb{F}_qG$:
\[
\mathtt{dot}_{\mathbb{F}_qG}: \mathbb{F}_qG \times \mathbb{F}_qG \to \mathbb{F}_q, \quad  \left( \sum_{g \in G} x_g g, \sum_{g \in G} y_g g \right) \mapsto \sum_{g \in G} x_g y_g. 
\]
Let $E_{|G|} = \{ e_1, \dots, e_n \}$ be the standard basis of $\mathbb{F}_q^{|G|}$ and let $\phi: G \to E_{|G|}$ be an arbitrary bijection. The map $\phi$ can be extended to the linear isomorphism $\mathbb{F}_qG \to \mathbb{F}_q^{|G|}$. Clearly, we have
$\mathtt{dot}_{\mathbb{F}_q^{|G|}} (\phi(x), \phi(y)) = \mathtt{dot}_{\mathbb{F}_qG}(x, y)$.

Note that there is a connection between the multiplication in $\mathbb{F}_qG$ and the dot product $\mathtt{dot}_{\mathbb{F}_qG}$. Indeed, let $x, y \in \mathbb{F}_qG$; then $x y^* = \sum_{g \in G} \left( \mathtt{dot}_{\mathbb{F}_qG}(g^{-1} x, y) \right) g$.

Let $C \subset \mathbb{F}_qG$ be a group code; consider its right annihilator
$
\mathtt{rAnn}_{\mathbb{F}_qG}(C) = \{ x \in \mathbb{F}_qG \mid \forall c \in C \quad cx = 0 \}.
$
\begin{theorem}[\cite{CouGonMar12}, Theorem 1.6] \label{theor:1}
	Let $C \subset \mathbb{F}_qG$ be a group code and let $C^{\perp} = \{ x \in \mathbb{F}_qG \mid \forall c \in C \quad \mathtt{dot}_{\mathbb{F}_qG}(c, x) = 0 \}$ be its dual code. Then
	\( C^{\perp} = \left( \mathtt{rAnn}_{\mathbb{F}_qG}(C) \right)^*.  \)
\end{theorem}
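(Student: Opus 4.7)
The plan is to show that the antiautomorphism $*$ sets up a bijection between $C^{\perp}$ and $\mathtt{rAnn}_{\mathbb{F}_qG}(C)$. Since $*$ is an involution (it is clearly of order two on the basis $G$), establishing the set equality $(C^{\perp})^{*} = \mathtt{rAnn}_{\mathbb{F}_qG}(C)$ is equivalent to the claim $C^{\perp} = (\mathtt{rAnn}_{\mathbb{F}_qG}(C))^{*}$.

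The workhorse is the identity $xy^{*} = \sum_{g \in G}\mathtt{dot}_{\mathbb{F}_qG}(g^{-1}x, y)\, g$ recalled just before the statement. Specializing $x = c \in C$ and $y = x \in \mathbb{F}_qG$, I get
\[
c\, x^{*} = \sum_{g \in G} \mathtt{dot}_{\mathbb{F}_qG}(g^{-1}c, x)\, g.
\]
Since $\{g\}_{g\in G}$ is an $\mathbb{F}_q$-basis of $\mathbb{F}_qG$, the equation $cx^{*} = 0$ is equivalent to $\mathtt{dot}_{\mathbb{F}_qG}(g^{-1}c, x) = 0$ for every $g \in G$.

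Next I use the fact that, by the convention fixed in the introduction, the group code $C$ is a \emph{left} ideal of $\mathbb{F}_qG$. Hence $g^{-1}c \in C$ for all $g \in G$ and $c \in C$, and conversely every element of $C$ arises in this way (take $g = e$). Therefore the condition $cx^{*} = 0$ for all $c \in C$ is the same as $\mathtt{dot}_{\mathbb{F}_qG}(c', x) = 0$ for all $c' \in C$, i.e.\ $x \in C^{\perp}$. Equivalently, $x^{*} \in \mathtt{rAnn}_{\mathbb{F}_qG}(C) \iff x \in C^{\perp}$.

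Finally, applying $*$ to the resulting equality $(C^{\perp})^{*} = \mathtt{rAnn}_{\mathbb{F}_qG}(C)$ and using $(S^{*})^{*} = S$ gives the desired identity. The only subtle step is making sure that the left-ideal property is the precise ingredient that lets one collapse the quantifier over $g \in G$ to a single quantifier over $c' \in C$; once this is observed, the rest is formal manipulation with the basis expansion of $cx^{*}$.
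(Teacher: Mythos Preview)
Your argument is correct. Note, however, that the paper does not supply its own proof of this theorem: it is quoted from \cite{CouGonMar12} (Theorem~1.6) and stated without proof. The identity $xy^{*}=\sum_{g\in G}\mathtt{dot}_{\mathbb{F}_qG}(g^{-1}x,y)\,g$ that you invoke is exactly the observation the paper records immediately before the statement, so your proof is the natural one and almost certainly coincides with the standard argument in the cited reference. The one point worth making explicit is that the left-ideal hypothesis is used only in the implication $x\in C^{\perp}\Rightarrow x^{*}\in\mathtt{rAnn}_{\mathbb{F}_qG}(C)$ (to ensure $g^{-1}c\in C$), while the reverse implication follows already by taking $g=e$; you have this right.
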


In this section, for any dihedral code, the image of its dual code under (\ref{eq:2}) is described. Theorem \ref{theor:1} provides a general framework to do this. So, to find the image of a dual code we have to describe the connection between $\mathcal{P}$ and the antiautomorphism $\mathrm{inv}_{\mathbb{F}_qD_{n}}: x \mapsto x^*$ (see (\ref{eq:anti})). We also need to describe the right annihilators of dihedral codes.

For $\zeta(n) + 1 \leq j \leq r+s$ let us define the maps $\mathcal{H}_j: \mathbb{M}_2(\mathbb{F}_q[\alpha_j]) \to \mathbb{M}_2(\mathbb{F}_q[\alpha_j])$ as follows
\begin{equation}
\mathcal{H}_j: \quad
\begin{pmatrix}
M & P \\ N & Q
\end{pmatrix} \mapsto
\begin{pmatrix}
Q & P \\ N & M
\end{pmatrix}.
\end{equation}
Let us consider (\ref{eq:zj}) and the decomposition (\ref{eq:2}) and let
\begin{equation} \label{eq:nu}
\nu := \bigoplus_{j=1}^{r+s} \nu_j, \quad
\nu_j := \begin{cases}
\mathrm{id}_{A_j}, & 1 \leq j \leq \zeta(n) \\
\sigma_j \mathcal{H}_j \sigma_j^{-1}, & \zeta(n) + 1 \leq j \leq r \\
\mathcal{H}_j, & r+1 \leq j \leq r+s
\end{cases}.
\end{equation}
\begin{Lemma} \label{lem:3}
	Let $\gcd(q, n) = 1$. Then the following diagram is commutative.
	\begin{equation} \label{eq:diag}
     \begin{array}{ccc}
		\mathbb{F}_qD_{n} & \xrightarrow{\mathrm{inv}_{\mathbb{F}_qD_{n}}} & \mathbb{F}_qD_{n} \\
		\Bigg \downarrow \mathcal{P} &  & \Bigg \downarrow \mathcal{P}  \\
		\bigoplus_{j=1}^{r+s} A_j & \xrightarrow{\quad \nu \quad} & \bigoplus_{j=1}^{r+s} A_j.
	\end{array}
	\end{equation}
\end{Lemma}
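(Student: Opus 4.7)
The plan is to verify the diagram pointwise by computing both $\mathcal{P}(u^*)$ and $\nu(\mathcal{P}(u))$ for an arbitrary element $u = P(a) + bQ(a) \in \mathbb{F}_qD_n$, and to check equality componentwise on each summand $A_j$.

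\textbf{Step 1: a formula for $u^*$.} From the relation $a^i b = b a^{-i}$ one obtains $(a^i)^{-1} = a^{-i}$ and $(b a^i)^{-1} = a^{-i} b^{-1} = a^{-i} b = b a^i$, so every reflection is an involution. Applying the definition (\ref{eq:anti}) to $u = \sum p_i a^i + \sum q_i \, b a^i$, the coefficient of $h$ in $u^*$ is the coefficient of $h^{-1}$ in $u$, which yields
\[
u^* = P(a^{-1}) + bQ(a).
\]
This is the only nontrivial input needed about the antiautomorphism.

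\textbf{Step 2: componentwise check.} Since $\mathcal{P} = \bigoplus_j \mathcal{P}_j$ and $\nu = \bigoplus_j \nu_j$, commutativity reduces to showing $\mathcal{P}_j(u^*) = \nu_j(\mathcal{P}_j(u))$ for every $j$. This splits into three cases matching the definition of $\mathcal{P}_j$.

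For $1 \leq j \leq \zeta(n)$, the map $\gamma_j$ evaluates $P$ and $Q$ at $\pm 1$, and since $(\pm 1)^{-1} = \pm 1$, $P(a^{-1})$ contributes $P(\pm 1)$ just like $P(a)$ does. Thus $\gamma_j(u^*) = \gamma_j(u)$, which agrees with $\nu_j = \mathrm{id}_{A_j}$.

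For $r+1 \leq j \leq r+s$, direct substitution gives
\[
\tau_j(u^*) = \begin{pmatrix} P(\alpha_j^{-1}) & Q(\alpha_j^{-1}) \\ Q(\alpha_j) & P(\alpha_j) \end{pmatrix},
\]
which is exactly $\mathcal{H}_j\!\left(\tau_j(u)\right)$ by the definition of $\mathcal{H}_j$. Hence $\mathcal{P}_j(u^*) = \mathcal{H}_j(\mathcal{P}_j(u)) = \nu_j(\mathcal{P}_j(u))$.

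For $\zeta(n)+1 \leq j \leq r$, the factor $f_j$ is auto-reciprocal, so $\alpha_j^{-1}$ is another root of $f_j$ lying in $\mathbb{F}_q[\alpha_j]$, and $\tau_j(u^*)$ has the same ``swap'' form as above, i.e.\ $\tau_j(u^*) = \mathcal{H}_j(\tau_j(u))$. Applying $\sigma_j$ on both sides and inserting $\sigma_j^{-1}\sigma_j$ yields
\[
\mathcal{P}_j(u^*) = \sigma_j \tau_j(u^*) = \sigma_j \mathcal{H}_j \sigma_j^{-1}\bigl(\sigma_j \tau_j(u)\bigr) = \nu_j(\mathcal{P}_j(u)).
\]

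\textbf{Main obstacle.} The argument is essentially a bookkeeping exercise; the only delicate point is the middle case, where one must check that the formal swap defining $\mathcal{H}_j$ makes sense and that $\nu_j$ really equals $\sigma_j \mathcal{H}_j \sigma_j^{-1}$ applied to $\mathcal{P}_j(u) = \sigma_j(\tau_j(u))$ rather than to $\tau_j(u)$ itself — i.e.\ the conjugation by $\sigma_j$ must be inserted on the correct side. Once the identity $\tau_j(u^*) = \mathcal{H}_j(\tau_j(u))$ is established for auto-reciprocal factors, commutativity follows immediately from the definition (\ref{eq:nu}) of $\nu_j$.
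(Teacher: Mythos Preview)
Your proof is correct and follows essentially the same approach as the paper: compute $u^* = P(a^{-1}) + bQ(a)$ from the involution property of reflections, then verify $\mathcal{P}_j(u^*) = \nu_j(\mathcal{P}_j(u))$ componentwise via the key identity $\tau_j(u^*) = \mathcal{H}_j(\tau_j(u))$. The only cosmetic difference is that the paper treats the two matrix cases together before splitting, whereas you handle $r+1 \le j \le r+s$ first and then the auto-reciprocal range.
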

\begin{proof}
	Let $u = P(a) + bQ(a)$ be an arbitrary element of $\mathbb{F}_qD_{n}$. Let us consider $\gamma_j$, $\tau_j$ from (\ref{eq:2}) and $\mathrm{inv}_{\mathbb{F}_qD_{n}}$ from (\ref{eq:anti}).  Using (\ref{eq:dih}), we obtain $(ba^{i})^{-1} = (ba^{i})$ and
	\[u^* = P(a^{-1}) + bQ(a). \]
	Let us consider several cases.
	
	Case 1: $1 \leq j \leq \zeta(n)$. Since $\alpha_j = \pm 1$, it follows that  
	\[
	\gamma_j(u) = P(\alpha_j) + Q(\alpha_j) h, \quad \gamma_j(u^*) = P(\alpha_j^{-1}) + Q(\alpha_j) h = P(\alpha_j) + Q(\alpha_j) h = \gamma_j(u).
	\]
	Hence
	$\mathcal{P}_j \mathrm{inv}_{\mathbb{F}_qD_{n}} = \gamma_j \mathrm{inv}_{\mathbb{F}_qD_{n}} = \gamma_j = \nu_j \gamma_j = \nu_j \mathcal{P}_j$
	(see (\ref{eq:2}), (\ref{eq:nu})).
	
	Case 2: $\zeta(n) + 1 \leq j \leq r+s$. We have
	\begin{equation} \label{eq:23}
	\tau_j(u) =  
	\begin{pmatrix} P(\alpha_j) & Q(\alpha_j^{-1}) \\ Q(\alpha_j) & P(\alpha_j^{-1}) \end{pmatrix}, \quad
	\tau_j(u^*) =  
	\begin{pmatrix} P(\alpha_j^{-1}) & Q(\alpha_j^{-1}) \\ Q(\alpha_j) & P(\alpha_j) \end{pmatrix} = \mathcal{H}_j(\tau_j(u)).
	\end{equation}
	Using (\ref{eq:2}), (\ref{eq:nu}), (\ref{eq:23}), we obtain
	\[ \mathcal{P}_j(u^*) = \sigma_j \tau_j (u^*) = \sigma_j \mathcal{H}_j \tau_j(u), \quad
	 \nu_j \mathcal{P}_j(u) = \left( \sigma_j \mathcal H_j \sigma_j^{-1} \right) \left( \sigma_j \tau_j \right) (u) = \sigma_j \mathcal{H}_j \tau_j(u)  \]
	for $\zeta(n) + 1 \leq j \leq r$, and
	\[ \mathcal{P}_j(u^*) = \tau_j(u^*) = \mathcal{H}_j \tau_j(u) = \mathcal{H}_j \mathcal{P}_j(u)  \]
	for $r+1 \leq j \leq r+s$. Hence $\mathcal{P}_j \mathrm{inv}_{\mathbb{F}_qD_{n}} = \nu_j \mathcal{P}_j$.	
	
	So, since $\mathcal{P} = \bigoplus_{j=1}^{r+s}\mathcal{P}_j$, $\nu = \bigoplus_{j=1}^{r+s} \nu_j$, and $P_j \mathrm{inv}_{\mathbb{F}_qD_{n}} = \nu_j P_j$, $1 \leq j \leq r+s$, it follows that diagram (\ref{eq:diag}) is commutative.
\end{proof}

In the following Lemmas, the right annihilators of the left ideals of $A_j$ and $\bigoplus_{j=1}^{r+s}A_j$ are described.

\begin{Lemma} \label{lem:4}
	Consider the algebra $\mathbb{F}_q \langle h \mid h^2 \rangle$. Then
	
	(i) $\mathtt{rAnn}_{\mathbb{F}_q \langle h \mid h^2 \rangle}\left( \mathbb{F}_q \langle h \mid h^2 \rangle (1+h) \right) = (1-h) \mathbb{F}_q \langle h \mid h^2 \rangle = \mathbb{F}_q \langle h \mid h^2 \rangle (1-h)$;
	
	(ii) $\mathtt{rAnn}_{\mathbb{F}_q \langle h \mid h^2 \rangle}\left( \mathbb{F}_q \langle h \mid h^2 \rangle (1-h) \right) = (1+h) \mathbb{F}_q \langle h \mid h^2 \rangle = \mathbb{F}_q \langle h \mid h^2 \rangle (1+h)$;
	
	(iii) $\mathtt{rAnn}_{\mathbb{F}_q \langle h \mid h^2 \rangle}\left( \mathbb{F}_q \langle h \mid h^2 \rangle \right) = 0$;
	
	(iv) $\mathtt{rAnn}_{\mathbb{F}_q \langle h \mid h^2 \rangle}\left( 0 \right) = \mathbb{F}_q \langle h \mid h^2 \rangle$.
\end{Lemma}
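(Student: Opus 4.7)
The plan is to exploit two basic facts: the algebra $A := \mathbb{F}_q\langle h \mid h^2 \rangle$ is commutative (so left and right ideals/annihilators coincide), and the key identity $(1+h)(1-h) = 1 - h^2 = 0$ holds. Combined, these reduce the whole lemma to a short direct computation on the $2$-dimensional space $A = \mathbb{F}_q + \mathbb{F}_q h$.

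For parts \emph{(i)} and \emph{(ii)}, the $\supseteq$ inclusions are immediate from $(1+h)(1-h) = 0$: multiplying the annihilating factor by any element of $A$ keeps the product zero thanks to commutativity. For the reverse inclusion in \emph{(i)}, I would take an arbitrary element $x = \alpha + \beta h \in A$ and expand
\[
(1+h)(\alpha + \beta h) = \alpha + \beta h + \alpha h + \beta h^2 = (\alpha+\beta) + (\alpha+\beta)h.
\]
Setting this to $0$ forces $\alpha + \beta = 0$, i.e.\ $\beta = -\alpha$, so $x = \alpha(1-h) \in A(1-h)$. Part \emph{(ii)} is the mirror computation with $(1-h)(\alpha+\beta h) = (\alpha-\beta)(1-h)$, forcing $\alpha = \beta$ and $x = \alpha(1+h) \in A(1+h)$.

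Parts \emph{(iii)} and \emph{(iv)} are essentially formal. For \emph{(iii)}, if $Ax = 0$ then in particular $1 \cdot x = x = 0$ because $A$ has a unit, so the annihilator is $\{0\}$. Part \emph{(iv)} is the trivial observation that every element of $A$ annihilates $0$.

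No step here is a genuine obstacle: the only mild subtlety is making sure to state commutativity explicitly so the identifications $A(1 \pm h) = (1 \pm h)A$ in the statement are justified without extra work. The whole proof should fit in a few lines.
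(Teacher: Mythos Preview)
Your proof is correct. The paper takes a slightly different, more structural route: it observes that the right annihilator is an ideal, invokes Remark~\ref{lem:2} to list the only four ideals of $\mathbb{F}_q\langle h\mid h^2\rangle$ (namely $0$, the whole algebra, and the two principal ideals generated by $1\pm h$), and then lets the single relation $(1+h)(1-h)=0$ pick out the right one from that finite list. Your approach instead computes directly on a generic element $\alpha+\beta h$ to pin down the annihilator by the linear condition $\alpha\pm\beta=0$. The paper's argument is a touch slicker in that it reuses the ideal classification already established, while yours is entirely self-contained and does not appeal to any earlier result. Both are only a few lines; there is no substantive gap in either.
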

\begin{proof}
	As is well--known, a right annihilator is a right ideal of a ring. Since $\mathbb{F}_q \langle h \mid h^2 \rangle$  is commutative, it follows that its one--sided ideals are two--sided. Lemma \ref{lem:2}  implies that the ideals of $\mathbb{F}_q \langle h \mid h^2 \rangle$ are of the form
	\[ 0, \quad \mathbb{F}_q \langle h \mid h^2 \rangle, \quad \mathbb{F}_q \langle h \mid h^2 \rangle (1+h), \quad \mathbb{F}_q \langle h \mid h^2 \rangle (1-h).  \]
	Therefore, the relation $(1+h)(1-h) = (1 - 1) + (h - h) = 0$ completes the proof.
\end{proof}
\begin{remark}
	If $\mathtt{char}(\mathbb{F}_q) = 2$, then $1-h = 1+h$. Hence the ideal $\mathbb{F}_q \langle h \mid h^2 \rangle(1+h)$ annihilates itself.
\end{remark}

It is easy to see that following two Lemmas are also true. We give them without proof.
\begin{Lemma} \label{lem:5}
	Let $\mathbb{F}$ be a field. Consider the algebra $\mathbb{M}_2(\mathbb{F})$. Let $x, y \in F$, then
	
	(i) $\mathtt{rAnn}_{\mathbb{M}_2(\mathbb{F})} \left(
	\left\{ 
	\begin{pmatrix}
	ky & -kx \\ ty & -tx
	\end{pmatrix} \mid k,t \in F
	\right\}
	\right) =
	\left\{ 
	\begin{pmatrix}
	kx & tx \\ ky & ty
	\end{pmatrix} \mid k,t \in F
	\right\};	
	$
	
	(ii) $\mathtt{rAnn}_{\mathbb{M}_2(\mathbb{F})} (\mathbb{M}_2(\mathbb{F})) = 0$;
	
	(iii) $\mathtt{rAnn}_{\mathbb{M}_2(\mathbb{F})} (0) = \mathbb{M}_2(\mathbb{F})$.
\end{Lemma}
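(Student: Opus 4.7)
The plan is to dispatch parts (iii), (ii), and (i) in order of increasing difficulty. Part (iii) is immediate because every element of $\mathbb{M}_2(\mathbb{F})$ trivially annihilates $0$ on the right. Part (ii) is a one-liner: since $I_2 \in \mathbb{M}_2(\mathbb{F})$, any $Y$ in the right annihilator of the whole algebra must satisfy $I_2 Y = Y = 0$, so the annihilator is $\{0\}$.

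For part (i), I would prove the two inclusions separately by direct $2 \times 2$ matrix algebra. For the inclusion $\supseteq$, I would multiply a generic element of the left set by a generic element of the proposed right set and observe that every entry has the form $(\cdot)\,xy - (\cdot)\,xy$, which cancels identically in the four parameters $k, t, k', t'$. For the reverse inclusion, I would take a candidate $Y = \begin{pmatrix} a & b \\ c & d \end{pmatrix}$ in the right annihilator and compute
\[
\begin{pmatrix} ky & -kx \\ ty & -tx \end{pmatrix} Y = \begin{pmatrix} k(ya - xc) & k(yb - xd) \\ t(ya - xc) & t(yb - xd) \end{pmatrix}.
\]
Demanding that this vanish for all $k, t \in F$ forces the two scalar relations $ya = xc$ and $yb = xd$, i.e.\ each of the columns $\binom{a}{c}$ and $\binom{b}{d}$ is an $F$--scalar multiple of the vector $\binom{x}{y}$. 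Writing these multiples as $k'$ and $t'$ respectively puts $Y$ in the target form $\begin{pmatrix} k'x & t'x \\ k'y & t'y \end{pmatrix}$, which closes the inclusion.

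The only subtle point — and the one step I would verify carefully — is the degenerate case $x = y = 0$, in which the generating set itself collapses to $\{0\}$ and its right annihilator is the whole algebra, whereas the stated right-hand set is also $\{0\}$; the lemma therefore silently assumes $(x, y) \neq (0, 0)$, which is harmless because Theorem \ref{theor:codes} only ever invokes $I_j(x, y)$ for $(x, y) \in \{(0,1),\, (1,0),\, (-x_j, 1)\}$ with $x_j \neq 0$. Apart from this boundary check, the proof is entirely routine linear algebra, which is presumably why the authors chose to omit it.
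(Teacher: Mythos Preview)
Your proof is correct and is precisely the routine computation the authors had in mind: the paper states that ``the following two Lemmas are also true'' and gives Lemma~\ref{lem:5} \emph{without proof}, so there is nothing to compare against. Your observation that part~(i) tacitly requires $(x,y)\neq(0,0)$ is accurate and worth recording, and your justification that Theorem~\ref{theor:codes} only invokes $I_j(x,y)$ with $(x,y)\in\{(0,1),(1,0),(-x_j,1)\}$ is exactly why this gap is harmless in the paper's applications.
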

\begin{Lemma} \label{lem:6}
	Let $R_1, \dots, R_t$ be associative rings with identity, and let
	$$R = R_1 \oplus R_2 \oplus \dots \oplus R_t.$$
	Let $S$ be a left ideal of $R$ and $S = S_1 \oplus \dots S_t$ be its decomposition into left ideals of the direct summands $R_1, \dots, R_t$. Then
	\[ \mathtt{rAnn}_{R}(S) = \mathtt{rAnn}_{R_1}(S_1) \oplus \mathtt{rAnn}_{R_2}(S_2) \oplus \dots \oplus \mathtt{rAnn}_{R_t}(S_t),   \]
	where $\mathtt{rAnn}_{R_i}(S_i) \subset R_i$ are the right annihilators of $S_i \subset R_i$.
\end{Lemma}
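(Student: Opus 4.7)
The plan is to prove the two set-theoretic inclusions separately, exploiting the standard observation that in $R = R_1 \oplus \dots \oplus R_t$ the multiplication is componentwise and, consequently, $R_i \cdot R_j = 0$ whenever $i \neq j$. Writing $1_i$ for the identity of $R_i$ viewed as an element of $R$, the decomposition $S = S_1 \oplus \dots \oplus S_t$ can be described as $S_i = 1_i S$, and by hypothesis each $S_i$ is a left ideal of $R_i$; analogously, any $x \in R$ decomposes uniquely as $x = x_1 + \dots + x_t$ with $x_i = 1_i x \in R_i$.

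For the inclusion $\mathtt{rAnn}_{R_1}(S_1) \oplus \dots \oplus \mathtt{rAnn}_{R_t}(S_t) \subseteq \mathtt{rAnn}_R(S)$, I would take $x = x_1 + \dots + x_t$ with $x_i \in \mathtt{rAnn}_{R_i}(S_i)$ and an arbitrary $s = s_1 + \dots + s_t \in S$ with $s_i \in S_i$. Using $s_i x_j = 0$ for $i \neq j$ together with $s_i x_i = 0$ (by choice of $x_i$), one immediately gets $sx = \sum_{i,j} s_i x_j = 0$, whence $x \in \mathtt{rAnn}_R(S)$.

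For the converse inclusion, I would take $x = x_1 + \dots + x_t \in \mathtt{rAnn}_R(S)$ and, for each fixed index $i$, pick any $s_i \in S_i \subset S$; then $0 = s_i x = s_i x_i$ (the other summands vanish componentwise), which shows $x_i \in \mathtt{rAnn}_{R_i}(S_i)$. Summing over $i$ gives $x \in \bigoplus_i \mathtt{rAnn}_{R_i}(S_i)$.

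There is really no substantial obstacle here; the only point that deserves care is the bookkeeping of the identities $1_i$ (which is why the hypothesis that each $R_i$ has an identity is needed — without it, a left ideal of $R$ need not decompose cleanly into left ideals of the summands). Once this is set up, both inclusions reduce to componentwise computations, so the argument is essentially a direct verification.
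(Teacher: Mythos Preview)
Your argument is correct and is exactly the routine componentwise verification one would expect; the paper in fact omits the proof entirely, stating only that the lemma is ``easy to see.'' Your remark about the role of the identities $1_i$ is apt and makes explicit why the hypothesis that each $R_i$ is unital is used.
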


The following theorem describes the dual codes in terms of decomposition (\ref{eq:2}).
\begin{theorem} \label{theor:dual}
	Let $\mathrm{gcd}(q, n) = 1$, let $I \subset \mathbb{F}_qD_{n}$ be a dihedral code. Consider the decomposition (\ref{eq:2}) of $\mathbb{F}_q D_{n}$  and the image (\ref{eq:form})-- (\ref{eq:15}) of $I$ under this decomposition: 
	\[
	\mathcal{P}(I) = \bigoplus_{j=1}^{r+s} B_j, \quad	
	B_j = \begin{cases}
	A_j, & j \in J_1 \\
	I_j(0, 1), & j \in J_2 \\
	I_j(1, 0), & j \in J_3 \\
	I_j(-x_j, 1) & j \in J_4 \\			
	0, & j \notin J_1 \cup J_2	\cup J_3 \cup J_4
	\end{cases}. \] 
	Let $I^{\perp} \subset \mathbb{F}_qD_{n}$ be the dual code of $I$. Then
	\[ \mathcal P(I^{\perp}) = \bigoplus_{j=1}^{r+s} \hat B_j, \]
	where
	\begin{enumerate}
		\item[1)] for $1 \leq j \leq \zeta(n)$:
		\[ \hat{B}_j = \begin{cases}
		A_j, & \text{if } B_j = 0 \\
		I_j(0, 1), & \text{if } B_j = I_j(1, 0) \\
		I_j(1, 0), & \text{if } B_j = I_j(0, 1) \\
		0, & \text{if } B_j = A_j
		\end{cases} \]	
		
		\item[2)] for $\zeta(n) + 1 \leq j \leq r$:
		\[
		\hat{B}_j = 
		\begin{cases}
		A_j, & \text{if } B_j = 0 \\		
		I_j((\alpha_j+\alpha_j^{-1}), 2), & \text{if } B_j = I_j(1, 0) \\
		I_j\left( 2, (\alpha_j+\alpha_j^{-1}) \right), & \text{if } B_j = I_j(0, 1) \\
		I_j\left( 2 + (\alpha_j + \alpha_j^{-1})x_j, (\alpha_j+\alpha_j^{-1}) + 2x_j \right), & \text{if } B_j = I_j(-x_j, 1) \\
		0, & \text{if } B_j = A_j
		\end{cases}
		\]
		
		\item[3)] for $r+1 \leq j \leq r+s$:
		\[
		\hat{B}_j = \begin{cases}
		A_j, & \text{if } B_j = 0 \\
		I_j(0, 1), & \text{if } B_j = I_j(0, 1)  \\
		I_j(1, 0), & \text{if } B_j = I_j(1, 0)  \\
		I_j(x_j, 1) & \text{if } B_j = I_j(-x_j, 1) \\
		0, & \text{if } B_j = A_j
		\end{cases}
		\]
	\end{enumerate}
\end{theorem}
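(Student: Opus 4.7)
The plan is to combine Theorem \ref{theor:1} with Lemma \ref{lem:3} and then reduce to a case-by-case computation in each Wedderburn component. By Theorem \ref{theor:1}, $I^\perp = (\mathtt{rAnn}_{\mathbb{F}_qD_n}(I))^*$. Since $\mathcal{P}$ is an algebra isomorphism it maps right annihilators to right annihilators, and since the diagram of Lemma \ref{lem:3} commutes, we obtain
\[ \mathcal{P}(I^\perp) = \nu\left( \mathtt{rAnn}_{\bigoplus A_j}(\mathcal{P}(I)) \right). \]
By Lemma \ref{lem:6} the right annihilator splits as $\bigoplus_j \mathtt{rAnn}_{A_j}(B_j)$, and $\nu = \bigoplus_j \nu_j$ respects this decomposition, so the task reduces to computing $\hat B_j = \nu_j(\mathtt{rAnn}_{A_j}(B_j))$ for each $j$ and each shape of $B_j$ listed in Theorem \ref{theor:codes}.

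For $1 \leq j \leq \zeta(n)$ we have $\nu_j = \mathrm{id}_{A_j}$, and the first block of the table follows directly from Lemma \ref{lem:4} after noting $I_j(1,0) = A_j(1+h)$ and $I_j(0,1) = A_j(1-h)$. For $r+1 \leq j \leq r+s$ the annihilator $\mathtt{rAnn}(I_j(x,y)) = \left\{ \left(\begin{smallmatrix} kx & tx \\ ky & ty \end{smallmatrix}\right) : k,t \in F_j\right\}$ furnished by Lemma \ref{lem:5}(i) is sent by $\nu_j = \mathcal{H}_j$ to $\left\{ \left(\begin{smallmatrix} ty & tx \\ ky & kx \end{smallmatrix}\right)\right\}$, which after relabelling the parameters equals $I_j(-x, y) = I_j(x, -y)$ (using Remark \ref{remark:4}); specialising $(x,y)$ to the three nontrivial values $(1,0),(0,1),(-x_j,1)$ reproduces the third block.

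The main work is case 2 ($\zeta(n)+1 \leq j \leq r$), where $\nu_j = \sigma_j \mathcal{H}_j \sigma_j^{-1}$ involves conjugation by the matrix $Z_j$ from (\ref{eq:zj}). Writing $u = x - \alpha_j y$ and $v = x - \alpha_j^{-1} y$, a direct computation of $Z_j^{-1} \mathcal{H}_j(Z_j X Z_j^{-1}) Z_j$ on the generic annihilator element $X = \left(\begin{smallmatrix} kx & tx \\ ky & ty \end{smallmatrix}\right)$, together with the identities $u + v = 2x - (\alpha_j + \alpha_j^{-1}) y$ and $\alpha_j v + \alpha_j^{-1} u = (\alpha_j + \alpha_j^{-1}) x - 2y$, shows that $\nu_j(X)$ equals $(\alpha_j - \alpha_j^{-1})^{-2} \left(\begin{smallmatrix} k' y^* & -k' x^* \\ t' y^* & -t' x^* \end{smallmatrix}\right)$ with $x^* = (\alpha_j + \alpha_j^{-1}) x - 2y$, $y^* = 2x - (\alpha_j + \alpha_j^{-1}) y$, and $(k', t')$ the image of $(k,t)$ under the linear map with matrix $\left(\begin{smallmatrix} -2 & -(\alpha_j + \alpha_j^{-1}) \\ -(\alpha_j + \alpha_j^{-1}) & -2 \end{smallmatrix}\right)$. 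Dropping the scalar via Remark \ref{remark:4} yields $\hat B_j = I_j\left((\alpha_j + \alpha_j^{-1}) x - 2y,\, 2x - (\alpha_j + \alpha_j^{-1}) y\right)$, and substituting $(1,0),(0,1),(-x_j,1)$ for $(x,y)$, with an additional $-1$ rescaling in the latter two, reproduces the second block.

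The principal obstacle is justifying surjectivity of the parametrisation $(k,t) \mapsto (k', t')$, which is needed to conclude $\nu_j(\mathtt{rAnn}(I_j(x,y))) = I_j(x^*, y^*)$ rather than merely a proper submodule of it. This amounts to showing that $4 - (\alpha_j + \alpha_j^{-1})^2 = -(\alpha_j - \alpha_j^{-1})^2$ is nonzero in $F_j$, which in turn follows from $\alpha_j^2 \neq 1$ for $j \geq \zeta(n) + 1$: the roots $\pm 1$ of $x^n - 1$ are absorbed into $f_1 = x - 1$ and, when $n$ is even, $f_2 = x + 1$, so $\alpha_j \ne \pm 1$ for all larger $j$. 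Once this non-vanishing is in hand, the three cases combine to give the claimed formula for $\mathcal{P}(I^\perp)$.
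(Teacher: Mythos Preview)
Your proof is correct and follows the same overall strategy as the paper: apply Theorem~\ref{theor:1} and Lemma~\ref{lem:3} to obtain $\mathcal{P}(I^\perp)=\nu\bigl(\mathtt{rAnn}_{\bigoplus A_j}(\mathcal{P}(I))\bigr)$, split componentwise via Lemma~\ref{lem:6}, and then compute $\nu_j(\mathtt{rAnn}_{A_j}(B_j))$ case by case using Lemmas~\ref{lem:4} and~\ref{lem:5}. Cases~1 and~3 are handled identically to the paper.

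The only procedural difference lies in case~2. The paper writes the right annihilator as a \emph{principal} right ideal, $\mathtt{rAnn}_{A_j}(I_j(-x_j,1))=\left(\begin{smallmatrix}0&-x_j\\0&1\end{smallmatrix}\right)A_j$, and then applies $\nu_j$ to this single generator; since $\nu_j=\mathcal{P}_j\circ\mathrm{inv}_{\mathbb{F}_qD_n}\circ\mathcal{P}_j^{-1}$ is an anti-automorphism of $A_j$, the image is automatically the left ideal generated by the image of that element, so no surjectivity check is needed. You instead apply $\nu_j$ to a generic element of the annihilator and then must verify that the induced map $(k,t)\mapsto(k',t')$ is onto $F_j^2$, which you do correctly via the determinant $4-(\alpha_j+\alpha_j^{-1})^2=-(\alpha_j-\alpha_j^{-1})^2\neq0$ (using $\alpha_j\neq\pm1$ for $j\geq\zeta(n)+1$). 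Your route is slightly more laborious but more explicit; the paper's is shorter but leans implicitly on the anti-automorphism property of $\nu_j$. Both lead to the same answer.
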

\begin{proof}
	Let us consider the algebra $\Delta := \bigoplus_{j=1}^{r+s} A_j$. Using Theorem \ref{theor:1} and Lemma \ref{lem:3}, we obtain	
	\[ 	\mathcal{P}(I^{\perp}) = 
	\mathcal{P} \left( \left( \mathtt{rAnn}_{\mathbb{F}_qD_{n}}(I) \right)^{*} \right) =
	\nu \mathcal{P} \left( \mathtt{rAnn}_{\mathbb{F}_qD_{n}}(I)  \right) =
	\nu \left( \mathtt{rAnn}_{\Delta} (\mathcal{P}(I)) \right).
	\]	
	Using Lemma \ref{lem:6}, we get
	\[ 	\mathtt{rAnn}_{\Delta} (\mathcal{P}(I)) = 
		\mathtt{rAnn}_{\Delta} \left( \bigoplus_{j=1}^{r+s} B_j \right) =
		\bigoplus_{j=1}^{r+s} \mathtt{rAnn}_{A_j} (B_j),
	\]
	So, using the definition of $\nu$,  we conclude that
	\[ \mathcal{P}(I^{\perp}) = 
		\nu \left( \bigoplus_{j=1}^{r+s} \mathtt{rAnn}_{A_j} (B_j) \right) =
		\bigoplus_{j=1}^{r+s} \nu_j \left( \mathtt{rAnn}_{A_j} (B_j) \right).
	\]
	Thus, to compute $\mathcal{P}(I^{\perp})$ we need to find $\nu_j \left( \mathtt{rAnn}_{A_j} (B_j) \right)$, $1 \leq j \leq r+s$. Let us consider several cases.
		
	Case 1): $1 \leq j \leq \zeta(n)$. Lemma \ref{lem:4} and (\ref{eq:a1}) imply
	\[ 
		\hat B_j = \nu_j \left( \mathtt{rAnn}_{A_j} (B_j) \right) = 
		\mathrm{id}_{A_j} \left( \mathtt{rAnn}_{A_j} (B_j) \right) =
		\mathtt{rAnn}_{A_j} (B_j) =
		\begin{cases}
		A_j, & \text{if } B_j = 0 \\
		I_j(0, 1), & \text{if } B_j = I_j(1, 0) \\
		I_j(1, 0), & \text{if } B_j = I_j(0, 1) \\
		0, & \text{if } B_j = A_j
		\end{cases}.
	\]
	Claim 1) of the theorem is proved.
	
	Case 2): $\zeta(n) + 1 \leq j \leq r$. Recall that $A_j = \mathbb{M}_2(\mathbb{F}_q[\alpha_j + \alpha_j^{-1}])$ and $\nu_j = \sigma_j \mathcal{H}_j \sigma_j^{-1}$ in this case. Below, we use Lemma \ref{lem:5}, which describes the right annihilator of the left ideals of $A_j$. If $B_j = A_j$ we clearly have
	\[ \hat B_j = \nu_j \left( \mathtt{rAnn}_{A_j} (A_j) \right) = \nu_j (0) = 0. \]
	If $B_j = 0$, then 
	\[
	\hat B_j = \nu_j \left( \mathtt{rAnn}_{A_j} (0) \right) = \nu_j (A_j) = A_j.  
	\]	
	Now let $B_j = I_j(-x_j, 1)$. Obviously, (\ref{eq:Ij}) implies
	\[ I_j(-x_j, 1) = 
		A_j \begin{pmatrix}
		1 & x_j \\ 0 & 0
		\end{pmatrix}, 
		\quad
		\mathtt{rAnn}_{A_j} \left( I_j(-x_j, 1) \right) =
		\begin{pmatrix}
		0 & -x_j \\ 0 & 1
		\end{pmatrix} A_j.
	\]
	It follows that
	\[ 
		\hat B_j = \nu_j \left( \mathtt{rAnn}_{A_j} \left( I_j(-x_j, 1) \right)  \right) =
		\sigma_j \mathcal{H}_j \sigma_j^{-1} \left( 
		\begin{pmatrix}
		0 & -x_j \\ 0 & 1
		\end{pmatrix} A_j 
		\right).
	\]
	By direct calculation we obtain
	\[ 
		\hat B_j = \nu_j \left( \mathtt{rAnn}_{A_j} \left( I_j(-x_j, 1) \right)  \right) =   
		A_j \begin{pmatrix}
		-2 x_j - (\alpha_j + \alpha_j^{-1}) & (\alpha_j + \alpha_j^{-1}) x_j + 2 \\ 0 & 0
		\end{pmatrix}.
	\]
	So, using (\ref{eq:Ij}), we see that
	\[ \hat B_j = \nu_j \left( \mathtt{rAnn}_{A_j} \left( I_j(-x_j, 1) \right) \right) = I_j \left( 2 + (\alpha_j + \alpha_j^{-1})x_j, (\alpha_j+\alpha_j^{-1}) + 2x_j \right).
	\]
	In particular, if $B_j = I_j(0, 1)$, then
	$ \hat B_j = \nu_j \left( \mathtt{rAnn}_{A_j} \left( I_j(0, 1) \right) \right) = I_j \left( 2, (\alpha_j+\alpha_j^{-1}) \right)$.	Similarly, if $B_j = I_j(1, 0)$, we have
	\[ I_j(1, 0) = 
	A_j \begin{pmatrix}
	0 & 0 \\ 0 & 1
	\end{pmatrix}, 
	\quad
	\mathtt{rAnn}_{A_j} \left( I_j(1, 0) \right) =
	\begin{pmatrix}
	1 & 0 \\ 0 & 0
	\end{pmatrix} A_j.
	\]
	Hence
	\[ 
	\hat B_j = \nu_j \left( \mathtt{rAnn}_{A_j} \left( I_j(1, 0) \right)  \right) =
	\sigma_j \mathcal{H}_j \sigma_j^{-1} \left( 
	\begin{pmatrix}
	1 & 0 \\ 0 & 0
	\end{pmatrix} A_j 
	\right) =
	A_j \begin{pmatrix}
	-2 & (\alpha_j+\alpha_j^{-1}) \\ 0 & 0
	\end{pmatrix} 	
	 = I_j((\alpha_j+\alpha_j^{-1}), 2).
	\]
	Claim 2) of the theorem is proved.
	
	Case 3): $r+1 \leq j \leq r+s$. Using Lemma \ref{lem:5}, (\ref{eq:nu}) and (\ref{eq:a1}), for any $x,y \in F_j$ we obtain
	\[ \nu_j \left( \mathtt{rAnn}_{A_j} \left( I_j(x, y) \right) \right) = 
	\mathcal{H}_j \left(
	\left\{
	\begin{pmatrix}
	kx & tx \\ ky & ty 
	\end{pmatrix} \mid k, t \in F_j
	\right\} \right) = 
	\left\{
	\begin{pmatrix}
	ty & tx \\ ky & kx 
	\end{pmatrix} \mid k, t \in F_j
	\right\} = I_j(-x, y).
	\]
	Futher, Lemma \ref{lem:5} implies
	\[  
	 \nu_j \left( \mathtt{rAnn}_{A_j} (A_j) \right) = \nu_j (0) = 0, \quad
	 \nu_j \left( \mathtt{rAnn}_{A_j} (0) \right) = \nu_j (A_j) = A_j.  
	\]
	Claim 3) of the theorem is proved.
\end{proof}
\begin{corollary} \label{col:2}
	Let $2 \mid q$. Then a code $I \subset \mathbb{F}_qD_{n}$ is self--dual if and only if $B_j \neq 0$, $B_j \neq A_j$ for all $1 \leq j \leq r+s$.
\end{corollary}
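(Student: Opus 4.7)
The plan is to translate self-duality through the isomorphism $\mathcal{P}$ and then do a case-by-case check using Theorem \ref{theor:dual}. Since $\mathcal{P}$ is an $\mathbb{F}_q$--algebra isomorphism, $I = I^{\perp}$ if and only if $\mathcal{P}(I) = \mathcal{P}(I^{\perp})$, i.e. $B_j = \hat{B}_j$ for every $j \in \{1,\dots,r+s\}$. Necessity is immediate: Theorem \ref{theor:dual} shows that $B_j = 0$ forces $\hat{B}_j = A_j$ and $B_j = A_j$ forces $\hat{B}_j = 0$, so in either extreme $B_j \ne \hat{B}_j$.

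For sufficiency I would use two preparatory observations specific to characteristic two. First, since $\gcd(q,n)=1$ and $2 \mid q$, the integer $n$ is odd, so $\zeta(n)=1$ and the index range in case 2) of Theorem \ref{theor:dual} is $2 \leq j \leq r$. Second, for such $j$ one has $\alpha_j + \alpha_j^{-1} \neq 0$: if it vanished, then $\alpha_j^2 = -1 = 1$ in $\mathbb{F}_q[\alpha_j]$, forcing $\alpha_j = 1$ and hence $f_j = x-1 = f_1$, contradicting $j \geq 2$. Thus $\alpha_j + \alpha_j^{-1}$ is invertible in $F_j$, and by Remark \ref{remark:4} we may rescale the arguments of $I_j(\cdot,\cdot)$ by it.

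It remains to verify $\hat{B}_j = B_j$ for each of the three admissible nonzero proper forms. In case 1) ($j=1$), Theorem \ref{theor:codes} forces $B_1 = I_1(1,0)$ (since $1 \notin J_2$ when $q$ is even), and Theorem \ref{theor:dual} gives $\hat{B}_1 = I_1(0,1)$, which coincides with $I_1(1,0)$ because $2 \mid q$ makes $1+h = 1-h$ (see Remark \ref{lem:2}). In case 2), using $2=0$ and rescaling by $(\alpha_j+\alpha_j^{-1})^{-1}$, one computes
\[
I_j(\alpha_j+\alpha_j^{-1},2) = I_j(1,0), \quad I_j(2,\alpha_j+\alpha_j^{-1}) = I_j(0,1),
\]
\[
I_j\bigl(2 + (\alpha_j+\alpha_j^{-1})x_j,\,(\alpha_j+\alpha_j^{-1})+2x_j\bigr) = I_j(x_j,1) = I_j(-x_j,1),
\]
where the last equality uses $-x_j = x_j$ in characteristic two. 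In case 3), $\hat{B}_j$ equals $B_j$ by inspection except in the parametric subcase, where $\hat{B}_j = I_j(x_j,1) = I_j(-x_j,1) = B_j$, again by $-x_j = x_j$.

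The only mildly delicate step is verifying non-vanishing of $\alpha_j + \alpha_j^{-1}$ in case 2), since the formulas in Theorem \ref{theor:dual} only collapse to $B_j$ after rescaling by this quantity; everything else is a direct substitution of $2=0$ and $-1=1$ into the table of Theorem \ref{theor:dual}.
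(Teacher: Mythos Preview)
Your proof is correct and is precisely the derivation the paper leaves implicit: the corollary is stated without proof as an immediate consequence of Theorem~\ref{theor:dual}, and your case-by-case verification (including the observation that $n$ must be odd and that $\alpha_j+\alpha_j^{-1}\neq 0$ for $2\le j\le r$ in characteristic two) is exactly how one unpacks that implication.
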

\begin{corollary}
	Let $2 \nmid q$. Then $I \neq I^{\perp}$ for any code $I \subset \mathbb{F}_qD_{n}$.
\end{corollary}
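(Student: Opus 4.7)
The plan is to reduce the equality $I = I^{\perp}$ to a componentwise condition under the Wedderburn decomposition $\mathcal{P}$, and then rule out equality at the single component $j = 1$.

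First I would note that $\mathcal{P}$ is an algebra isomorphism, so $I = I^{\perp}$ is equivalent to $\mathcal{P}(I) = \mathcal{P}(I^{\perp})$, which by the direct-sum description of Theorem \ref{theor:codes} amounts to $B_j = \hat{B}_j$ for every $1 \le j \le r+s$. Hence it suffices to exhibit a single index $j$ for which equality must fail no matter which admissible ideal $B_j$ is chosen.

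The natural candidate is $j = 1$, which is always present because $f_1 = x-1$ is always among the auto-reciprocal factors in (\ref{eq:dec}). Here $A_1 = \mathbb{F}_q\langle h \mid h^2 \rangle$, and case 1) of Theorem \ref{theor:dual} enumerates the possibilities as four pairs $(B_1, \hat{B}_1)$: namely $(0, A_1)$, $(A_1, 0)$, $(I_1(1,0), I_1(0,1))$, and $(I_1(0,1), I_1(1,0))$. The first two pairs are trivially unequal since $A_1 \ne 0$, so everything hinges on the nontrivial pair.

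The only step where the hypothesis $2 \nmid q$ actually enters is in verifying $I_1(1,0) \ne I_1(0,1)$. Unwinding (\ref{eq:a1}), these are the principal ideals $A_1(1+h)$ and $A_1(1-h)$ respectively. In odd characteristic Remark \ref{lem:2} provides the generating idempotents $(1+h)/2$ and $(1-h)/2$, and they are plainly distinct since their difference is $h \ne 0$. Therefore $B_1 \ne \hat{B}_1$ in every case, so $I \ne I^{\perp}$. I expect no serious obstacle: once the componentwise reformulation is made, the argument at $j=1$ is essentially a one-line check of the four entries in case 1) of Theorem \ref{theor:dual}.
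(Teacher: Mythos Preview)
Your argument is correct and is precisely the intended one: the paper states this corollary without proof, as it follows immediately from case~1) of Theorem~\ref{theor:dual} by inspecting the four possibilities for $B_1$ at the always-present index $j=1$. The only step you leave implicit---that distinct idempotents $(1\pm h)/2$ generate distinct ideals---is standard in a commutative ring (if $Re=Rf$ for idempotents $e,f$ then $e=ef=f$), so there is no gap.
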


\section{Bases and generating matrices of dihedral codes} \label{sec:matr}
The codes in $\mathbb{F}_qD_{n}$ were described in Theorem \ref{theor:codes}. In this section, $\mathbb{F}_q$--bases, generating and check matrices of these codes are constructed. Below, we keep using the definitions and the notation from the previous sections.

Let $E_{2n} = \{ \textbf{e}_1, \dots, \textbf{e}_{2n} \}$ be a standard basis of $\mathbb{F}_q^{2n}$. Let $\phi: D_{n} \to E_{2n}$ be a bijective map, given by
\begin{equation} \label{eq:phi}
\phi(a^i) = \textbf{e}_{i+1}, \quad \phi(b a^i) = \textbf{e}_{n + i + 1} \quad (i = 0, \dots, n-1). 
\end{equation}
The map $\phi$ can naturally be extended to the linear isomorphism $\phi: \mathbb{F}_q D_{n} \to \mathbb{F}_q^{2n}$. Let $I \subset \mathbb{F}_qD_{n}$ be a dihedral code, and let $S = \{ s_1, \dots, s_k \}$ be a basis of it. The matrix
\[ \textbf{G}_I = \begin{pmatrix}
\rule[.5ex]{2.5ex}{0.5pt} & \phi(s_1) & \rule[.5ex]{2.5ex}{0.5pt} \\
\rule[.5ex]{2.5ex}{0.5pt} & \phi(s_2) & \rule[.5ex]{2.5ex}{0.5pt} \\
& \vdots & \\
\rule[.5ex]{2.5ex}{0.5pt} & \phi(s_k) & \rule[.5ex]{2.5ex}{0.5pt} \\
\end{pmatrix} \]
is called a generating matrix of $I$. 

By $\langle a \rangle_{D_{n}}$ we denote the cyclic subgroup of order $n$ generated by $a \in D_n$. It is also convinient to consider linear isomorphism $\psi: \mathbb{F}_q \langle a \rangle_{D_{n}} \to \mathbb{F}_q^n$, given by
\begin{equation} \label{eq:psi}
 \psi(a^i) = \textbf{e}_{i + 1} \quad (
\textbf{e}_i \in E_n, \; i = 0, \dots, n-1).
\end{equation}
Clearly, for an element $u = P(a) + bQ(a) \in \mathbb{F}_qD_{n}$ we have
\begin{equation} \label{eq:phipsi}
\phi(u) = \begin{pmatrix}
\psi(P(a)), & \psi(Q(a))
\end{pmatrix}, \quad
\phi(au) = \begin{pmatrix}
\psi(aP(a)), & \psi(a^{-1} Q(a))
\end{pmatrix}, \quad
\phi(bu) = \begin{pmatrix}
\psi(Q(a)), & \psi(P(a))
\end{pmatrix}.
\end{equation}
Let $\mathtt{rshift}$ be the operator of right cyclic shift on $\mathbb{F}_q^n$. Note that
\[  \phi(au) = \begin{pmatrix}
\mathtt{rshift} \left( \psi(P(a)) \right), & \mathtt{rshift}^{-1} \left( \psi(Q(a)) \right) \end{pmatrix}.  \]

Consider the isomorphism $\mathcal{P}$ from Theorem \ref{theor:decomp} and the maps $\epsilon_j$ from Theorem \ref{theor:3}. Let $S$ be a subset of $A_j$; by $\overline{ S_j}$ we denote
$\overline{ S_j}  = \epsilon_j(S)$.
Recall that $\mathcal P_j \epsilon_j = \mathrm{id}_{A_j}$ and $\mathcal P_j \epsilon_i = 0$ for $i \neq j$ (see (\ref{eq:eps})). Note that
\[ \overline{S_j} = \mathcal P^{-1} (0 \oplus \dots \oplus 0 \oplus S \oplus 0 \oplus \dots \oplus 0). \]

\begin{Lemma} \label{lem:7}
	Let $\gcd(q, n) = 1$ and let $1 \leq j \leq \zeta(n)$. Then
	\begin{enumerate}
		\item[1)] $\mathcal{B}(A_j) := \{ e_{f_j}(a),  be_{f_j}(a) \}$ is a $\mathbb{F}_q$--basis of $\overline{A}_j$;
		\item[2)] $\mathcal{B}(I_j(1, 0) ) := \{ e_{f_j}(a) + b e_{f_j}(a) \}$ is a $\mathbb{F}_q$--basis of $\overline{ I_j(1, 0) }$;
		\item[3)] $\mathcal{B}(I_j(0, 1)) := \{ e_{f_j}(a) - b e_{f_j}(a) \}$ is a $\mathbb{F}_q$--basis of $\overline{ I_j(0, 1) }$.
	\end{enumerate}
\end{Lemma}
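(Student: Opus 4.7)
The plan is to exploit the fact, recorded in Theorem~\ref{theor:3}, that each $\epsilon_j$ is an $\mathbb{F}_q$-algebra monomorphism; in particular, $\epsilon_j$ is an injective $\mathbb{F}_q$-linear map, so the image under $\epsilon_j$ of any $\mathbb{F}_q$-basis of a subspace $S \subseteq A_j$ is an $\mathbb{F}_q$-basis of $\epsilon_j(S) = \overline{S_j}$. Thus the proof reduces to (i) writing down a natural $\mathbb{F}_q$-basis of each of the spaces $A_j$, $I_j(1,0)$, $I_j(0,1)$ for $1 \le j \le \zeta(n)$, and (ii) computing its image under $\epsilon_j$ using the explicit formula $\epsilon_j(w_1 + w_2 h) = (w_1 + w_2 b)\,e_{f_j}(a)$ from part~1) of Theorem~\ref{theor:3}.

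For claim~1), the algebra $A_j = \mathbb{F}_q\langle h\mid h^2\rangle$ has the obvious $\mathbb{F}_q$-basis $\{1, h\}$, whose images under $\epsilon_j$ are $e_{f_j}(a)$ and $b\,e_{f_j}(a)$, giving exactly $\mathcal{B}(A_j)$. For claims~2) and~3), I would invoke Remark~\ref{lem:2} and the definition (\ref{eq:a1}): specifically, for $(x,y) = (1,0)$ we have $(x+y) + (x-y)h = 1+h$, so $I_j(1,0) = \mathbb{F}_q\langle h\mid h^2\rangle(1+h) = \{p + ph \mid p \in \mathbb{F}_q\}$, which is an $\mathbb{F}_q$-line with basis $\{1+h\}$; applying $\epsilon_j$ produces the single nonzero vector $e_{f_j}(a) + b\,e_{f_j}(a)$. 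The argument for $I_j(0,1) = \mathbb{F}_q\langle h\mid h^2\rangle(1-h)$ is identical, yielding $e_{f_j}(a) - b\,e_{f_j}(a)$.

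There is no real obstacle here: the only thing to verify beyond the formal application of $\epsilon_j$ is that each one-element image set in parts~2) and~3) is nonzero, which is immediate from the injectivity of $\epsilon_j$ together with $1 \pm h \neq 0$ in $A_j$. Note that in characteristic~$2$ the two bases in 2) and 3) collapse to the same element, consistently with $I_j(1,0) = I_j(0,1)$ in that case, so no separate treatment is needed.
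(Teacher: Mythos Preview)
Your proposal is correct and follows essentially the same approach as the paper: identify the natural $\mathbb{F}_q$-bases $\{1,h\}$, $\{1+h\}$, $\{1-h\}$ of $A_j$, $I_j(1,0)$, $I_j(0,1)$ respectively, and push them through the monomorphism $\epsilon_j$ from Theorem~\ref{theor:3}. Your explicit appeal to injectivity of $\epsilon_j$ to justify that bases map to bases is exactly what the paper is doing when it writes ``Using Theorem~\ref{theor:3}, we obtain \ldots''.
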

\begin{proof}
	If $1 \leq j \leq \zeta(n)$, then $A_j$ is of the form $\mathbb{F}_q \langle h \mid h^2 \rangle$. Since the set $\left\{ 1, h \right\}$ is a basis of $A_j$, using Theorem \ref{theor:3}, we obtain
	\[ 		 
		 \epsilon_j( \{1, h\} )  =
		\{ e_{f_j}(a),  be_{f_j}(a) \} = 
		\mathcal B( A_j ) .
	\]
	Hence $\mathcal B( A_j )$ is a basis of $\overline{ A_j }$.  Claim 1) is proved.
	
	By definition of $I_j(x, y)$ we have
	\[
	I_j(1,0) = (1 + h) \mathbb{F}_q \langle h \mid h^2 \rangle = 
	\{ P + P h \mid P \in \mathbb{F}_q \} =
	\{ P (1+h) \mid P \in \mathbb{F}_q \},
	\]
	\[
	I_j(0,1) = (1 - h) \mathbb{F}_q \langle h \mid h^2 \rangle = 
	\{ P - P h \mid P \in \mathbb{F}_q \} =
	\{ P (1-h) \mid P \in \mathbb{F}_q \}.
	\]
	It follows that $\{ 1+h \}$ is a basis of $I_j(1, 0)$, and $\{ 1-h \}$ is a basis of $I_j(0, 1)$. Using Theorem \ref{theor:3}, we obtain
	\[ 
		\epsilon_j(\{1 + h\}) =
		\{ e_{f_j}(a) + be_{f_j}(a) \} = 
		\mathcal B( I_j(1, 0) ), 
	\]
	\[ 
		\epsilon_j(\{1 - h\}) =
		\{ e_{f_j}(a) - be_{f_j}(a) \} = 
		\mathcal B( I_j(0, 1) ),
	\]
	So, claims 2) and 3) are proved.
\end{proof}

	Note that the sets $\overline{A_j}$, $\overline{I_j(1, 0)}$, $\overline{I_j(0, 1)}$ are dihedral codes as well. Using formulas (\ref{eq:phi}), (\ref{eq:psi}), (\ref{eq:phipsi})), we obtain the following corollary.
\begin{corollary}	\label{col:3}
Let $1 \leq j \leq \zeta(n)$. Then
	\[ \textbf{G}_{\overline{A_j}} 
		= \begin{pmatrix}
		\phi(e_{f_j}(a))  \\  \phi(be_{f_j}(a))
		\end{pmatrix} =
		\begin{pmatrix}
		\psi(e_{f_j}(a)) & 0 \\ 
		0 & \psi(e_{f_j}(a))
		\end{pmatrix};
	\]
	\[ 
		\textbf{G}_{\overline{I_j(1, 0)}} = 
		\begin{pmatrix}
		\psi(e_{f_j}(a)), & \psi(e_{f_j}(a))
		\end{pmatrix} = 
		\begin{pmatrix}
		1, & 1
		\end{pmatrix} \textbf{G}_{\overline{A_j}};
	\]
	\[ 
		\textbf{G}_{\overline{I_j(0, 1)}} = 
		\begin{pmatrix}
		\psi(e_{f_j}(a)), & -\psi(e_{f_j}(a))
		\end{pmatrix} = 
		\begin{pmatrix}
		1, & -1
		\end{pmatrix} \textbf{G}_{\overline{A_j}}.
	\]
\end{corollary}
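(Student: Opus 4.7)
The plan is to derive each matrix directly by applying $\phi$ to the explicit bases supplied by Lemma \ref{lem:7} and assembling rows, then to read off the two auxiliary identities as trivial matrix products.

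First I would invoke Lemma \ref{lem:7}(1) to take $\mathcal{B}(A_j) = \{e_{f_j}(a), be_{f_j}(a)\}$ as an $\mathbb{F}_q$--basis of $\overline{A_j}$. Writing $e_{f_j}(a)$ in the form $P(a) + bQ(a)$ with $P(a) = e_{f_j}(a)$ and $Q(a) = 0$, formula (\ref{eq:phipsi}) immediately gives $\phi(e_{f_j}(a)) = (\psi(e_{f_j}(a)), 0)$; applying the third identity of (\ref{eq:phipsi}) to the same $u = e_{f_j}(a)$ yields $\phi(be_{f_j}(a)) = (0, \psi(e_{f_j}(a)))$. Stacking these two row vectors according to the definition of a generating matrix gives exactly $\textbf{G}_{\overline{A_j}}$ as claimed.

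Next I would handle $\overline{I_j(1,0)}$ and $\overline{I_j(0,1)}$ in parallel. Lemma \ref{lem:7}(2)--(3) supplies the single basis vector $e_{f_j}(a) \pm b e_{f_j}(a)$, which corresponds to $P(a) = e_{f_j}(a)$ and $Q(a) = \pm e_{f_j}(a)$. Then (\ref{eq:phipsi}) (using linearity of $\psi$) produces $\phi(e_{f_j}(a) \pm b e_{f_j}(a)) = (\psi(e_{f_j}(a)), \pm \psi(e_{f_j}(a)))$, giving the stated forms of $\textbf{G}_{\overline{I_j(1,0)}}$ and $\textbf{G}_{\overline{I_j(0,1)}}$.

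Finally, the factorizations $\textbf{G}_{\overline{I_j(1,0)}} = (1,\,1)\,\textbf{G}_{\overline{A_j}}$ and $\textbf{G}_{\overline{I_j(0,1)}} = (1,\,-1)\,\textbf{G}_{\overline{A_j}}$ are obtained by direct block matrix multiplication against the $2 \times 2$ block form of $\textbf{G}_{\overline{A_j}}$ established in the first step. No genuine obstacle arises here; the entire argument is a bookkeeping exercise transporting the bases of Lemma \ref{lem:7} through the identifications (\ref{eq:phi})--(\ref{eq:phipsi}).
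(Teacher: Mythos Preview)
Your argument is correct and follows exactly the route the paper intends: the paper simply says the corollary is obtained ``using formulas (\ref{eq:phi}), (\ref{eq:psi}), (\ref{eq:phipsi}))'' after Lemma~\ref{lem:7}, and you have faithfully unpacked that one-line justification. There is nothing to add or correct.
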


Let $\zeta(n) + 1 \leq j \leq r$. Let us recall the Bezout relations  (\ref{eq:fg}) for $\gcd(f_j(x), x-1)=1$ and $\gcd(f_j(x), x+1)=1$:
\[
(x-1) F_{j,1}(x) + f_j(x)F_{j,2}(x) = 1, \quad
(x+1) G_{j,1}(x) + f_j(x)G_{j, 2}(x) = 1.
\]
\begin{Lemma} \label{lem:10}
	Let $\gcd(q, n) = 1$ and $\zeta(n) + 1 \leq j \leq r$. Then
	\begin{enumerate}
		\item[1)] the set
		\[ 
		\mathcal{B}( A_j ) := \left\{ a^i b^k e_{f_j}(a) \mid i = 0 \dots \deg(f_j) - 1, \; k = 0,1 \right\}  
		\] 
		is a $\mathbb{F}_q$--basis of $\overline{ A_j }$;
		\item[2)] for any $u(x), v(x) \in \mathbb{F}_q[x]$ such that $(v(\alpha_j+\alpha_j^{-1}), u(\alpha_j+\alpha_j^{-1})) \neq (0, 0)$, the set
		\begin{equation*}
		\begin{split}
		\mathcal{B}\left( I_j\left(u(\alpha_j + \alpha_j^{-1}), v(\alpha_j + \alpha_j^{-1})\right) \right) := \\ :=
		\left\{ 
		a^i (1+b) F_{j,1}(a)G_{j,1}(a) \left( au(a+a^{-1}) - v(a+a^{-1}) \right) e_{f_j}(a) \mid 
		i = 0 \dots \deg(f_j) - 1
		\right\}
		\end{split}
		\end{equation*}
		is a $\mathbb{F}_q$--basis of $\overline{I_j\left(u(\alpha_j + \alpha_j^{-1}), v(\alpha_j + \alpha_j^{-1})\right)}$.
	\end{enumerate}
\end{Lemma}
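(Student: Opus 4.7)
The common root of both claims is that the idempotent $e_{f_j}(a)$ is, under $\mathcal{P}$, the identity in the $j$-th Wedderburn slot and zero in all others. For $\zeta(n)+1 \le j \le r$ the polynomial $f_j$ is auto-reciprocal, so both $\alpha_j$ and $\alpha_j^{-1}$ are roots of $f_j$; combined with (\ref{eq:orth}) this yields $\tau_j(e_{f_j}(a)) = I$ and $\mathcal{P}_j(e_{f_j}(a)) = \sigma_j(I) = I$, while $\mathcal{P}_k(e_{f_j}(a)) = 0$ for $k \neq j$. Hence $\overline{A_j} = \mathbb{F}_qD_n \cdot e_{f_j}(a)$, and using $D_n = \langle a\rangle_{D_n} \cup b\langle a\rangle_{D_n}$ this decomposes as an $\mathbb{F}_q$-space into $\overline{A_j} = L_j \oplus bL_j$, where $L_j := \mathbb{F}_q\langle a\rangle_{D_n} \cdot e_{f_j}(a)$. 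Since $f_j(a) e_{f_j}(a) = 0$ by the very definition (\ref{eq:ef}) of $e_{f_j}$, the cyclic code $L_j$ is isomorphic to the field $\mathbb{F}_q[x]/f_j(x) \cong \mathbb{F}_q[\alpha_j]$, of $\mathbb{F}_q$-dimension $\deg f_j$.

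For part 1), $\{a^i e_{f_j}(a) : 0 \le i < \deg f_j\}$ is the standard polynomial basis of $L_j$. Rewriting $a^i b\, e_{f_j}(a) = b\, a^{-i} e_{f_j}(a)$ identifies the second half of $\mathcal{B}(A_j)$ with the $b$-translate of $\{a^{-i} e_{f_j}(a) : 0 \le i < \deg f_j\}$, which is again a basis of $L_j$ (multiplication by $a^{-(\deg f_j - 1)}$ is an $\mathbb{F}_q$-automorphism of the field). Taking the union yields $\mathcal{B}(A_j)$.

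For part 2), set $\xi := F_{j,1}(a) G_{j,1}(a)(au(a+a^{-1}) - v(a+a^{-1})) e_{f_j}(a) \in L_j$, so that $w = (1+b)\xi$. Applying Theorem \ref{theor:3} to a generic matrix $\begin{pmatrix} kv & -ku \\ tv & -tu \end{pmatrix} \in I_j(u(\alpha_j+\alpha_j^{-1}), v(\alpha_j+\alpha_j^{-1}))$ with $k = k(\alpha_j+\alpha_j^{-1})$ and $t = t(\alpha_j+\alpha_j^{-1})$, one regroups the bracket $-M + aN - aP + a^2 Q$ inside $T_1$ as $(k(a+a^{-1}) - a\, t(a+a^{-1}))(au(a+a^{-1}) - v(a+a^{-1}))$, and similarly $-M + a^{-1}N - aP + Q$ inside $T_2$ as $(k(a+a^{-1}) - a^{-1} t(a+a^{-1}))(au(a+a^{-1}) - v(a+a^{-1}))$. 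Using that any polynomial in $a+a^{-1}$ commutes with $b$ (as $b\,p(a)\,b = p(a^{-1})$) and that $ab = ba^{-1}$, a short calculation gives $(k(a+a^{-1}) - a\, t(a+a^{-1}))\,b = b\,(k(a+a^{-1}) - a^{-1}\, t(a+a^{-1}))$, which collapses the two terms $T_1(a) e_{f_j}(a) + b\, T_2(a) e_{f_j}(a)$ into the clean identity
\[ \epsilon_j\!\begin{pmatrix} kv & -ku \\ tv & -tu \end{pmatrix} = \bigl(k(a+a^{-1}) - a\, t(a+a^{-1})\bigr)\, w. \]
Specializing to $k = 1$, $t = 0$ shows $w \in \overline{I_j(u(\alpha_j+\alpha_j^{-1}), v(\alpha_j+\alpha_j^{-1}))}$; since this image is a left ideal of $\mathbb{F}_qD_n$ (being $\mathcal{P}^{-1}$ of a left ideal in $\bigoplus_{k=1}^{r+s} A_k$), all $a^i w$ lie in it.

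Linear independence of $\{a^i w\}_{0 \le i < \deg f_j}$ is the main technical step. Suppose $p(a) w = 0$ for some $p \in \mathbb{F}_q[x]$ with $\deg p < \deg f_j$. Using $p(a) b = b\, p(a^{-1})$, expand $p(a) w = p(a)\xi + b\, p(a^{-1})\xi$; the $\mathbb{F}_q$-direct sum $\mathbb{F}_qD_n = \mathbb{F}_q\langle a\rangle_{D_n} \oplus b\,\mathbb{F}_q\langle a\rangle_{D_n}$ then forces $p(a)\xi = 0$ in $L_j$. Evaluating $\xi$ at the root $\alpha_j$ using (\ref{eq:fg}) (which gives $F_{j,1}(\alpha_j) G_{j,1}(\alpha_j) = (\alpha_j^2 - 1)^{-1}$) yields
\[ \xi(\alpha_j) = \frac{\alpha_j u(\alpha_j+\alpha_j^{-1}) - v(\alpha_j+\alpha_j^{-1})}{\alpha_j^2 - 1}. \]
The denominator is a unit since $\alpha_j \neq \pm 1$ for $j \ge \zeta(n)+1$, and the numerator is nonzero: otherwise, either $u(\alpha_j+\alpha_j^{-1}) \ne 0$ and $\alpha_j = v(\alpha_j+\alpha_j^{-1})/u(\alpha_j+\alpha_j^{-1}) \in F_j$, contradicting $[\mathbb{F}_q[\alpha_j] : F_j] = 2$ from (\ref{eq:dim}), or $u(\alpha_j+\alpha_j^{-1}) = 0$ and then $v(\alpha_j+\alpha_j^{-1}) \ne 0$ by hypothesis, making the numerator equal to $-v(\alpha_j+\alpha_j^{-1}) \ne 0$. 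Thus $\xi$ is a unit in the field $L_j$, and $p(a)\xi = 0$ forces $p \equiv 0 \pmod{f_j}$, i.e. $p = 0$. Finally, $\dim_{\mathbb{F}_q}\overline{I_j(u(\alpha_j+\alpha_j^{-1}), v(\alpha_j+\alpha_j^{-1}))} = \dim_{\mathbb{F}_q} I_j(\cdots) = 2[F_j : \mathbb{F}_q] = \deg f_j$ (injectivity of $(k,t) \mapsto$ matrix uses $(u(\alpha_j+\alpha_j^{-1}), v(\alpha_j+\alpha_j^{-1})) \ne (0,0)$), so these $\deg f_j$ linearly independent elements form a basis. The main obstacle is the algebraic manipulation producing the factored forms of $T_1, T_2$ and the commutation identity behind $\epsilon_j(\cdot) = (k(a+a^{-1}) - a\, t(a+a^{-1}))\, w$; once that factorization is in hand, the rest reduces to a dimension count and the single observation that $\xi$ is a unit of the field $L_j$.
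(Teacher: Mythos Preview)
Your proof is correct and follows essentially the same route as the paper's: produce the generator via the explicit $\epsilon_j$ formula of Theorem~\ref{theor:3} and verify linear independence by evaluating at $\alpha_j$, invoking $\alpha_j\notin F_j$ to see that the numerator $\alpha_j u(\alpha_j+\alpha_j^{-1})-v(\alpha_j+\alpha_j^{-1})$ cannot vanish. The differences are cosmetic --- you argue part~1) via the cyclic decomposition $\overline{A_j}=L_j\oplus bL_j$ instead of pushing through $\tau_j$, and in part~2) you compute the full factored form $\epsilon_j\bigl(\begin{smallmatrix} kv & -ku \\ tv & -tu \end{smallmatrix}\bigr)=(k(a+a^{-1})-a\,t(a+a^{-1}))\,w$, whereas the paper applies $\epsilon_j$ only to the single generator $\bigl(\begin{smallmatrix} v & -u \\ 0 & 0 \end{smallmatrix}\bigr)$ to obtain~$w$.
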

\begin{proof}
	Recall that $A_j = \mathbb{M}_2(\mathbb{F}_q[\alpha_j + \alpha_j^{-1}])$ and $\dim_{\mathbb{F}_q}(A_j) = 2 \deg(f_j)$ for $\zeta(r)+1 \leq j \leq r$ (see (\ref{eq:dim}), (\ref{eq:2})). Let us prove claim 1). Formulas (\ref{eq:2}), (\ref{eq:orth}) imply that
	\[
			 \mathcal{P}_j \left( \mathcal{B}(A_j) \right) = 
			 \sigma_j \tau_j \left( \mathcal{B}(A_j) \right) =
			 \sigma_j\left(
			 \left\{ 
			 \begin{pmatrix}
			 \alpha_j^i & 0 \\ 0 & \alpha_j^{-i}
			 \end{pmatrix},
			 \begin{pmatrix}
			 0 & \alpha_j^{-i} \\ \alpha_j^i & 0
			 \end{pmatrix}
			 \mathlarger \mid \; i = 1, \dots, \deg(f_j) - 1
			 \right\}
			 \right),
	\]
	and $\mathcal{P}_i \left( \mathcal{B}(A_j) \right) = \{ 0 \}$ if $i \neq j$. Since $f_j = f_j^*$ is the minimal polynomial of $\alpha_j$ and $\alpha_j^{-1}$ (see section 2) and since $\sigma_j$ is an automorphism of $M_2(\mathbb{F}_q[\alpha_j])$, it follows that the set
	$\mathcal{P}_j \left( \mathcal{B}(A_j) \right)$ is linear independent. So, since $\dim_{\mathbb{F}_q}(A_j) = 2 \deg(f_j) = |\mathcal{P}_j \left( \mathcal{B}(A_j) \right)|$, it follows that $\mathcal{B}( A_j )$ is a $\mathbb{F}_q$--basis of $\overline{ A_j }$. Claim 1) is proved.	
	
	Let us prove claim 2). Using (\ref{eq:Ij}), we get
	\[ 
	I_j\left(u(\alpha_j + \alpha_j^{-1}), v(\alpha_j + \alpha_j^{-1}) \right) =
	A_j \begin{pmatrix}
	v(\alpha_j + \alpha_j^{-1}) & -u(\alpha_j + \alpha_j^{-1}) \\
	0 & 0		
	\end{pmatrix}.
	\]
	Using Theorem \ref{theor:3}, we obtain
	\[ 
	\epsilon_j 
	\begin{pmatrix}
	v(\alpha_j + \alpha_j^{-1}) & -u(\alpha_j + \alpha_j^{-1}) \\
	0 & 0		
	\end{pmatrix} =
	(1+b) F_{j,1}(a)G_{j,1}(a) \left( au(a+a^{-1}) - v(a+a^{-1}) \right) e_{f_j}(a).
	\]
	Let $T = (1+b) F_{j,1}(a)G_{j,1}(a) \left( au(a+a^{-1}) - v(a+a^{-1}) \right) e_{f_j}(a)$; by definition we have
	\[ 
	\mathcal{B}\left( I_j\left(u(\alpha_j + \alpha_j^{-1}), v(\alpha_j + \alpha_j^{-1})\right) \right) =
	\left\{ a^i T \mid i = 0 \dots \deg(f_j)-1 \right\}
	\subset \overline{ I_j\left(u(\alpha_j + \alpha_j^{-1}), v(\alpha_j + \alpha_j^{-1})\right) }.
	\]
	We show that this set is linear independent. Assume the converse. Then there exists a polynomial $P(x) \in \mathbb{F}_q[x]$, $\deg(P) < \deg(f_j)$, such that
	$P(a) T = 0$. It follows that
	\[ 
	0  = \sigma^{-1} \mathcal{P}_j(P(a) T) = \tau_j (P(a) T) =
	\begin{pmatrix}
	P(\alpha_j) T(\alpha_j) & P(\alpha_j) T(\alpha_j^{-1}) \\
	P(\alpha_j^{-1}) T(\alpha_j) & P(\alpha_j^{-1}) T(\alpha_j^{-1})
	\end{pmatrix}
	\]
	(see (\ref{eq:2})). Since  $f_j = f_j^*$ is the minimal polynomial of $\alpha_j, \alpha_j^{-1} \in \mathbb{F}_q[\alpha_j]$, and $\deg(P) < \deg(f_j)$, it follows that $P(\alpha_j) \neq 0$, $P(\alpha_j^{-1}) \neq 0$. So, we get $T(\alpha_j) = T(\alpha_j^{-1}) = 0$. Since $(v(\alpha_j+\alpha_j^{-1}), u(\alpha_j+\alpha_j^{-1})) \neq (0, 0)$ and
	\[ T(\alpha_j) = -\frac{\alpha_j v(\alpha_j + \alpha_j^{-1}) - u(\alpha_j + \alpha_j^{-1})}{(\alpha_j^2 - 1)}. \]
	we have arrived at a contradiction.	Hence the set 
	$\mathcal{B}\left( I_j\left(u(\alpha_j + \alpha_j^{-1}), v(\alpha_j + \alpha_j^{-1})\right) \right)$ is linear independent and its cardinality is equal to $\deg(f_j) = \dim_{\mathbb{F}_q}\left(I_j\left(u(\alpha_j + \alpha_j^{-1}), v(\alpha_j + \alpha_j^{-1})\right)  \right)$. So, claim 2) is proved.
\end{proof}
\begin{remark} \label{remark:7} \hspace{2em}
	\begin{itemize} 
		\item[1)] Let $P \in \mathbb{F}_q[x]$ be a polynomial such that $P(\alpha_j) \neq 0$, $P(\alpha_j^{-1}) \neq 0$. Then the matrix
		\[ \mathcal{P}_j(P(a)) = Z_j^{-1} \begin{pmatrix}
		P(\alpha_j) & 0 \\
		0 & P(\alpha_j^{-1})
		\end{pmatrix} Z_j \in A_j  \]
		is invertible, and the set $\mathcal{B}(A_j)P(a)$ is also a $\mathbb{F}_q$--basis of $\overline{A_j}$.
		\item[2)]  Let $u, v \in \mathbb{F}_q[x]$ be polynomials such that $(v(\alpha_j+\alpha_j^{-1}), u(\alpha_j+\alpha_j^{-1})) \neq (0, 0)$. Suppose that
		\[ P(a) = F_{j,1}(a)G_{j,1}(a) \left( au(a+a^{-1}) - v(a+a^{-1}) \right); \]
		 then $P(\alpha_j) \neq 0$, $P(\alpha_j^{-1}) \neq 0$, and Lemma \ref{lem:10} implies that any element of $\mathcal{B}\left( I_j\left(u(\alpha_j + \alpha_j^{-1}), v(\alpha_j + \alpha_j^{-1})\right) \right)$ is a sum of some elements of $\mathcal{B}(A_j)P(a)$.		
	\end{itemize}	
\end{remark}
Lemma \ref{lem:10} allows us to describe generating matrices of $\overline{A_j}$ and $\overline{I_j\left(u(\alpha_j + \alpha_j^{-1}), v(\alpha_j + \alpha_j^{-1})\right)}$.
\begin{corollary} \label{col:5} Let $\zeta(n) + 1 \leq j \leq r$. Then
	\[ \textbf{G}_{\overline{A_j}} = 
	 	\begin{pmatrix}
	 	\phi(e_{f_j}(a)) \\ \phi(be_{f_j}(a)) \\
	 	\phi(ae_{f_j}(a)) \\ \phi(ab e_{f_j}(a)) \\ \vdots \\
	 	\phi(a^{\deg(f_j) - 1} e_{f_j}(a)) \\ \phi(a^{\deg(f_j) - 1} b e_{f_j}(a))
	 	\end{pmatrix} =
		\begin{pmatrix}
		\psi(e_{f_j}(a)) & 0 \\ 
		0 & \psi(e_{f_j}(a)) \\
		\psi(a e_{f_j}(a)) & 0 \\ 
		0 & \psi(a^{-1} e_{f_j}(a)) \\
		\vdots & \vdots \\
		\psi(a^{\deg(f_j) - 1} e_{f_j}(a)) & 0 \\ 
		0 & \psi(a^{-\deg(f_j) + 1} e_{f_j}(a)) \\
		\end{pmatrix};
	\]	
	\[ 
		\textbf{G}_{\overline{I_j\left(u(\alpha_j + \alpha_j^{-1}), v(\alpha_j + \alpha_j^{-1})\right)}} =
		\begin{pmatrix}
		\psi( P(a) ) & \psi( P(a) ) \\
		\psi( a P(a) ) & \psi( a^{-1} P(a) ) \\
		\vdots & \vdots \\
		\psi(a^{\deg(f_j) - 1} P(a)) & \psi(a^{-\deg(f_j) + 1} P(a))
		\end{pmatrix}
	 \]
	 (see (\ref{eq:phipsi})), where $P(a) = F_{j,1}(a)G_{j,1}(a) \left( au(a+a^{-1}) - v(a+a^{-1}) \right) e_{f_j}(a)$.
\end{corollary}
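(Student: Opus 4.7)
The plan is to read off the rows of each generating matrix directly from the bases provided by Lemma~\ref{lem:10}, using the identifications (\ref{eq:phi})--(\ref{eq:phipsi}) to convert abstract group-algebra elements into $2n$-tuples. Since a generating matrix is by definition obtained by stacking the images under $\phi$ of any chosen $\mathbb{F}_q$-basis, the corollary amounts to a routine translation; the only nontrivial ingredient is the dihedral relation $a^i b = b a^{-i}$, which comes from (\ref{eq:dih}).

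For $\overline{A_j}$, I would take the basis $\{a^i b^k e_{f_j}(a)\}$ from part~1) of Lemma~\ref{lem:10} and order it as in the corollary: $e_{f_j}(a)$, $b e_{f_j}(a)$, $a e_{f_j}(a)$, $ab e_{f_j}(a)$, and so on up to $a^{\deg(f_j)-1} b e_{f_j}(a)$. For the $k=0$ rows, $a^i e_{f_j}(a)$ is a pure $P(a)$-part, so by the first formula in (\ref{eq:phipsi}) we get $\phi(a^i e_{f_j}(a)) = (\psi(a^i e_{f_j}(a)),\, 0)$. For the $k=1$ rows, I would rewrite $a^i b e_{f_j}(a) = b\bigl(a^{-i} e_{f_j}(a)\bigr)$ using $a^i b = b a^{-i}$, obtaining a pure $bQ(a)$-part; the third formula in (\ref{eq:phipsi}) then gives $\phi(a^i b e_{f_j}(a)) = \bigl(0,\, \psi(a^{-i} e_{f_j}(a))\bigr)$. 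Stacking these in the prescribed order yields precisely the block matrix claimed for $\mathbf{G}_{\overline{A_j}}$.

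For $\overline{I_j(u(\alpha_j+\alpha_j^{-1}), v(\alpha_j+\alpha_j^{-1}))}$, I would use the basis $\{a^i (1+b) P(a)\}$ from part~2) of Lemma~\ref{lem:10}, with $P(a) = F_{j,1}(a)G_{j,1}(a)\bigl(au(a+a^{-1}) - v(a+a^{-1})\bigr) e_{f_j}(a)$. Expanding and again invoking $a^i b = b a^{-i}$, the $i$-th basis vector becomes $a^i P(a) + b \bigl(a^{-i} P(a)\bigr)$, which is already written in the form $P'(a) + bQ'(a)$. Applying $\phi$ via (\ref{eq:phipsi}) produces the row $\bigl(\psi(a^i P(a)),\, \psi(a^{-i} P(a))\bigr)$, exactly as in the second displayed matrix of the corollary.

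There is no real obstacle: the cardinalities of the bases already match $\dim_{\mathbb{F}_q}(\overline{A_j}) = 2\deg(f_j)$ and $\dim_{\mathbb{F}_q}(\overline{I_j(\cdot,\cdot)}) = \deg(f_j)$ by Lemma~\ref{lem:10}, so no independence verification is needed. The only point requiring care is the asymmetry between the two column blocks: the exponent of $a$ in the second block flips sign relative to the first, and this is precisely the manifestation of the noncommutativity $a^i b = b a^{-i}$ in the coordinate representation.
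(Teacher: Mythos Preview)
Your proposal is correct and follows exactly the approach the paper intends: the corollary is stated without proof, immediately after Lemma~\ref{lem:10}, as a direct translation of the bases given there via the coordinate maps (\ref{eq:phi})--(\ref{eq:phipsi}). Your handling of the dihedral relation $a^i b = b a^{-i}$ to obtain the sign flip in the second block is precisely the computation underlying the passage from the $\phi$-form to the $(\psi,\psi)$-block form, and nothing further is needed.
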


Finally, let us consider the case $r + 1 \leq j \leq r+s$.
\begin{Lemma} \label{lem:8}
	Let $\gcd(q,n)=1$, and let $r+1 \leq j \leq r+s$. Then
	\begin{enumerate}
		\item[1)] the set
		\[ 
			\mathcal{B}(A_j) := \left\{ 		
			a^i b^k e_{f_j}(a), \;
			a^i b^k e_{f^*_j}(a), \;
			 \mid i = 0 \dots \deg(f_j) - 1, \; k=0,1
			\right\}
		\]
		is a $\mathbb{F}_q$--basis of $\overline{ A_j }$;
		\item[2)]  for any polynomials $u, v \in \mathbb{F}_q[x]$ such that $(v(\alpha_j^{-1}), u(\alpha_j)) \neq (0, 0)$, the set
		\[
			\mathcal{B}(I_j(-v(\alpha_j^{-1}), u(\alpha_j))) :=
			\left\{
			a^i b^k \left( u(a) e_{f_j}(a) + bv(a) e_{f^*_j}(a) \right) \mid
			i = 0 \dots \deg(f_j) - 1, \; k = 0, 1
			\right\}
		\]
		is a $\mathbb{F}_q$--basis of $\overline{I_j(-v(\alpha_j^{-1}), u(\alpha_j))})$.
	\end{enumerate}
\end{Lemma}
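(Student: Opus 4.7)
The plan is to mirror the proof of Lemma \ref{lem:10}, adapted to the present case where $\mathcal{P}_j = \tau_j$ directly (with no conjugation by $\sigma_j$) and where the block $A_j$ corresponds to the pair $(f_j, f^*_j)$, so that both idempotents $e_{f_j}(a)$ and $e_{f^*_j}(a)$ appear in the generators. The main inputs are (\ref{eq:orth}), which here specializes to $e_{f_j}(\alpha_j) = 1$, $e_{f_j}(\alpha_j^{-1}) = 0$, $e_{f^*_j}(\alpha_j) = 0$, $e_{f^*_j}(\alpha_j^{-1}) = 1$, together with the identity $a^i b = b a^{-i}$ from (\ref{eq:dih}) which is needed to put each product into the canonical form (\ref{eq:pq}).

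For claim 1, I apply $\tau_j$ to each of the four families of generators. A direct calculation using the two ingredients above gives
\[
\tau_j(a^i e_{f_j}(a)) = \alpha_j^i \begin{pmatrix} 1 & 0 \\ 0 & 0 \end{pmatrix}, \quad
\tau_j(a^i b e_{f_j}(a)) = \alpha_j^{-i} \begin{pmatrix} 0 & 0 \\ 1 & 0 \end{pmatrix},
\]
\[
\tau_j(a^i e_{f^*_j}(a)) = \alpha_j^{-i} \begin{pmatrix} 0 & 0 \\ 0 & 1 \end{pmatrix}, \quad
\tau_j(a^i b e_{f^*_j}(a)) = \alpha_j^i \begin{pmatrix} 0 & 1 \\ 0 & 0 \end{pmatrix}.
\]
Since both $\{\alpha_j^i\}_{i=0}^{\deg f_j - 1}$ and $\{\alpha_j^{-i}\}_{i=0}^{\deg f_j - 1}$ are $\mathbb{F}_q$-bases of $F_j = \mathbb{F}_q[\alpha_j]$, the image $\tau_j(\mathcal{B}(A_j))$ spans $\mathrm{M}_2(F_j) = A_j$. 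Combined with $\mathcal{P}_l(\mathcal{B}(A_j)) = 0$ for $l \neq j$ (from (\ref{eq:orth})) and the cardinality match $|\mathcal{B}(A_j)| = 4 \deg(f_j) = \dim_{\mathbb{F}_q} A_j$, this establishes that $\mathcal{B}(A_j)$ is a basis of $\overline{A_j}$.

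For claim 2, set $W = u(a)e_{f_j}(a) + b v(a) e_{f^*_j}(a)$. Rewriting $a^i W$ and $a^i b W$ in canonical form using the twist $a^i b = b a^{-i}$ (applied twice in the second case, since $W$ itself carries a $b$-part) and invoking the orthogonality relations above yields
\[
\tau_j(a^i W) = \begin{pmatrix} \alpha_j^i u(\alpha_j) & \alpha_j^i v(\alpha_j^{-1}) \\ 0 & 0 \end{pmatrix}, \quad
\tau_j(a^i b W) = \begin{pmatrix} 0 & 0 \\ \alpha_j^{-i} u(\alpha_j) & \alpha_j^{-i} v(\alpha_j^{-1}) \end{pmatrix}.
\]
By (\ref{eq:a1}), both matrices lie in $I_j(-v(\alpha_j^{-1}), u(\alpha_j))$, and letting the scalars $k = \alpha_j^i$, $t = \alpha_j^{-i}$ vary shows the images span the two independent rows of this ideal. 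For linear independence, a null combination splits by the row in which the nonzero entries sit; the hypothesis $(v(\alpha_j^{-1}), u(\alpha_j)) \neq (0,0)$ forces at least one of $u(\alpha_j), v(\alpha_j^{-1})$ to be nonzero in $F_j$, so $\mathbb{F}_q$-linear independence of $\{\alpha_j^i\}$ (resp. $\{\alpha_j^{-i}\}$) makes all scalar coefficients vanish. Since $|\mathcal{B}(I_j(-v(\alpha_j^{-1}), u(\alpha_j)))| = 2\deg(f_j) = \dim_{\mathbb{F}_q} I_j(-v(\alpha_j^{-1}), u(\alpha_j))$, this concludes the proof.

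The main obstacle is the careful bookkeeping when commuting $b$ past powers of $a$, especially in claim 2 where $W$ itself contains a $b$-part, so that computing $a^i b W$ in canonical form requires two successive applications of $a^i b = b a^{-i}$ before $\tau_j$ can be applied. Once these manipulations are done correctly, the identification of the images with $I_j(-v(\alpha_j^{-1}), u(\alpha_j))$ is immediate from (\ref{eq:a1}), and the overall argument proceeds in close parallel with the proof of Lemma \ref{lem:10}.
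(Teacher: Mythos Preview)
Your proof is correct and follows essentially the same approach as the paper. The paper exhibits explicit $\mathbb{F}_q$--bases $T$ and $T'$ of $A_j$ and $I_j(-v(\alpha_j^{-1}),u(\alpha_j))$ and then invokes the formula for $\epsilon_j$ from Theorem~\ref{theor:3} to verify $\epsilon_j(T)=\mathcal{B}(A_j)$ and $\epsilon_j(T')=\mathcal{B}(I_j(\dots))$; you instead apply $\tau_j=\mathcal{P}_j$ directly to the proposed sets and recover the very same matrices, which, since $\tau_j\epsilon_j=\mathrm{id}_{A_j}$, is the same verification read in the opposite direction.
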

\begin{proof}
	Recall that $A_j = \mathrm{M}_2(\mathbb{F}_q[\alpha_j])$ for $r+1 \leq j \leq r+s$. 	
	Let us prove claim 1). Since the sets
	\[ 
		\{ \alpha_j^{i} \mid i=0\dots \deg(f_j)-1 \}, \quad
		\{ \alpha_j^{-i} \mid i=0\dots \deg(f_j)-1 \}
	\]
	are $\mathbb{F}_q$--bases of the $\mathbb{F}_q[\alpha_j]$ (see section 2), it follows that the set	
	\[ T = \left\{		
	\begin{pmatrix}
	\alpha_j^i & 0 \\
	0 & 0		
	\end{pmatrix},
	\begin{pmatrix}
	0 & 0 \\
	0 & \alpha_j^{-i}		
	\end{pmatrix} 
	\begin{pmatrix}
	0 & 0 \\
	\alpha_j^{-i} & 0		
	\end{pmatrix},
	\begin{pmatrix}
	0 & \alpha_j^{i} \\
	0 & 0		
	\end{pmatrix},
	\Big | \;
	i = 1, \dots ,\deg(f_j) - 1
	\right\}. \]
	is a $\mathbb{F}_q$--basis of $A_j$. Using Theorem \ref{theor:3}, we see that $\epsilon_j(T) = \mathcal{B}(A_j)$. Hence $\mathcal{B}(A_j)$  is a basis of $\overline{ A_j }$. Claim 1) is proved.

	Let us prove claim 2). The set  
	\[ T' = \left\{		
	\alpha_j^i \begin{pmatrix}
	u(\alpha_j) & v(\alpha_j^{-1}) \\
	0 & 0		
	\end{pmatrix},
	\alpha_j^{-i} \begin{pmatrix}
	0 & 0 \\
	u(\alpha_j) & v(\alpha_j^{-1}) \\
	\end{pmatrix}
	\Big | \;
	i = 0...\dots \deg(f_j) 
	\right\} \]
	is a $\mathbb{F}_q$--basis of $I_j(-v(\alpha_j^{-1}), u(\alpha_j))$ (see (\ref{eq:a1})). Since
	\[  
	\epsilon_j \begin{pmatrix}
	u(\alpha_j) & v(\alpha_j^{-1}) \\
	0 & 0		
	\end{pmatrix}  =		
	u(a) e_{f_j}(a) + bv(a) e_{f^*_j}(a), \quad 
 	\mathcal{P}_j(a^i) = \begin{pmatrix}
 	\alpha_j^i & 0 \\ 0 & \alpha_j^{-i}
 	\end{pmatrix}, \quad
	 \mathcal{P}_j(b) = \begin{pmatrix}
 	0 & 1 \\ 1 & 0
 	\end{pmatrix}
 	\]
	 (see Theorem \ref{theor:3}, (\ref{eq:a1})), it follows that
	 $\mathcal{B}(I_j(-v(\alpha_j^{-1}), u(\alpha_j)))  = \epsilon_j(T')$. Hence  $\mathcal{B}(I_j(-v(\alpha_j^{-1}), u(\alpha_j)))$  is a $\mathbb{F}_q$--basis of $\overline{I_j(-v(\alpha_j^{-1}), u(\alpha_j))})$. Claim 2) is proved.
\end{proof}
\begin{remark} \label{remark:8}
	Let $u, v \in \mathbb{F}_q[x]$ be polynomials such that $u(\alpha_j) \neq 0$ and $v(\alpha_j^{-1}) \neq 0$. Then	
	\begin{itemize}
		\item[1)] the element
		\[ 
		\mathcal{P}_j \left( u(a)e_{f_j}(a) + v(a)e_{f_j^*}(a) \right) =
		\begin{pmatrix}
		u(\alpha_j) & 0 \\ 0 & v(\alpha_j^{-1})
		\end{pmatrix},
		\]
		(see (\ref{eq:2}), (\ref{eq:orth})) is invertible in $A_j$. In addition, since $\mathcal{P}_j$ is an isomorphism between $A_j$ and $\overline{A_j}$, it follows that the set
		\[  \begin{split}
			\mathcal{B}( A_j ) \left( u(a)e_{f_j}(a) + v(a)e_{f_j^*}(a) \right) = \\ =
			\left\{ 		
			a^i b^k u(a) e_{f_j}(a), \;
			a^i b^k v(a) e_{f^*_j}(a), \;
			\mid i = 0 \dots \deg(f_j) - 1, \; k=0,1
			\right\}	
		\end{split} \]
		is a basis of $\overline{ A_j }$;
		\item[2)] since $
			a^i b^k \left( u(a) e_{f_j}(a) + bv(a) e_{f^*_j}(a) \right) = ( a^i b^k u(a) e_{f_j}(a) ) + ( a^i b^{k+1} v(a) e_{f^*_j}(a) ),
		$
		it follows that any element of $\mathcal{B}\left( \overline{I_j(-v(\alpha_j^{-1}), u(\alpha_j))} \right)$ is a sum of elements of $\mathcal{B}( A_j ) \left( u(a)e_{f_j}(a) + v(a)e_{f_j^*}(a) \right)$.
	\end{itemize}
\end{remark}
Lemma \ref{lem:8} also allows us to explicitly describe generating matrices of $\overline{A_j}$ and $\overline{I(-v(\alpha_j^{-1}), u(\alpha_j))}$ for $r+1 \leq j \leq r+s$.
\begin{corollary} \label{col:7}
	Let $r+1 \leq j \leq r+s$. Then for any polynomials $u, v \in \mathbb{F}_q[x]$ such that $u(\alpha_j) \neq 0$, $v(\alpha_j^{-1}) \neq 0$ the following formulas hold (see (\ref{eq:phipsi}))
	\[
		\textbf{G}_{\overline{A_j}} =
		\begin{pmatrix}
		\vdots \\
		\phi(a^i e_{f_j}(a)) \\
		\phi(a^i b e_{f^*_j}(a)) \\ 
		\phi(a^i e_{f^*_j}(a)) \\
		\phi(a^i b e_{f_j}(a)) \\ 
		\vdots
		\end{pmatrix} =
		\begin{pmatrix}
		\vdots & \vdots \\
		\psi(a^i e_{f_j}(a)) & 0 \\
		0 & \psi(a^{-i} e_{f^*_j}(a)) \\
		\psi(a^i e_{f^*_j}(a)) & 0 \\
		0 & \psi(a^{-i} e_{f_j}(a)) \\
		\vdots & \vdots
		\end{pmatrix} \quad (i = 0, \dots, \deg(f_j) - 1);
	\]
	\[ 
		\textbf{G}_{\overline{I_j(-v(\alpha_j^{-1}), u(\alpha_j))})} = 
		\begin{pmatrix}
		\vdots & \vdots \\
		\psi(a^{i} u(a) e_{f_j}(a)) & \psi(a^{-i} v(a) e_{f^*_j}(a)) \\
		\psi(a^{i} v(a) e_{f^*_j}(a)) & \psi(a^{-i} u(a) e_{f_j}(a)) \\
		\vdots & \vdots 
		\end{pmatrix} \quad (i = 0, \dots, \deg(f_j) - 1);
	\]
	\[ 
		\textbf{G}_{\overline{ I_j(1, 0) }} = 
		\begin{pmatrix}
		\vdots & \vdots \\
		\psi(a^{i} e_{f^*_j}(a)) & 0 \\
		0 & \psi(a^{-i} e_{f^*_j}(a)) \\
		\vdots & \vdots 
		\end{pmatrix} \quad (i = 0, \dots, \deg(f_j) - 1);
	\]
	\[ 
		\textbf{G}_{\overline{ I_j(0, 1) }} = 
		\begin{pmatrix}
		\vdots & \vdots \\
		\psi(a^{i} e_{f_j}(a)) & 0 \\
		0 & \psi(a^{-i} e_{f_j}(a)) \\
		\vdots & \vdots 
		\end{pmatrix} \quad (i = 0, \dots, \deg(f_j) - 1).
	\]
\end{corollary}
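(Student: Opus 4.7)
The plan is to derive each of the four matrix formulas by specializing the basis descriptions of Lemma \ref{lem:8} and then directly applying the linear isomorphism formulas (\ref{eq:phipsi}), which translate any $u = P(a)+bQ(a) \in \mathbb{F}_qD_n$ into the pair $(\psi(P(a)),\psi(Q(a))) \in \mathbb{F}_q^{2n}$. The only dihedral identity needed throughout is $a^i b = b a^{-i}$ (see (\ref{eq:dih})), which normalizes every basis element to the canonical form $P(a) + bQ(a)$ before reading off $\phi$.

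For $\textbf{G}_{\overline{A_j}}$, I take the basis given by Lemma \ref{lem:8}.1, namely $\{a^i e_{f_j}(a),\ a^i b e_{f_j}(a),\ a^i e_{f^*_j}(a),\ a^i b e_{f^*_j}(a) : i=0,\dots,\deg(f_j)-1\}$, and apply $\phi$ to each. The elements without $b$ have $Q \equiv 0$ and produce the rows with image in the first block; rewriting $a^i b e_{f_j}(a) = b a^{-i} e_{f_j}(a)$ (and similarly for $e_{f_j^*}(a)$) shows that the rows with $b$ have $P \equiv 0$ and yield $\psi(a^{-i} e_{f_j}(a))$, $\psi(a^{-i} e_{f^*_j}(a))$ respectively in the second block. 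Ordering these rows as in the statement gives the claimed block matrix.

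For $\textbf{G}_{\overline{I_j(-v(\alpha_j^{-1}), u(\alpha_j))}}$, I start from the Lemma \ref{lem:8}.2 basis $\{a^i b^k (u(a) e_{f_j}(a) + b v(a) e_{f^*_j}(a)) : i=0,\dots,\deg(f_j)-1,\ k=0,1\}$. For $k=0$, the element expands as $a^i u(a) e_{f_j}(a) + b a^{-i} v(a) e_{f^*_j}(a)$, whose $\phi$-image is $(\psi(a^i u(a) e_{f_j}(a)),\,\psi(a^{-i} v(a) e_{f^*_j}(a)))$; this is the first of the paired rows displayed in the corollary. For $k=1$, left-multiplication by $b$ and the relation $b^2 = 1$ give $a^i v(a) e_{f^*_j}(a) + b a^{-i} u(a) e_{f_j}(a)$, whose $\phi$-image is the second paired row. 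Collecting these over all $i$ yields the stated matrix.

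The last two formulas for $\textbf{G}_{\overline{I_j(1,0)}}$ and $\textbf{G}_{\overline{I_j(0,1)}}$ are obtained as specializations of the previous one: choose $u \equiv 0,\ v \equiv -1$ to realize $I_j(1,0)$, and $u \equiv 1,\ v \equiv 0$ to realize $I_j(0,1)$. Both choices satisfy the non-degeneracy hypothesis $(v(\alpha_j^{-1}), u(\alpha_j)) \ne (0,0)$ demanded by Lemma \ref{lem:8}.2, so substituting into the general formula and dropping the resulting zero columns of the paired blocks produces the two displayed matrices, up to an overall sign in each row which does not affect the row span. There is no substantive mathematical obstacle here --- the entire argument is bookkeeping; the only care needed is to track the normalization $a^i b = b a^{-i}$ consistently and to preserve the pairing of the $k=0$ and $k=1$ rows so that the final matrices exhibit the displayed block structure.
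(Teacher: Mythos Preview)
Your proof is correct and follows exactly the approach the paper intends: the corollary is stated immediately after Lemma~\ref{lem:8} with the remark that the lemma ``allows us to explicitly describe generating matrices'', and your argument carries this out by applying $\phi$ via (\ref{eq:phipsi}) to the bases of Lemma~\ref{lem:8}, using only the normalization $a^i b = b a^{-i}$. Your handling of $I_j(1,0)$ and $I_j(0,1)$ by specializing Lemma~\ref{lem:8}.2 (rather than the already-proved second formula, whose hypothesis $u(\alpha_j)\neq 0,\ v(\alpha_j^{-1})\neq 0$ would be violated) is the right move, and the residual sign/row-order discrepancy you note is indeed harmless for a generating matrix.
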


Now we can describe a $\mathbb{F}_q$--basis of an arbitary dihedral code $I \subset \mathbb{F}_qD_{n}$. In addition, we can obtain its  generating and check matrices.

\begin{theorem} \label{theor:matr}
	Let $\gcd(q, n) = 1$, let $I \subset \mathbb{F}_qD_{n}$ be a dihedral code, and let
	\[
	\mathcal{P}(I) = \bigoplus_{j=1}^{r+s} B_j, \quad	
	B_j = \begin{cases}
	A_j, & j \in J_1 \\
	I_j(0, 1), & j \in J_2 \\
	I_j(1, 0), & j \in J_3 \\
	I_j(-x_j, 1), & j \in J_4 \\			
	0, & j \notin J_1 \cup J_2	\cup J_3 \cup J_4
	\end{cases}
	\]
	(see (\ref{eq:form}) -- (\ref{eq:15})). Then
	\begin{enumerate}
		\item[1)] the set $T = \bigcup_{j=1}^{r+s} \mathcal{B}(B_j)$ is a basis of $I$;
		\item[2)] the block matrix $\textbf{G}_I$, given by
		\[ \textbf{G}_I =  
			\begin{pmatrix}
				\vdots \\
				\textbf{G}_{\overline{B_j}} \\
				\vdots
			\end{pmatrix}, \quad j \in J_1 \cup J_2 \cup J_3 \cup J_4,
		\]
		is a generating matrix of $I$.
	\end{enumerate}
\end{theorem}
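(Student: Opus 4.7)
The plan is to deduce both claims directly from the component-wise results of Lemmas \ref{lem:7}, \ref{lem:10}, \ref{lem:8} together with Corollaries \ref{col:3}, \ref{col:5}, \ref{col:7}; the argument is essentially bookkeeping once the decomposition $\mathcal{P}^{-1} = \sum_{j=1}^{r+s} \epsilon_j$ is in hand.

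For claim 1), I would first use that $\mathcal{P}$ is an $\mathbb{F}_q$-algebra isomorphism and that the relations $\mathcal{P}_j \epsilon_j = \mathrm{id}_{A_j}$, $\mathcal{P}_i \epsilon_j = 0$ for $i \neq j$ (equation (\ref{eq:eps})) imply
\[ I = \mathcal{P}^{-1}\!\left( \bigoplus_{j=1}^{r+s} B_j \right) = \sum_{j=1}^{r+s} \epsilon_j(B_j) = \sum_{j=1}^{r+s} \overline{B_j}, \]
and this sum is an internal $\mathbb{F}_q$-vector space direct sum since $\mathcal{P}$ is a linear bijection that sends the internal sum on the left to the external direct sum on the right. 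Hence any union of $\mathbb{F}_q$-bases of the summands $\overline{B_j}$ is a basis of $I$. In each of the five possible forms of $B_j$ (namely $A_j$, $I_j(1,0)$, $I_j(0,1)$, $I_j(-x_j,1)$, or $0$) the set $\mathcal{B}(B_j)$ is by construction a basis of $\overline{B_j}$ by Lemmas \ref{lem:7}, \ref{lem:10}, \ref{lem:8}, with the convention $\mathcal{B}(0) = \emptyset$. The only care needed is that for $\zeta(n)+1 \leq j \leq r+s$ the ideals $I_j(1,0)$ and $I_j(0,1)$ must be put into the parametric forms $I_j(u(\alpha_j+\alpha_j^{-1}),v(\alpha_j+\alpha_j^{-1}))$ or $I_j(-v(\alpha_j^{-1}),u(\alpha_j))$ required by the lemmas, which amounts to choosing trivial constant polynomials $u$ and $v$. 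Taking the union over $j$ yields claim 1).

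For claim 2), I would invoke the linear isomorphism $\phi: \mathbb{F}_qD_n \to \mathbb{F}_q^{2n}$ from (\ref{eq:phi}): the rows of any generating matrix of $I$ are precisely the vectors $\phi(s)$ as $s$ ranges over a basis of $I$. By the definitions preceding them, the blocks $\textbf{G}_{\overline{B_j}}$ appearing in Corollaries \ref{col:3}, \ref{col:5}, \ref{col:7} consist exactly of the rows $\phi(\mathcal{B}(B_j))$. Vertical concatenation of these blocks over $j \in J_1 \cup J_2 \cup J_3 \cup J_4$ thus produces the image under $\phi$ of the basis $T$, which is a generating matrix of $I$. The only potential obstacle is verifying that no subtlety is lost when $B_j = 0$ or when $B_j = A_j$ is reached from different branches of the case analysis; both are immediate since $\mathcal{B}(0)$ contributes no rows and $\mathcal{B}(A_j)$ is the same set regardless of whether one arrives at it via $J_1$ or not.
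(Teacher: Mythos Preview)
Your proposal is correct and follows essentially the same approach as the paper: decompose $I$ as the internal direct sum $\bigoplus_j \overline{B_j}$ via the isomorphism $\mathcal{P}$, invoke Lemmas \ref{lem:7}--\ref{lem:8} for the componentwise bases, and then read off the generating matrix from Corollaries \ref{col:3}--\ref{col:7}. Your write-up is in fact more explicit than the paper's (which dispatches both claims in three sentences), and your remarks about fitting $I_j(1,0)$, $I_j(0,1)$ into the parametric forms and handling $\mathcal{B}(0)=\emptyset$ are minor clarifications the paper leaves implicit.
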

\begin{proof}
	Decomposition (\ref{eq:2}) and Theorem \ref{theor:codes} imply that  $I$ is a direct sum of $\overline{B_j}$, $j \in J_1 \cup J_2 \cup J_3 \cup J_4$. It follows that the union of bases of the left ideals $\overline{B_j}$ is a basis of $I$. In fact, bases of the left ideals $\overline{B_j}$ were described in Lemmas \ref{lem:7}--\ref{lem:8}. This concludes the proof of claim 1).
	
	Claim 2) is obtained from claim 1) and corollaries \ref{col:3}--\ref{col:7}. 
\end{proof}
\begin{remark}
	As is well--known, a check matrix $H$ of a code $I$ is a generating matrix of the dual code $I^{\perp}$. Hence  $H$ can be obtained by using Theorem \ref{theor:dual} and Theorem \ref{theor:matr} consistently.
\end{remark}
\begin{corollary} \label{col:matr}
	Let $I$ be a dihedral code. Then by rearranging the rows its generating matrix $\textbf{G}_I$ can be represented as
	\[
	\left( \begin{array}{c|c}
	G_{I, 1} & 0 \\
	0 & G_{I, 2} \\
	\cline{1-2}
	G_{I, 3} & G_{I, 4}
	\end{array}	\right),
	\]
	where
	\begin{itemize}
		\item[1)] the matrices $G_{I, 1}$, $G_{I, 2}$ are generating matrices of some cyclic code, i.e. $G_{I, 2} = S_1 G_{I, 1}$, where $S_1$ is  invertible;
		\item[2)] the matrices $G_{I, 3}$, $G_{I, 4}$ are generating matrices of some cyclic code, i.e. $G_{I, 4} = S_2 G_{I, 3}$,  where $S_2$ is  invertible.
	\end{itemize}	
\end{corollary}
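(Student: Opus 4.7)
The plan is to apply Theorem \ref{theor:matr} together with Corollaries \ref{col:3}, \ref{col:5}, and \ref{col:7} to read off $\textbf{G}_I$ as a stack of blocks $\textbf{G}_{\overline{B_j}}$, and then to reorganize its rows. With respect to the decomposition $\mathbb{F}_q^{2n}=\mathbb{F}_q^n\oplus\mathbb{F}_q^n$ induced by $\phi$, every such row has exactly one of three shapes: $(\psi(x),0)$, $(0,\psi(y))$, or $(\psi(x),\psi(y))$ with both halves nonzero. Moving all rows of the first two kinds above those of the third produces the asserted shape, with $G_{I,1}$ collecting the nonzero left halves of the right-zero rows, $G_{I,2}$ the nonzero right halves of the left-zero rows, and $(G_{I,3}\mid G_{I,4})$ collecting the two-sided rows.

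First I would catalogue which $\overline{B_j}$ supplies which shape. Inspecting the three corollaries, the one-sided rows come precisely from $\overline{A_j}$ for every $j\in J_1$ and from $\overline{I_j(1,0)}$, $\overline{I_j(0,1)}$ when $j\in J_2\cup J_3$ satisfies $j\ge r+1$, while the two-sided rows come from $\overline{I_j(1,0)}$ and $\overline{I_j(0,1)}$ when $j\in J_2\cup J_3$ satisfies $j\le r$, together with all $\overline{I_j(-x_j,1)}$ for $j\in J_4$. After the permutation, the nonzero halves of the top rows are explicitly of the form $\psi(a^i w)$ (on the left) and $\psi(a^{-i}w)$ (on the right), with $w$ ranging over a common list of elements of $\mathcal{R}_n$ such as $e_{f_j}(a)$ and $e_{f_j^*}(a)$ extracted from the corollaries.

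The key observation is that for any $w\in\mathcal{R}_n$, the families $\{a^i w\}_{i\ge 0}$ and $\{a^{-i}w\}_{i\ge 0}$ generate the same cyclic ideal $\mathcal{R}_n w$, since $a$ is a unit in $\mathcal{R}_n$. Combined with the above catalogue, this shows that the row spans of $G_{I,1}$ and $G_{I,2}$ coincide and equal a cyclic subcode $C_{\mathrm{top}}\subset\mathcal{R}_n$; since both matrices have the same number of rows, equal to $\dim C_{\mathrm{top}}$, they are two bases of $C_{\mathrm{top}}$, which forces $G_{I,2}=S_1 G_{I,1}$ with $S_1$ invertible. An entirely parallel argument handles the bottom block, where the relevant generator $w$ is $F_{j,1}(a)G_{j,1}(a)(au(a+a^{-1})-v(a+a^{-1}))e_{f_j}(a)$ from Lemma \ref{lem:10} when $\zeta(n)<j\le r$, the pair $u(a)e_{f_j}(a)$, $v(a)e_{f_j^*}(a)$ from Lemma \ref{lem:8} when $j>r$, and simply $e_{f_j}(a)$ when $j\le\zeta(n)$; again the left and right halves of the two-sided rows span a common cyclic code $C_{\mathrm{bot}}\subset\mathcal{R}_n$, yielding an invertible $S_2$ with $G_{I,4}=S_2 G_{I,3}$.

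The main obstacle will be essentially bookkeeping, not a new idea: one must faithfully track six subcases (three ranges of $j$ crossed with which of $J_1,\dots,J_4$ contains $j$) and confirm in each that every row output by the corollaries falls into the correct pile and that its nonzero halves match the $\psi(a^{\pm i}w)$ template. A small subtlety to justify along the way, using $u(\alpha_j)\ne 0$ and $v(\alpha_j^{-1})\ne 0$ from Lemma \ref{lem:8}, is that multiplication by $u(a)$ or $v(a)$ acts as a bijection on $\mathcal{R}_n e_{f_j}(a)$ and $\mathcal{R}_n e_{f_j^*}(a)$ respectively, so that the cyclic code generated by the $J_4$ data in the range $j>r$ is genuinely $\mathcal{R}_n e_{f_j}(a)\oplus\mathcal{R}_n e_{f_j^*}(a)$. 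Beyond this case analysis and the single observation about $a^i w$ versus $a^{-i}w$, no further ingredients are needed.
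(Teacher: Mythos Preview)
Your proposal is correct and follows essentially the same route as the paper: both arguments read off the block shapes of each $\textbf{G}_{\overline{B_j}}$ from Corollaries~\ref{col:3}, \ref{col:5}, \ref{col:7}, separate the one-sided rows from the two-sided ones, and then observe that the left and right halves span the same cyclic code because the families $\{a^i w\}$ and $\{a^{-i}w\}$ generate the same ideal of $\mathcal{R}_n$ (the paper phrases this as ``$T_{j,2}$ is obtained from $T_{j,1}$ by row permutations, cyclic shifts, and scalar multiplication''). Your write-up is simply more explicit about the case bookkeeping and about the $j>r$, $j\in J_4$ subtlety than the paper's brief proof.
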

\begin{proof}
	Indeed, corollaries \ref{col:3}, \ref{col:5}, \ref{col:7} imply that for any $j$ the matrix $\textbf{G}_{\overline{B_j}}$  can be represented as
	\[ 
		\left( \begin{array}{c|c}
		T_{j, 1} & 0 \\ 0 & T_{j, 2}
		 \end{array}  \right), \quad
		 \left( \begin{array}{c|c}
		 T_{j, 1} & T_{j, 2}
		 \end{array}  \right).
	\]
	by rearranging the rows. In addition, since $T_{j, 2}$ can be obtained from $T_{j, 1}$ by rearranging the rows, by using cyclic shifts of the rows (see corollaries \ref{col:5}, \ref{col:7}), and by multiplying the rows by the elements of $\mathbb{F}_q$, it follows that $T_{j, 2}$, $T_{j, 2}$ are generating matrices of some cyclic code. This concludes the proof of the corollary.
\end{proof}

\section{Exterior and interior induced codes} \label{sec:ind}
Let $G$ be a finite group, let $H$ be a subgroup of $G$.  Given a $H$--code, there is a natural way to construct a $G$--code. Let $I$ be a left ideal of $\mathbb{F}_qH$, the $G$--code $I^G := (\mathbb{F}_qG)I$ is called\textit{ $H$--induced} \cite{zimmerman, DeuKos15}.

Let $\mathcal{T}$ be a right transveral for $H$ in $G$. Recall, $\mathcal{T}$ is a right transversal for $H$ in $G$ if and only if every right coset of $H$ contains exactly one element of $\mathcal T$. In \cite{zimmerman, DeuKos15} it was proved that if $I$ is an $[|H|, k, d]$--code then $J$ is an $[|G|, |\mathcal T| k, d]$--code. In addition, if a set $S$ is a basis of $I$ then $\mathcal T S$ is a basis of $J$. 
Let $i \in \mathbb{F}_qH$ be a principal element if $I$, i.e. $I = (\mathbb{F}_qH)i$. Since $J = (\mathbb{F}_qG)I = (\mathbb{F}_qG)(\mathbb{F}_qH)i$, it follows that $i$ is also a principal element of $J$.

By $\langle a \rangle_{D_{n}}$ we denote a cyclic subgroup of $D_{n}$ generated by the element $a \in D_{n}$. Below, by induced codes we mean $\langle a \rangle_{D_{n}}$--induced $D_n$--codes. In this section, given a $D_n$--code, we consider two important induced codes and describe their properties. It allows us to establish the connection between cyclic and dihedral codes.

In \cite{VedDeu18}, theorem 5, the $\mathbb{F}_q$--linear map $\mathtt{pr}_a: \mathbb{F}_qD_{n} \to \mathbb{F}_q \langle a \rangle_{D_{n}}$ was defined as
\[ \mathtt{pr}_a: P(a) + bQ(a) \mapsto P(a).  \]
Note that for an element $t = P(a) + bQ(a) \in \mathbb{F}_qD_{n}$ the following formula holds
\begin{equation} \label{eq:pr}
	t = \mathtt{pr}_a(t) + b\mathtt{pr}_a(bt).
\end{equation}
In \cite{VedDeu18} it was proved that for any code $C \subset \mathbb{F}_qD_{n}$ the image $\mathtt{pr}_a(C)$ is an ideal of $\mathbb{F}_q\langle a \rangle_{D_{n}}$, i.e. a cyclic code.

Define the exterior induced code $C_{ext}$ as
\[ C_{ext} := \left(\mathtt{pr}_a(C)\right)^{D_n} = \mathbb{F}_qD_{n} \mathtt{pr}_a(C)  \]
In \cite{VedDeu18} it was also proved that $C \subset C_{ext}$. 
\begin{Lemma}
	Let $C \subset \mathbb{F}_qD_{n}$ be a code, let $T$ be an ideal of $\mathbb{F}_q\langle a \rangle_{D_{n}}$. If $C \subset (\mathbb{F}_qD_{n})T$, then $\mathtt{pr}_a(C) \subset T$ and $C_{ext} \subset (\mathbb{F}_qD_{n})T$.
\end{Lemma}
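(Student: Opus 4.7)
The plan is to reduce both claims to a single observation about how $\mathtt{pr}_a$ interacts with products $u\cdot t$ when $t$ lies in the cyclic subalgebra $\mathbb{F}_q\langle a\rangle_{D_n}$. Once this is established, both inclusions follow by elementary manipulations together with the fact that $T$ is an ideal.

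First I would establish the key identity: for any $u = P(a) + bQ(a) \in \mathbb{F}_qD_n$ and any $t \in \mathbb{F}_q\langle a\rangle_{D_n}$, one has
\[
u\cdot t = P(a)t + bQ(a)t,
\]
where $P(a)t$ and $Q(a)t$ both lie in $\mathbb{F}_q\langle a\rangle_{D_n}$ (because $\mathbb{F}_q\langle a\rangle_{D_n}$ is a subalgebra containing both factors). By the defining formula of $\mathtt{pr}_a$, this gives $\mathtt{pr}_a(u\cdot t) = P(a)t$. Since $t\in T$ and $T$ is an ideal of the commutative algebra $\mathbb{F}_q\langle a\rangle_{D_n}$, we get $P(a)t \in T$. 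Extending by $\mathbb{F}_q$--linearity over sums $\sum_i u_i t_i$ with $t_i \in T$, this yields the containment
\[
\mathtt{pr}_a\bigl((\mathbb{F}_qD_n)T\bigr) \subset T.
\]

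Next, since by hypothesis $C \subset (\mathbb{F}_qD_n)T$, applying $\mathtt{pr}_a$ to both sides and invoking the containment above gives $\mathtt{pr}_a(C) \subset T$ at once. This is the first assertion.

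For the second assertion, I would simply multiply by $\mathbb{F}_qD_n$ on the left: using $\mathtt{pr}_a(C) \subset T$, we obtain
\[
C_{ext} = (\mathbb{F}_qD_n)\,\mathtt{pr}_a(C) \subset (\mathbb{F}_qD_n)\,T,
\]
which is exactly what was required. There is no real obstacle here; the only step that demands a little care is the verification that $\mathtt{pr}_a$ is ``compatible'' with right-multiplication by elements of $\mathbb{F}_q\langle a\rangle_{D_n}$, which comes down to the fact that multiplying $P(a)+bQ(a)$ by a polynomial in $a$ does not mix the ``$a$-part'' and the ``$b$-part,'' together with commutativity (and hence two-sidedness) of ideals inside $\mathbb{F}_q\langle a\rangle_{D_n}$.
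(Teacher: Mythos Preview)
Your proof is correct and follows essentially the same approach as the paper: both arguments compute $\mathtt{pr}_a\bigl((\mathbb{F}_qD_n)T\bigr)\subset T$ from the explicit form $u\cdot t = P(a)t + bQ(a)t$, deduce $\mathtt{pr}_a(C)\subset T$, and then obtain $C_{ext}=(\mathbb{F}_qD_n)\mathtt{pr}_a(C)\subset(\mathbb{F}_qD_n)T$ by left-multiplying. The only cosmetic difference is that the paper states the slightly stronger equality $\mathtt{pr}_a((\mathbb{F}_qD_n)T)=T$, which holds since $T\subset(\mathbb{F}_qD_n)T$ and $\mathtt{pr}_a$ is the identity on $\mathbb{F}_q\langle a\rangle_{D_n}$.
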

\begin{proof}
	Using
	\[ (\mathbb{F}_qD_{n})T = \left\{ P(a)R(a) + bQ(a) R(a) \mid P(a) + bQ(a) \in \mathbb{F}_qD_{n}, \; R(a) \in T  \right\}, \] 
	we see that $\mathtt{pr}_a((\mathbb{F}_qD_{n})T) = T$. Hence $C \subset (\mathbb{F}_qD_{n})T$ implies that $\mathtt{pr}_a(C) \subset T$. Further, since $C_{ext}=\mathbb{F}_qD_{n} \mathtt{pr}_a(C)$, it follows that $C_{ext} \subset (\mathbb{F}_qD_{n})T$.
\end{proof}

Hence $C_{ext}$ is the smallest $\langle a \rangle_{D_{n}}$--induced $D_n$--code that contains $C$. \textit{$C_{ext}$ is called the exterior induced code of $C$.} 

In \cite{VedDeu18}, p. 240, for given a code $C \subset \mathbb{F}_qD_{n}$, the code $C_{int} := \mathbb{F}_qD_{n}(C \cap \mathbb{F}_q\langle a \rangle_{D_{n}})$ was defined. It was also shown that $C_{int} \subset C$. Clearly, $C_{int}$ is the largest $\langle a \rangle_{D_{n}}$--induced $D_n$--code that is contained in $C$. \textit{$C_{int}$ is called the interior induced code of $C$.}  

So, we have
\begin{equation} \label{eq:chain}
C_{int} \subset C \subset C_{ext}.
\end{equation}
Note that $C \subset \mathbb{F}_qD_{n}$ is $\langle a \rangle_{D_{n}}$--induced if and only if $C_{int} = C = C_{ext}$.

In the following theorem we describe the image of an exterior code under (\ref{eq:2}). To do this, we need the following lemma.
\begin{Lemma} \label{lem:11}
	Let $C \subset \mathbb{F}_qD_{n}$ be a dihedral code generated by an element $u = P(a) + bQ(a)$, i.e. $C = (\mathbb{F}_qD_{n})u$. Then
	\[ 
		C_{ext} = \{ t_1 P(a) + t_2 Q(a) \mid t_1, t_2 \in \mathbb{F}_qD_{n}  \}. 
	\]
	In particular, if $Q(a) = 0$, then $C_{ext} = C$.
\end{Lemma}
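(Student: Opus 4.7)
The plan is to unravel the definition $C_{ext}=\mathbb{F}_qD_n\cdot\mathtt{pr}_a(C)$ by first computing $\mathtt{pr}_a(C)$ explicitly, and then multiplying the result by $\mathbb{F}_qD_n$. Since $C=(\mathbb{F}_qD_n)u$ is generated by the single element $u=P(a)+bQ(a)$, every element of $C$ has the form $(S(a)+bT(a))u$ for some polynomials $S,T$ of degree less than $n$.

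First I would carry out this multiplication using the commutation rule $bS(a)=S(a^{-1})b$ (which follows from $ba^i=a^{-i}b$, see (\ref{eq:dih})). A short calculation gives
\[
(S(a)+bT(a))(P(a)+bQ(a))=\bigl[S(a)P(a)+T(a^{-1})Q(a)\bigr]+b\bigl[T(a)P(a)+S(a^{-1})Q(a)\bigr].
\]
Applying $\mathtt{pr}_a$ picks out the first bracket, so $\mathtt{pr}_a(C)=\{S(a)P(a)+T(a^{-1})Q(a)\mid S,T\in\mathbb{F}_q[x],\ \deg S,\deg T<n\}$. Next I would observe that, as $T$ ranges over all polynomials of degree less than $n$, the element $T(a^{-1})$ ranges over all of $\mathbb{F}_q\langle a\rangle_{D_n}$ (the substitution $x\mapsto x^{-1}$ is an automorphism of $\mathbb{F}_q\langle a\rangle_{D_n}$). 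The same holds for $S(a)$. Therefore
\[
\mathtt{pr}_a(C)=\mathbb{F}_q\langle a\rangle_{D_n}P(a)+\mathbb{F}_q\langle a\rangle_{D_n}Q(a).
\]

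The third step is simply to multiply by $\mathbb{F}_qD_n$ on the left. Since $\mathbb{F}_q\langle a\rangle_{D_n}\subset\mathbb{F}_qD_n$, and since $\mathbb{F}_qD_n$ is a ring, we obtain
\[
C_{ext}=\mathbb{F}_qD_n\cdot\mathtt{pr}_a(C)=\mathbb{F}_qD_n\,P(a)+\mathbb{F}_qD_n\,Q(a)=\{t_1P(a)+t_2Q(a)\mid t_1,t_2\in\mathbb{F}_qD_n\},
\]
which is the claimed formula. For the special case $Q(a)=0$, the right-hand side collapses to $\mathbb{F}_qD_n\,P(a)=(\mathbb{F}_qD_n)u=C$, so $C_{ext}=C$ follows immediately.

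The only slightly delicate point is the bookkeeping with the antihomomorphism $a\mapsto a^{-1}$ inside the $b$-part of the computation — one must be careful to get $T(a^{-1})$ (not $T(a)$) in the first bracket, and then to notice that this does not shrink the set because $T\mapsto T(a^{-1})$ is a bijection of $\mathbb{F}_q\langle a\rangle_{D_n}$. Everything else is a direct application of the definitions of $\mathtt{pr}_a$ and of the induced code.
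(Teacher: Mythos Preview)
Your proof is correct and follows essentially the same approach as the paper: both compute the product $(S(a)+bT(a))(P(a)+bQ(a))$, apply $\mathtt{pr}_a$ to extract the first bracket, and then use that the map $T(a)\mapsto T(a^{-1})$ is a bijection on $\mathbb{F}_q\langle a\rangle_{D_n}$ to conclude $\mathtt{pr}_a(C)=\mathbb{F}_q\langle a\rangle_{D_n}P(a)+\mathbb{F}_q\langle a\rangle_{D_n}Q(a)$, from which the result follows after multiplying by $\mathbb{F}_qD_n$. The paper phrases the last step slightly differently (observing $P(a),Q(a)\in\mathtt{pr}_a(C)$), but the argument is the same.
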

\begin{proof}
	Using (\ref{eq:pq}), we get
	\[ \begin{split}
	C = (\mathbb{F}_qD_{n})u = \\ =
	\left\{ \left( P_1(a)P(a) + Q_1(a^{-1})Q(a) \right) + 
	b \left( P_1(a^{-1})Q(a) + Q_1(a)P(a) \right) \mid
	P_1(a) + b Q_1(a) \in \mathbb{F}_qD_{n}  \right\}.
	\end{split}
	\]
	It follows that
	\[  \mathtt{pr}_a(C) = 
	\left\{ P_1(a)P(a) + Q_1(a^{-1})Q(a) \mid
	P_1(a), Q_1(a^{-1}) \in \mathbb{F}_q \langle a \rangle_{D_{n}}  \right\}.
	\]
	Hence $P(a), Q(a) \in \mathtt{pr}_a(C)$. This implies that
	$C_{ext} = (\mathbb{F}_qD_{n})\mathtt{pr}_a(C) = \{ t_1 P(a) + t_2 Q(a) \mid t_1, t_2 \in \mathbb{F}_qD_{n}  \}$.
	The lemma is proved.
\end{proof}

\begin{theorem} \label{theor:8}
	Let $\gcd(q, n) = 1$, let $C \subset \mathbb{F}_qD_{n}$ be a dihedral code, and let
	\[
	\mathcal{P}(C) = \bigoplus_{j=1}^{r+s} B_j, \quad
	B_j = \begin{cases}
	A_j, & j \in J_1 \\
	I_j(0, 1), & j \in J_2 \\
	I_j(1, 0), & j \in J_3 \\
	I_j(-x_j, 1) & j \in J_4 \\			
	0, & j \notin J_1 \cup J_2	\cup J_3 \cup J_4
	\end{cases},
	\]
	(see (\ref{eq:form}) -- (\ref{eq:15})). Then
	\[
	\mathcal{P}(C_{ext}) = \bigoplus_{j=1}^{r+s} B_j^{ext}, \quad
	B_j^{ext} = \begin{cases}
	A_j, & j \in K_1 \\
	I_j(0, 1) & j \in K_2 \\
	I_j(1, 0), & j \in K_3 \\		
	0, & j \notin K_1 \cup K_2	\cup K_3
	\end{cases},
	\]
	where
	\[ K_1 = J_1 \cup J_4 \cup \{ j \in J_2 \cup J_3 \mid 1 \leq j \leq r \}, \]
	\[ K_2 = \{ j \in J_2 \mid r+1 \leq j \leq r+s \}, \]
	\[ K_3 = \{ j \in J_3 \mid r+1 \leq j \leq r+s \}. \]
\end{theorem}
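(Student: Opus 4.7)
The plan is to exploit the direct-sum decomposition $C = \bigoplus_{j \in J_1 \cup J_2 \cup J_3 \cup J_4} \overline{B_j}$ supplied by Theorem \ref{theor:codes}. Since $\mathtt{pr}_a$ is $\mathbb{F}_q$-linear, $\mathtt{pr}_a(C) = \sum_j \mathtt{pr}_a(\overline{B_j})$, so
\[ C_{ext} = \sum_j \mathbb{F}_qD_n \cdot \mathtt{pr}_a(\overline{B_j}), \qquad B_k^{ext} = \sum_j A_k \cdot \mathcal{P}_k\bigl(\mathtt{pr}_a(\overline{B_j})\bigr). \]
The first observation I would establish is that only the diagonal contributions ($k = j$) are nonzero. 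Every basis vector of $\overline{B_j}$ produced by Lemmas \ref{lem:7}, \ref{lem:10}, and \ref{lem:8} is a right multiple of $e_{f_j}(a)$ (when $1 \leq j \leq r$) or of $e_{f_j}(a)$ or $e_{f_j^*}(a)$ (when $r+1 \leq j \leq r+s$). Since $\mathtt{pr}_a(u R(a)) = \mathtt{pr}_a(u)\, R(a)$ for every $R(a) \in \mathbb{F}_q\langle a\rangle_{D_n}$, the projection $\mathtt{pr}_a(\overline{B_j})$ therefore lies in the corresponding cyclic ideal. The orthogonality relation (\ref{eq:orth}) gives $e_{f_j}(\alpha_k^{\pm 1}) = e_{f_j^*}(\alpha_k^{\pm 1}) = 0$ whenever $k \neq j$, so $\mathcal{P}_k$ annihilates $\mathtt{pr}_a(\overline{B_j})$ for $k \neq j$, reducing the problem to $B_j^{ext} = A_j \cdot \mathcal{P}_j\bigl(\mathtt{pr}_a(\overline{B_j})\bigr)$.

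The second step is a case analysis, relying on the identities $\mathtt{pr}_a(P(a) + bQ(a)) = P(a)$ and $b\cdot R(a) = R(a^{-1})\, b$ to compute projections of basis vectors. For $j \in J_1$ the inclusion $C \subset C_{ext}$ yields $B_j^{ext} = A_j$. For $1 \leq j \leq r$ and $j \in J_2 \cup J_3 \cup J_4$, Lemmas \ref{lem:7} and \ref{lem:10} furnish an element of the form $R(a) e_{f_j}(a) \in \mathtt{pr}_a(\overline{B_j})$ with $R(\alpha_j), R(\alpha_j^{-1}) \neq 0$ (in the case $\zeta(n)+1 \leq j \leq r$ this uses the fact that $\alpha_j \notin \mathbb{F}_q[\alpha_j+\alpha_j^{-1}]$, so the factor $\alpha_j u(\alpha_j+\alpha_j^{-1}) - v(\alpha_j+\alpha_j^{-1})$ cannot vanish whenever $(u(\alpha_j+\alpha_j^{-1}), v(\alpha_j+\alpha_j^{-1})) \neq (0,0)$); Remark \ref{remark:7} then gives that $\mathcal{P}_j(R(a) e_{f_j}(a))$ is invertible in $A_j$, so $B_j^{ext} = A_j$. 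For $r+1 \leq j \leq r+s$ and $j \in J_4$, Lemma \ref{lem:8} shows that $\mathtt{pr}_a(\overline{B_j})$ contains both $u(a) e_{f_j}(a)$ and $v(a) e_{f_j^*}(a)$ with $u(\alpha_j) \neq 0$ and $v(\alpha_j^{-1}) \neq 0$, so Remark \ref{remark:8} again yields $B_j^{ext} = A_j$. Finally, for $r+1 \leq j \leq r+s$ and $B_j = I_j(0,1)$, Lemma \ref{lem:8} (taken with $u = 1$, $v = 0$) gives a basis $\{a^i b^k e_{f_j}(a)\}$; only the $k = 0$ vectors contribute to $\mathtt{pr}_a$, so $\mathtt{pr}_a(\overline{B_j})$ is spanned by $\{a^i e_{f_j}(a)\}$, and $\mathcal{P}_j$ maps these to rank-one diagonal matrices spanning $I_j(0,1)$. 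The case $B_j = I_j(1,0)$ is symmetric and yields $I_j(1,0)$. The case $j \notin J_1 \cup J_2 \cup J_3 \cup J_4$ is trivial because then $\overline{B_j} = 0$.

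The main technical subtlety will be the $b$-rewriting in the last two cases: although the basis of $\overline{I_j(0,1)}$ contains the ``$b$-part'' vectors $a^i b e_{f_j}(a) = a^i e_{f_j^*}(a)\, b$, these vanish under $\mathtt{pr}_a$, and this is precisely what prevents any $e_{f_j^*}$-component from appearing in the projection and forces $B_j^{ext}$ to be the proper ideal $I_j(0,1)$ rather than all of $A_j$. Once this is verified, reading off $K_1$, $K_2$, $K_3$ from the case distinction completes the proof.
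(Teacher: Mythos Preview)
Your proposal is correct and follows essentially the same route as the paper. The paper packages the computation of $\mathtt{pr}_a$ on a principal ideal into a separate Lemma~\ref{lem:11} (if $\overline{B_j}=(\mathbb{F}_qD_n)(P(a)+bQ(a))$ then $(\overline{B_j})_{ext}=(\mathbb{F}_qD_n)P(a)+(\mathbb{F}_qD_n)Q(a)$) and then runs the identical three-case analysis via Lemmas~\ref{lem:7}, \ref{lem:10}, \ref{lem:8} and Remarks~\ref{remark:7}, \ref{remark:8}; you instead compute $\mathtt{pr}_a$ directly on the bases supplied by those lemmas, which amounts to the same thing. If anything, your reduction $B_k^{ext}=\sum_j A_k\cdot\mathcal{P}_k(\mathtt{pr}_a(\overline{B_j}))$ together with the orthogonality argument is more explicit than the paper's one-line ``since $C$ is a direct sum of $\overline{B_j}$, the theorem is proved.''
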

\begin{proof}
	Let us consider several cases. 
	
	Case 1: $1 \leq j \leq \zeta(n)$.  Using Lemma \ref{lem:7}, we obtain
	\[ \overline{ A_j } = (\mathbb{F}_qD_{n})e_{f_j}(a), \quad
	\overline{ I_j(1, 0) } = (\mathbb{F}_qD_{n})\left( e_{f_j}(a) + b e_{f_j}(a) \right), \quad
	\overline{ I_j(0, 1) } = (\mathbb{F}_qD_{n})\left( e_{f_j}(a) - be_{f_j}(a) \right).	 	
	\]
	Consequentially, using Lemma \ref{lem:11}, we find that
	\[ 
	\left(\overline{ A_j }\right)_{ext} = (\mathbb{F}_qD_{n})e_{f_j}(a) = \overline{A_j},
	\quad
	\left(\overline{ I_j(1, 0) }\right)_{ext} = \overline{A_j}, \quad
	\left(\overline{ I_j(0, 1) }\right)_{ext} = \overline{A_j}.
	\]
	
	Case 2: $\zeta(n) + 1 \leq j \leq r$. Lemma \ref{lem:10} implies that
	\[ \begin{split}
		\overline{ A_j } = (\mathbb{F}_qD_{n})e_{f_j}(a), \quad
	   \overline{I_j\left(u(\alpha_j + \alpha_j^{-1}), v(\alpha_j + \alpha_j^{-1})\right)} = (\mathbb{F}_qD_{n})(1 + b) P(a), \\
	   P(a) = F_{j,1}(a)G_{j,1}(a) \left( au(a+a^{-1}) - v(a+a^{-1}) \right) e_{f_j}(a),
	   \end{split}
	 \]
	 for any polynomials $u, v \in \mathbb{F}_q[x]$ such that $\left( u(\alpha_j + \alpha_j^{-1}), v(\alpha_j + \alpha_j^{-1}) \right) \neq (0, 0)$.
	Hence using Lemma \ref{lem:11}, we obtain
	\[ \left( \overline{ A_j }\right)_{ext} = (\mathbb{F}_qD_{n})e_{f_j}(a) = \overline{A_j}, \quad
	 \left(\overline{I_j\left(u(\alpha_j + \alpha_j^{-1}), v(\alpha_j + \alpha_j^{-1})\right)}\right)_{ext} = (\mathbb{F}_qD_{n}) P(a) = \overline{A_j},
	\]
	(see Remark \ref{remark:7}). In particular,
	$
		\left(\overline{I_j\left(0, 1 \right)}\right)_{ext} = \left(\overline{I_j\left(1, 0 \right)}\right)_{ext} = 
		\left(\overline{I_j\left(-x_j, 1 \right)}\right)_{ext} =
		\overline{A_j}.
	$
	
	Case 3: $r+1 \leq j \leq r+s$. Lemma \ref{lem:8} implies that 
	\[ \overline{ A_j } = (\mathbb{F}_qD_{n}) \left( e_{f_j}(a) + e_{f_j^*}(a) \right), \quad
		\overline{I_j(-v(\alpha_j^{-1}), u(\alpha_j))})	=
		(\mathbb{F}_qD_{n}) \left( u(a)e_{f_j}(a) + bv(a)e_{f_j^*}(a) \right)
	\] 
	for any polynomials $u, v \in \mathbb{F}_q[x]$ such that $\left( u(\alpha_j), v(\alpha_j^{-1}) \right) \neq (0, 0)$. So, using Lemma \ref{lem:11}, we obtain
	\[ 
		\left( \overline{ A_j }\right)_{ext} = (\mathbb{F}_qD_{n}) \left( e_{f_j}(a) + e_{f_j^*}(a) \right) = \overline{ A_j },
	\] \[
		\left( \overline{I_j(-v(\alpha_j^{-1}), u(\alpha_j))} \right)_{ext} = 
	\{ t_1 u(a) e_{f_j}(a) + t_2 v(a) e_{f_j^*}(a) \mid t_1, t_2 \in \mathbb{F}_qD_{n}  \}, \]
	Hence
	\[ \left( \overline{I_j(1, 0)}\right)_{ext} = \overline{I_j(1, 0)}, \quad
		\left( \overline{I_j(0, 1)}\right)_{ext} = \overline{I_j(0, 1)}, \quad 
		\left( \overline{I_j(-x_j, 1)} \right)_{ext} = A_j,
	\]
	(see Remark \ref{remark:8}).
	
	Therefore, since $C$ is a direct sum of $\overline{B_j}$, the theorem is proved. 
\end{proof}

Now we consider the interior induced codes and the their images under (\ref{eq:2}).
\begin{theorem} \label{theor:9}
	Let $\gcd(q, n) = 1$, let $C \subset \mathbb{F}_qD_{n}$ be a dihedral code, and let
	\[
	\mathcal{P}(C) = \bigoplus_{j=1}^{r+s} B_j, \quad
	B_j = \begin{cases}
	A_j, & j \in J_1 \\
	I_j(0, 1), & j \in J_2 \\
	I_j(1, 0), & j \in J_3 \\
	I_j(-x_j, 1) & j \in J_4 \\			
	0, & j \notin J_1 \cup J_2	\cup J_3 \cup J_4
	\end{cases}
	\]
	(see (\ref{eq:form}) -- (\ref{eq:15})). Then
	\[
	\mathcal{P}(C_{int}) = \bigoplus_{j=1}^{r+s} B_j^{int}, \quad
	B_j^{int} = \begin{cases}
	A_j, & j \in K_1 \\
	I_j(0, 1) & j \in K_2 \\
	I_j(1, 0), & j \in K_3 \\		
	0, & j \notin K_1 \cup K_2	\cup K_3
	\end{cases},
	\]
	where
	\[ K_1 = J_1, \quad
	 K_2 = \{ j \in J_2 \mid r+1 \leq j \leq r+s \}, \quad
	 K_3 = \{ j \in J_3 \mid r+1 \leq j \leq r+s \}. \]
\end{theorem}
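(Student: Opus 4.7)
The plan is to exploit the fact that $C_{int}$, by its very definition $C_{int}=\mathbb{F}_qD_n(C\cap\mathbb{F}_q\langle a\rangle_{D_n})$, is itself $\langle a\rangle_{D_n}$-induced, and in fact is the \emph{largest} such code contained in $C$. Indeed, if $J\subset C$ is any $\langle a\rangle_{D_n}$-induced code, then $J=\mathbb{F}_qD_n D$ for some cyclic code $D\subset\mathbb{F}_q\langle a\rangle_{D_n}$; the inclusions $D\subset J\subset C$ force $D\subset C\cap\mathbb{F}_q\langle a\rangle_{D_n}$, hence $J\subset C_{int}$. Since $\mathcal{P}$ is an algebra isomorphism, the problem therefore reduces to determining, componentwise, the largest ideal of $A_j$ of the form ``$j$-th component of an induced code'' that is contained in $B_j$.

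The second step is to classify those components. For $p(a)\in\mathbb{F}_q\langle a\rangle_{D_n}$, the elements $\mathcal{P}_j(p(a))$ are diagonal after conjugation: for $1\leq j\leq \zeta(n)$, $\gamma_j(p(a))=p(\pm 1)\in\mathbb{F}_q\cdot 1\subset A_j$; for $\zeta(n)+1\leq j\leq r+s$, $\tau_j(p(a))=\mathrm{diag}(p(\alpha_j),p(\alpha_j^{-1}))$. Using the idempotents $e_{f_j}(a)$ together with (\ref{eq:orth}), one checks that $\mathcal{P}_j(e_{f_j}(a))$ equals the identity of $A_j$ for $1\leq j\leq r$, while for $r+1\leq j\leq r+s$ one has $\tau_j(e_{f_j}(a))=\bigl(\begin{smallmatrix}1&0\\0&0\end{smallmatrix}\bigr)$ and $\tau_j(e_{f_j^*}(a))=\bigl(\begin{smallmatrix}0&0\\0&1\end{smallmatrix}\bigr)$. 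Taking left ideals generated by these elements in $A_j$ and comparing with (\ref{eq:Ij}), one concludes that an induced code has $j$-th component in
\[
\begin{cases}\{A_j,\,0\},& 1\leq j\leq r,\\ \{A_j,\,I_j(0,1),\,I_j(1,0),\,0\},& r+1\leq j\leq r+s,\end{cases}
\]
and that every such choice is realised by some cyclic generator (e.g.\ products of $e_{f_j}(a)$ and $e_{f_j^*}(a)$). This is the key technical step.

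The third step is simply a case analysis of $B_j$. For $j\in J_1$ ($B_j=A_j$), the maximum is $A_j$, giving $K_1\supset J_1$. For $j\in J_2$ or $J_3$ with $j\leq r$, neither $A_j$ nor any nonzero induced component fits inside the proper ideal $I_j(0,1)$ or $I_j(1,0)$ (these are not available in that range), so the maximum is $0$; for $j\geq r+1$ the codes $I_j(0,1)$ and $I_j(1,0)$ are themselves induced, giving $K_2$ and $K_3$ as claimed. For $j\in J_4$, the hypothesis $x_j\neq 0$ (together with $\zeta(n)+1\leq j$) makes $I_j(-x_j,1)$ strictly incomparable with the induced options: $A_j\not\subset I_j(-x_j,1)$, and a quick check using (\ref{eq:a1}) shows that $I_j(0,1)\cap I_j(-x_j,1)=I_j(1,0)\cap I_j(-x_j,1)=0$, so the maximum is again $0$. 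For $j$ outside $J_1\cup J_2\cup J_3\cup J_4$ we have $B_j=0$, trivially giving $B_j^{int}=0$.

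The only genuine obstacle is the left-ideal computation in step two, in particular verifying that the ideals $I_j(0,1)$ and $I_j(1,0)$ are exactly the left ideals of $A_j$ (for $r+1\leq j\leq r+s$) generated by the ``rank-one diagonal'' elements coming from $\mathbb{F}_q\langle a\rangle_{D_n}$, and that $I_j(-x_j,1)$ with $x_j\neq 0$ contains no nonzero ideal of that form; after this is established, the proof amounts to assembling the componentwise conclusions via $\mathcal{P}(C_{int})=\bigoplus_j B_j^{int}$.
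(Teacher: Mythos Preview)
Your argument is correct, but it takes a different route from the paper's. The paper works directly from the definition $C_{int}=(\mathbb{F}_qD_n)\bigl(C\cap\mathbb{F}_q\langle a\rangle_{D_n}\bigr)$: applying $\mathcal{P}$ and distributing over the direct sum gives $\mathcal{P}(C_{int})=\bigoplus_j A_j\bigl(\mathcal{P}_j(\mathbb{F}_q\langle a\rangle_{D_n})\cap B_j\bigr)$, and the rest of the proof is a componentwise computation of the set $\mathcal{P}_j(\mathbb{F}_q\langle a\rangle_{D_n})$ and its intersection with each possible $B_j$. You instead invoke the \emph{maximality} characterisation of $C_{int}$ and first classify, for each $j$, exactly which left ideals of $A_j$ can occur as the $j$-th block of an induced code; the answer for $C_{int}$ is then read off as the largest such ideal contained in $B_j$. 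Both approaches hinge on the same diagonal calculation $\tau_j(p(a))=\mathrm{diag}(p(\alpha_j),p(\alpha_j^{-1}))$ and the behaviour of $e_{f_j}(a)$, $e_{f_j^*}(a)$ under $\tau_j$; the difference is that the paper intersects with $B_j$ \emph{before} taking the $A_j$-ideal, whereas you list the admissible $A_j$-ideals first and then compare with $B_j$. Your route is slightly more conceptual (and in effect anticipates Theorem~\ref{theor:pol}, which later in the paper parametrises induced codes by triples $K_1,K_2,K_3$), while the paper's route avoids having to verify that every combination of admissible blocks is simultaneously realised by a single cyclic generator. On that point, your parenthetical ``products of $e_{f_j}(a)$ and $e_{f_j^*}(a)$'' should read ``sums'' (or ``the idempotent $e_h(a)$ for suitable $h\mid x^n-1$''), since distinct $e_{f_j}(a)$ are orthogonal and their product vanishes; once that is fixed, the realisation step goes through and your argument is complete.
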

\begin{proof}
	Using (\ref{eq:2}), we see that
	\[ \mathcal{P}(C_{int}) = \mathcal{P}\left((\mathbb{F}_qD_{n})(\mathbb{F}_q\langle a \rangle_{D_{n}} \cap C) \right) =
	\mathcal{P}(\mathbb{F}_qD_{n})\left( 
	\mathcal{P}(\mathbb{F}_q \langle a \rangle_{D_{n}}) \cap \mathcal{P}(C)
	\right) =
	\bigoplus_{j=1}^{r+s} A_j \left( \mathcal{P}_j(\mathbb{F}_q \langle a \rangle_{D_{n}}) \cap B_j \right).
	\]	
	To prove the theorem, it remains to find $\mathcal{P}_j(\mathbb{F}_q \langle a \rangle_{D_{n}}) \cap B_j$ and $A_j \left( \mathcal{P}_j(\mathbb{F}_q \langle a \rangle_{D_{n}}) \cap B_j \right)$. 
	
	If $B_j=A_j$ or $B_j = 0$, then we obviously have
	\[
	 	\mathcal{P}_j(\mathbb{F}_q\langle a \rangle_{D_{n}}) \cap A_j = \mathcal{P}_j(\mathbb{F}_q\langle a \rangle_{D_{n}}),	\quad 
	 	\mathcal{P}_j(\mathbb{F}_q\langle a \rangle_{D_{n}}) \cap 0 = 0.	 	
	\]
	Since $1_{A_j} \in \mathcal{P}_j(\mathbb{F}_q\langle a \rangle_{D_{n}})$, it follows that
	\[
	A_j \left(\mathcal{P}_j(\mathbb{F}_q\langle a \rangle_{D_{n}}) \cap A_j \right) = A_j,	\quad 
	A_j \left(\mathcal{P}_j(\mathbb{F}_q\langle a \rangle_{D_{n}}) \cap 0 \right) = 0.
	\]
	
	Not let $B_j \neq A_j$, $B_j \neq 0$. In the proof we consider three cases.
	
	Case 1:  $1 \leq j \leq \zeta(n)$. By definition (\ref{eq:2}), we have
	$ \mathcal{P}_j(\mathbb{F}_q\langle a \rangle_{D_{n}}) = \tau_j (\mathbb{F}_q\langle a \rangle_{D_{n}}) = \{ t + 0h \mid t \in \mathbb{F}_q \}$. So, using (\ref{eq:a1}), we get
	\[ 
		\mathcal{P}_j(\mathbb{F}_q\langle a \rangle_{D_{n}}) \cap I_j(1, 0) = 
		\{ t + 0h \mid t \in \mathbb{F}_q \} \cap
		\{ t + th \mid t \in \mathbb{F}_q \} = 0.
	\]
	Similarly, $\mathcal{P}_j(\mathbb{F}_q\langle a \rangle_{D_{n}}) \cap I_j(0, 1) = 0$.
	
	Case 2: $\zeta(n) + 1 \leq j \leq r$. Lemma \ref{lem:10} implies that for any $u(x), v(x) \in \mathbb{F}_q[x]$ such that
	\[ (v(\alpha_j+\alpha_j^{-1}), u(\alpha_j+\alpha_j^{-1})) \neq (0, 0),\] 
	an arbitrary element $w$ of $\overline{I_j\left(u(\alpha_j + \alpha_j^{-1}), v(\alpha_j + \alpha_j^{-1})\right)}$ can be represented as
	\[ w = t(a)(1+b)T(a) = t(a)T(a) + bt(a^{-1})T(a), \quad	
		T(a) = F_{j,1}(a)G_{j,1}(a) \left( au(a+a^{-1}) - v(a+a^{-1}) \right) e_{f_j}(a),	
	\]
	where $t(x) \in \mathbb{F}_q[x]$, $\deg(t) < \deg(f_j)$. Assuming that $w \in \mathbb{F}_q\langle a \rangle_{D_{n}}$, we obtain $t(a^{-1})T(a) = 0$. So, since $T(\alpha_j) \neq 0$ (see Remark \ref{remark:7}), it follows that $t(\alpha_j^{-1}) = 0$. Since $f_j(x) = f_j^*(x)$ is the minimal polynomial of $\alpha_j$ and $\alpha_j^{-1}$, we conclude that $t(\alpha_j) = 0$.
	Using the definition of $\tau_j$ (see (\ref{eq:2})), we get
	\[ \tau_j(w) = \begin{pmatrix} 
		t(\alpha_j)T(\alpha_j) &  t(\alpha_j)T(\alpha_j^{-1}) \\
		t(\alpha_j^{-1})T(\alpha_j) & t(\alpha_j^{-1})T(\alpha_j^{-1})
		\end{pmatrix} =	0 \]
	So, since $\mathcal{P}_j = \sigma_j \tau_j$, it follows that $\mathcal{P}_j(\mathbb{F}_q\langle a \rangle_{D_{n}}) \cap I_j\left(u(\alpha_j + \alpha_j^{-1}), v(\alpha_j + \alpha_j^{-1})\right) = 0$.

	Case 3: $r + 1 \leq j \leq r+s$. Using the definition of $\mathcal{P}_j$ ($= \tau_j$) (see (\ref{eq:2})), we obtain
	\[ \begin{split}
	 \tau_j(\mathbb{F}_q \langle a \rangle_{D_{n}}) = \left\{ 
		\begin{pmatrix}
			P(\alpha_j) & 0 \\ 0 & P(\alpha_j^{-1})
		\end{pmatrix} 
		\mathlarger \mid P(x) \in \mathbb{F}_q[x]
	 \right\} \subset
	 \left\{ 
	 		\begin{pmatrix}
	 			M(\alpha_j) & 0 \\ 0 & Q(\alpha_j^{-1})
	 		\end{pmatrix}  
	 		\mathlarger \mid M(x), Q(x) \in \mathbb{F}_q[x]
	 \right\} \subset \\
	 \subset \tau_j(\mathbb{F}_q \langle a \rangle_{D_{n}}).
	 \end{split} \]
	 Note that the last inclusion is obtained by claim 3) of Theorem \ref{theor:3} (see (\ref{eq:teor3:1})). Indeed, $\tau_j \epsilon_j = \mathrm{id}_{A_j}$ and
	 \[ \epsilon_j \begin{pmatrix}
	 M(\alpha_j) & 0 \\ 0 & Q(\alpha_j^{-1})
	 \end{pmatrix} \in \mathbb{F}_q\langle a \rangle_{D_{n}}. \]
	 So, using definition (\ref{eq:a1}), we see that
	 \[ \mathcal{P}_j(\mathbb{F}_q \langle a \rangle_{D_{n}}) \cap I_j(0, 1) = 
	 	\left\{ 
	 	\begin{pmatrix}
	 	P(\alpha_j) & 0 \\ 0 & 0
	 	\end{pmatrix}  \mathlarger \mid P(x) \in \mathbb{F}_q[x]
	 	\right\},
	 \] \[	 	
	 	\mathcal{P}_j(\mathbb{F}_q \langle a \rangle_{D_{n}}) \cap I_j(1, 0) = 
	 	\left\{ 
	 	\begin{pmatrix}
	 	0 & 0 \\ 0 & Q(\alpha_j^{-1})
	 	\end{pmatrix}  \mathlarger \mid Q(x) \in \mathbb{F}_q[x]
	 	\right\}, \quad
	 \]
	 and $ \mathcal{P}_j(\mathbb{F}_q \langle a \rangle_{D_{n}}) \cap I_j(x, y) = \{ 0 \} $ for any $x, y \in \mathbb{F}_q[\alpha_j]$ such that $x \neq 0$, $y \neq 0$. 
	 Hence using (\ref{eq:Ij}), we see that
	 \[ A_j \left( \mathcal{P}_j(\mathbb{F}_q \langle a \rangle_{D_{n}}) \cap I_j(0, 1) \right) = I_j(0, 1), \quad A_j \left( \mathcal{P}_j(\mathbb{F}_q \langle a \rangle_{D_{n}}) \cap I_j(1, 0) \right) = I_j(1, 0).  \] 
	 This concludes the proof of the theorem.
\end{proof}

\begin{corollary} \label{col:mind}
	Corollary \ref{col:matr} implies that a generating matrix of a code $C$ can be represented as
	\[
	\left( \begin{array}{c|c}
	G_{I, 1} & 0 \\
	0 & G_{I, 2} \\
	\cline{1-2}
	G_{I, 3} & G_{I, 4}
	\end{array}	\right).
	\]
	Note that theorems \ref{theor:8}, \ref{theor:9} and corollary \ref{col:matr} imply that a generating matrix of $C_{ext}$ can be represented as
	\[
	\left( \begin{array}{c|c}
	G_{I, 1} & 0 \\
	0 & G_{I, 2} \\
	\cline{1-2}
	G_{I, 3} & 0 \\
	0 &  G_{I, 4}
	\end{array}	\right),
	\]
	and a generating matrix of $C_{int}$ can be represented as
	\[
	\left( \begin{array}{c|c}
	G_{I, 1} & 0 \\
	0 & G_{I, 2} 
	\end{array}	\right).
	\]
\end{corollary}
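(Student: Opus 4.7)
The plan is to combine Theorems \ref{theor:8} and \ref{theor:9}, which identify the Wedderburn summands appearing in $\mathcal{P}(C_{ext})$ and $\mathcal{P}(C_{int})$, with the explicit generating submatrices from Corollaries \ref{col:3}, \ref{col:5}, \ref{col:7}. The key observation behind Corollary \ref{col:matr} is that the mixed block $(G_{I,3} \mid G_{I,4})$ of $C$'s generating matrix arises exactly from the summands $B_j$ of the form $I_j(0,1)$ or $I_j(1,0)$ with $1 \le j \le r$, together with all $I_j(-x_j,1)$. In $C_{int}$ each such summand is suppressed, while in $C_{ext}$ each such summand is promoted to the full matrix algebra $A_j$; so in both cases the resulting generating matrix is purely block-diagonal.

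For $C_{int}$ I would simply apply Corollary \ref{col:matr} to $C_{int}$: by Theorem \ref{theor:9} the surviving summands are $A_j$ for $j \in J_1$ and $I_j(0,1), I_j(1,0)$ for $r+1 \le j \le r+s$, each of which contributes only rows of the split shape $(T_{j,1} \mid 0)$ or $(0 \mid T_{j,2})$. Assembling the left halves into $G_{I,1}$ and the right halves into $G_{I,2}$, exactly as in the proof of Corollary \ref{col:matr}, produces the stated two-block form, with $G_{I,1}, G_{I,2}$ identical to those arising for $C$ itself. For $C_{ext}$, Theorem \ref{theor:8} leaves the already-split summands of $C$ untouched (hence the same $G_{I,1}, G_{I,2}$ reappear) and enlarges every mixed summand $B_j$ to $A_j$; so $C_{ext}$'s matrix is again block-diagonal. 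To see that its remaining rows form exactly $(G_{I,3} \mid 0)$ and $(0 \mid G_{I,4})$ with the same $G_{I,3}, G_{I,4}$ as in $C$, I would, for each enlarged $j$, choose a basis of $\overline{A_j}$ compatible with the original basis of $\overline{B_j}$: the pair $\{e_{f_j}(a), be_{f_j}(a)\}$ of Lemma \ref{lem:7} for $1\le j\le\zeta(n)$; the basis $\mathcal{B}(A_j)\,P(a)$ of Remark \ref{remark:7} for $\zeta(n)+1\le j\le r$; and the basis $\mathcal{B}(A_j)(u(a)e_{f_j}(a)+v(a)e_{f^*_j}(a))$ of Remark \ref{remark:8} for $r+1\le j\le r+s$ with $B_j=I_j(-x_j,1)$. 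A direct application of (\ref{eq:phipsi}) then shows that each original mixed row $(g \mid h)$ of $\overline{B_j}$ decomposes as the sum $(g \mid 0) + (0 \mid h)$ of rows drawn from the new basis.

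The main obstacle is the last case: when $B_j = I_j(-x_j,1)$ with $j>r$, the split basis of $\overline{A_j}$ consists of four families ($a^i b^k u(a)e_{f_j}(a)$ and $a^i b^k v(a)e_{f^*_j}(a)$ for $k = 0,1$), and each original mixed row of $\overline{B_j}$ decomposes into a sum of two of these four types. One must carefully pair off the left and right halves so that, after reindexing, the collected matrices $G_{I,3}$ and $G_{I,4}$ coincide with those produced by Corollary \ref{col:matr} applied directly to $C$. This is pure bookkeeping, requiring only the orthogonality $e_{f_j}(a)\,e_{f^*_j}(a) = 0$ together with the non-vanishing conditions $u(\alpha_j)\neq 0$, $v(\alpha_j^{-1})\neq 0$ already verified in Remarks \ref{remark:7} and \ref{remark:8}.
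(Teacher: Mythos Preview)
Your proposal is correct and follows exactly the route the paper intends: the paper states Corollary \ref{col:mind} without proof, merely citing Theorems \ref{theor:8}, \ref{theor:9} and Corollary \ref{col:matr}, and your argument is a faithful unpacking of that citation. In particular, your identification of which summands $B_j$ yield the block-diagonal rows $(T_{j,1}\mid 0),(0\mid T_{j,2})$ versus the mixed rows $(T_{j,1}\mid T_{j,2})$ matches Corollaries \ref{col:3}, \ref{col:5}, \ref{col:7}, and your use of the alternative bases from Remarks \ref{remark:7} and \ref{remark:8} to split each promoted $A_j$ so that the halves coincide with the original $G_{I,3},G_{I,4}$ is precisely the mechanism these remarks were recorded for.
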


\begin{remark}
	Let $C_n = \langle h \mid h^n \rangle$ be the cyclic group of order $n$. As is well--known, $\mathbb{F}_q C_n \simeq {\mathbb{F}_q[x]}/{(x^n - 1)}$. It follows that, any ideal $I \subset \mathbb{F}_qC_n$ is of the form $I = (\mathbb{F}_qC_n) g(h)$, where $g(x) \in \mathbb{F}_q[x]$ and $g(x) \mid x^n - 1$. Note that $I \subset \mathbb{F}_q C_n$ is also called a cyclic code of length $n$, and $g(x)$ is called the generating polynomial of $I$ (see \cite{Lint99}, chapter 6).  
\end{remark}

Let $C$ be a dihedral code. The following theorem provides generating polynomials of $C_{int}$ and $C_{ext}$.

\begin{theorem} \label{theor:pol}
	Let $\gcd(q, n) = 1$. Consider (\ref{eq:form})--(\ref{eq:15}). Let $C \subset \mathbb{F}_qD_{n}$ be a dihedral code such that
	\[	\mathcal{P}(C) = \bigoplus_{j=1}^{r+s} B_j, \quad
	B_j = \begin{cases}
	A_j, & j \in K_1 \\
	I_j(0, 1) & j \in K_2 \\
	I_j(1, 0), & j \in K_3 \\		
	0, & j \notin K_1 \cup K_2	\cup K_3
	\end{cases},
	\]
	where $K_1 \subset \{ 1, \dots, r+s \}$ and $K_2, K_3 \subset \{ r+1, \dots, r+s \}$ are some pairwise disjoint sets. Let
	\[ g(x) = \left( \prod_{\substack{j \in K_1 \cup K_2}} f_j(x) \right) \left(
	\prod_{\substack{r + 1 \leq j \leq r+s \\ j \in K_1 \cup K_3}} f^*_j(x) \right) \]
	Then $C = (\mathbb{F}_qD_{n}) f(a)$, where $f(x) = (x^n - 1) / g(x)$. 
\end{theorem}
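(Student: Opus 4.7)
The plan is to reduce to cyclic code theory. First I observe that the hypotheses force $C$ to be $\langle a \rangle_{D_n}$--induced, so $C = (\mathbb{F}_q D_n) T$ for the cyclic code $T := C \cap \mathbb{F}_q \langle a \rangle_{D_n}$; then I identify $T$ explicitly as the principal ideal generated by $f(a)$, and conclude.

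For the first step, I apply Theorem \ref{theor:9} to the given decomposition of $C$ (which has $J_4 = \emptyset$ and $J_2, J_3$ already contained in $\{r+1, \dots, r+s\}$, so the input $J_\ell$'s coincide with the $K_\ell$'s). The conclusion of that theorem then reads $\mathcal{P}(C_{int}) = \mathcal{P}(C)$, whence $C_{int} = C$. By definition of the interior induced code, this gives $C = (\mathbb{F}_q D_n)(C \cap \mathbb{F}_q\langle a \rangle_{D_n}) = (\mathbb{F}_q D_n) T$.

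For the second step, I describe $T$ by divisibility. An element $P(a) \in \mathbb{F}_q\langle a \rangle_{D_n}$ lies in $T$ iff $\mathcal{P}_j(P(a)) \in B_j$ for every $j$. From the definitions in (\ref{eq:2}), $\gamma_j(P(a)) = P(\alpha_j)$ for $1 \leq j \leq \zeta(n)$, and $\tau_j(P(a))$ is the diagonal matrix with entries $P(\alpha_j), P(\alpha_j^{-1})$ for $j \geq \zeta(n)+1$ (with a subsequent conjugation by $\sigma_j$ for $\zeta(n)+1 \leq j \leq r$, which is harmless since it does not affect membership in $A_j$ or in $\{0\}$). Running through the cases: for $j \in K_1$ no condition arises; for $1 \leq j \leq r$ with $j \notin K_1$ one has $B_j = 0$, forcing $P(\alpha_j) = P(\alpha_j^{-1}) = 0$, which (since $f_j = f_j^*$ in this range) is equivalent to $f_j(x) \mid P(x)$; for $r+1 \leq j \leq r+s$, comparing the diagonal image against the descriptions of $I_j(0,1)$ and $I_j(1,0)$ in (\ref{eq:a1}) shows that $f_j(x) \mid P(x)$ is required exactly when $j \notin K_1 \cup K_2$, and $f_j^*(x) \mid P(x)$ is required exactly when $j \notin K_1 \cup K_3$.

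Collecting these divisibility constraints yields a single condition $f(x) \mid P(x)$, where $f$ is the product of the listed irreducible factors; a direct bookkeeping check verifies that $f(x) \cdot g(x) = x^n - 1$ for the $g(x)$ stated in the theorem, so $f(x) = (x^n - 1)/g(x)$. Since $\gcd(q,n) = 1$ the polynomial $x^n - 1$ is squarefree and $f \mid x^n - 1$, hence via the isomorphism $\Phi: \mathbb{F}_q\langle a \rangle_{D_n} \to \mathcal{R}_n$ the set $T$ corresponds to the principal ideal $f(x) \mathcal{R}_n$, so $T = f(a)\,\mathbb{F}_q\langle a \rangle_{D_n}$. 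Combining with the first step gives $C = (\mathbb{F}_q D_n) T = (\mathbb{F}_q D_n) f(a)$, as required. The only real obstacle is the case-by-case bookkeeping that matches the product of the collected irreducible factors with $(x^n-1)/g(x)$; everything else is an immediate consequence of the preliminaries and of Theorem \ref{theor:9}.
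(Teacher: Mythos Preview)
Your argument is correct, but it follows a genuinely different route from the paper's.

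The paper works \emph{constructively}: using the basis descriptions of Lemmas~\ref{lem:7}--\ref{lem:8} (via Theorem~\ref{theor:matr}) it writes every element of $C$ as an $\mathbb{F}_qD_n$--combination of the idempotents $e_{f_j}(a)$ and $e_{f_j^*}(a)$, then invokes an external result (Lemma~1 of \cite{VedDeu20}) to sum these idempotents into the single idempotent $e_g(a)$, and finally uses the standard identity $(\mathbb{F}_q\langle a\rangle_{D_n})\,e_g(a)=(\mathbb{F}_q\langle a\rangle_{D_n})\,f(a)$.

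You instead work \emph{structurally}: you first apply Theorem~\ref{theor:9} to see that the hypotheses force $C_{int}=C$, hence $C=(\mathbb{F}_qD_n)\,T$ with $T=C\cap\mathbb{F}_q\langle a\rangle_{D_n}$, and then determine $T$ directly by the root conditions $P(\alpha_j)=0$, $P(\alpha_j^{-1})=0$ that characterise membership of $\mathcal{P}_j(P(a))$ in each $B_j$. Your case analysis (in particular the reading of $I_j(0,1)$ and $I_j(1,0)$ for $r+1\le j\le r+s$ from (\ref{eq:a1})) is correct, and the bookkeeping identity $f(x)g(x)=x^n-1$ follows exactly as you say.

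What each route buys: the paper's proof hands you, as a byproduct, the explicit idempotent generator $e_g(a)$ of $C$, which ties neatly into the idempotent machinery of (\ref{eq:ef}) used elsewhere. Your approach is more self-contained---it avoids the external citation and the idempotent formula entirely---and makes the link to classical cyclic-code theory (generating polynomial via divisibility by irreducible factors) more transparent. Either proof is perfectly acceptable here.
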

\begin{proof}
	Using Theorem \ref{theor:matr} and Lemmas \ref{lem:7}--\ref{lem:8}, we see that the code $C$ can be represented as
	\[ C = \left\{
		 \sum_{\substack{1 \leq j \leq r \\ j \in K_1}} t_j e_{f_j}(a) +
		 \sum_{\substack{r + 1 \leq j \leq r+s \\ j \in K_1 \cup K_2}} t_j e_{f_j}(a) +
		 \sum_{\substack{r + 1 \leq j \leq r+s \\ j \in K_1 \cup K_3}} t_j e_{f^*_j}(a)
		 \mathlarger \; \mid \; t_j \in \mathbb{F}_qD_{n}
	\right\}.
	\]
	Since
	$ e_{f_i}(a) e_{f_j}(a) = e_{f^*_i}(a) e_{f_j}(a) = e_{f^*_i}(a) e_{f^*_j}(a) = 0 $
	for $i \neq j$ (see (\ref{eq:ef})), it follows that
	\[ C = (\mathbb{F}_qD_{n}) t(a), \quad 
		t(a) =   \sum_{\substack{1 \leq j \leq r \\ j \in K_1}} e_{f_j}(a) +
		\sum_{\substack{r + 1 \leq j \leq r+s \\ j \in K_1 \cup K_2}} e_{f_j}(a) +
		\sum_{\substack{r + 1 \leq j \leq r+s \\ j \in K_1 \cup K_3}} e_{f^*_j}(a).
	\]
	Using Lemma 1 of \cite{VedDeu20}, we get
	\[ t(a) = e_g(a), \quad 
	g(x) = \left( \prod_{\substack{1 \leq j \leq r \\ j \in K_1}} f_j(x) \right) \left( 
	\prod_{\substack{r + 1 \leq j \leq r+s \\ j \in K_1 \cup K_2}} f_j(x) 
	\right) \left(
	\prod_{\substack{r + 1 \leq j \leq r+s \\ j \in K_1 \cup K_3}} f^*_j(x) \right). \]
	Definition (\ref{eq:ef}) implies that $(\mathbb{F}_q\langle a \rangle_{D_{n}})e_g(a) = (\mathbb{F}_q\langle a \rangle_{D_{n}}) f(a)$; this concludes the proof of the theorem.
\end{proof}

\begin{remark}
	Let $p(x)$ be a monic divisor of $x^n - 1$, and let
	\[ \begin{split}
	K_1 = \left\{ j = 1, \dots, r+s \mid \left(f_j(x) \nmid p(x) \right) \land \left( f^*_j(x) \nmid p(x) \right) \right\}, \\
	 K_2 = \left\{ j = r+1, \dots, r+s \mid \left(f_j(x) \nmid p(x) \right) \land \left( f^*_j(x) \mid p(x) \right) \right\}, \\
	K_3 = \left\{ j = r+1, \dots, r+s \mid \left(f_j(x) \mid p(x) \right) \land \left( f^*_j(x) \nmid p(x) \right) \right\}; 
	\end{split}
	\]
	then we see that $p(x)$ is equal to $f(x)$ from Theorem \ref{theor:9}. Therefore, there is a one--to--one correspondence between the cyclic codes of length $n$, which are generated by a monic divisor of $x^n-1$, and pairwise disjoint sets $K_1 \; (\subset \{ 1, \dots, r+s \})$ and $K_2, K_3 \; (\subset \{ r+1, \dots, r+s \})$.
\end{remark}

\section{Code parameters and some examples} \label{sec:est}
In this section some approximate estimates of code parameters are provided. In addition, some illustrative examples of the dihedral codes are given.

Let $C \subset \mathbb{F}_qG$ be a group code. Recall that the main code parameters are the length $|G|$, the dimension $k = \dim_{\mathbb{F}_q}(C)$ and the minimum distance $d(C) = \min_{c \in C, c \neq 0} w(c)$. If $G=D_{n}$, then the length and the dimension are pretty easy to find. Indeed, $|D_{n}| = 2n$, and if $C$ is given by (\ref{eq:form})--(\ref{eq:15}), then using (\ref{eq:2}), (\ref{eq:dim}) and (\ref{eq:a1}), we obtain
\begin{equation}  \label{eq:30}
\begin{split}
	\dim_{\mathbb{F}_q}(C) = \sum_{j=1}^{r+s} \dim_{\mathbb{F}_q}(B_j) = 
	2\left(\sum_{\substack{1 \leq j \leq r \\ j \in J_1}} \deg(f_j) \right) +
	\left( \sum_{\substack{1 \leq j \leq r \\ j \in J_2 \cup J_3 \cup J_4}} \deg(f_j) \right)
	+ \\ + 4 \left( \sum_{\substack{r+1 \leq j \leq r+s \\ j \in J_1}} \deg(f_j) \right) + 
	2 \left(\sum_{\substack{r+1 \leq j \leq r+s \\ j \in J_2 \cup J_3 \cup J_4}} \deg(f_j) \right).
\end{split}
\end{equation} 
In addition, Formula (\ref{eq:30}) and Corollary \ref{col:2} imply that any self--dual code in $\mathbb{F}_qD_{n}$ has dimension $k = n$.

Using (\ref{eq:chain}), we also obtain the bound $d(C_{ext}) \leq d(C) \leq d(C_{int})$ on the minimum distance. In addition, since
\[ C_{int} =\mathbb{F}_qD_{n}(\mathbb{F}_q \langle a \rangle_{D_{n}} \cap C), \quad C_{ext} = \mathbb{F}_qD_{n}\mathtt{pr}_a(C), \]
it follows that $d(C_{ext}) = d(\mathbb{F}_q \langle a \rangle_{D_{n}} \cap C)$, $d(C_{ext}) = d(\mathtt{pr}_a(C))$.
Note that for some dihedral codes the lower bound can be specified.
\begin{theorem} \label{theor:11}
Let $\gcd(q, n) = 1$, let $C \subset \mathbb{F}_qD_{n}$ be a dihedral code such that
\[
	\mathcal{P}(C) = \bigoplus_{j=1}^{r+s} B_j, \quad
	B_j = \begin{cases}
	A_j, & j \in J_1 \\
	I_j(0, 1), & j \in J_2 \\
	I_j(1, 0), & j \in J_3 \\
	I_j(-x_j, 1) & j \in J_4 \\			
	0, & j \notin J_1 \cup J_2	\cup J_3 \cup J_4
	\end{cases},
\]
\[J_1 = J_2 \cap \{r+1, \dots, r+s\} = J_3 \cap \{r+1, \dots, r+s\} = \varnothing;
\] then $d(C) \geq 2d(C_{ext})$.	
\end{theorem}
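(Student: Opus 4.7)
The plan is to reduce the bound to the identity $d(C_{ext}) = d(\mathtt{pr}_a(C))$ (which is recorded just above the theorem) by showing that under the stated hypothesis every nonzero codeword of $C$ splits as $P(a) + bQ(a)$ with \emph{both} $P$ and $Q$ nonzero elements of $\mathtt{pr}_a(C)$. Since then $w(c) = w(P) + w(Q) \geq 2 d(\mathtt{pr}_a(C))$, the result will follow.

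Concretely, first I would invoke Theorem \ref{theor:9} to compute $C_{int}$. The index sets appearing there become $K_1 = J_1 = \varnothing$, $K_2 = J_2 \cap \{r+1,\dots,r+s\} = \varnothing$, and $K_3 = J_3 \cap \{r+1,\dots,r+s\} = \varnothing$ by the hypothesis, so $\mathcal{P}(C_{int}) = 0$ and hence $C_{int} = 0$. Because $C \cap \mathbb{F}_q \langle a \rangle_{D_n}$ is always contained in $C_{int} = \mathbb{F}_qD_n(C \cap \mathbb{F}_q \langle a \rangle_{D_n})$, this yields
\[
C \cap \mathbb{F}_q \langle a \rangle_{D_n} = 0.
\]

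Now take any nonzero $c = P(a) + bQ(a) \in C$. Since $C$ is a left ideal, $bc \in C$; using $b^2 = e$ and $bP(a) = P(a^{-1})b$, one rewrites $bc = Q(a) + bP(a)$ in the canonical form of (\ref{eq:pq}). Hence $P(a) = \mathtt{pr}_a(c) \in \mathtt{pr}_a(C)$ and $Q(a) = \mathtt{pr}_a(bc) \in \mathtt{pr}_a(C)$. Moreover, if $P = 0$ then $c = bQ(a)$, so $bc = Q(a) \in C \cap \mathbb{F}_q \langle a \rangle_{D_n} = 0$, forcing $Q = 0$ and contradicting $c \neq 0$; symmetrically $Q \neq 0$. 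Therefore both summands have weight at least $d(\mathtt{pr}_a(C))$, and
\[
w(c) = w(P) + w(Q) \geq 2 d(\mathtt{pr}_a(C)) = 2 d(C_{ext}),
\]
which finishes the proof.

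The main obstacle is really just the first bookkeeping step: verifying that the hypothesis exactly cancels all three contributions $K_1, K_2, K_3$ in the description of $C_{int}$ from Theorem \ref{theor:9}, so that $C_{int}$ collapses to zero. Once that is in hand, the remainder is the short algebraic manipulation with $b$ and the observation that $bC = C$.
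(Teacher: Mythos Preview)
Your argument is correct. The step computing $C_{int}=0$ via Theorem~\ref{theor:9} is exactly right under the hypothesis, and the element-wise manipulation with $c$ and $bc$ is sound: from $bc = Q(a) + bP(a)$ you correctly extract $Q(a)=\mathtt{pr}_a(bc)\in\mathtt{pr}_a(C)$, and the vanishing of either half forces $c\in C\cap\mathbb{F}_q\langle a\rangle_{D_n}=0$.

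However, your route is genuinely different from the paper's. The paper does not pass through $C_{int}$ or Theorem~\ref{theor:9} at all; instead it invokes the matrix machinery of Section~\ref{sec:matr} (Theorem~\ref{theor:matr} and Corollary~\ref{col:matr}) to observe that, under the hypothesis, the block-diagonal part of the generating matrix is absent and one is left with a generating matrix of the shape $(G_{C,1}\mid S\,G_{C,1})$ with $S$ invertible and $G_{C,1}$ a generating matrix of the cyclic code $\mathtt{pr}_a(C)$. Corollary~\ref{col:mind} then identifies $\left(\begin{smallmatrix}G_{C,1}&0\\0&S\,G_{C,1}\end{smallmatrix}\right)$ as a generating matrix of $C_{ext}$, and the bound follows from the invertibility of $S$. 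Your approach trades this matrix bookkeeping for the single structural fact $C_{int}=0$ and a short computation with $b$; it is more self-contained and does not rely on the explicit bases of Lemmas~\ref{lem:7}--\ref{lem:8}. The paper's approach, on the other hand, makes the connection to Corollary~\ref{col:mind} explicit and shows concretely how the generating matrices of $C$ and $C_{ext}$ are built from the same cyclic block.
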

\begin{proof} Theorem \ref{theor:matr} and corollary \ref{col:matr} imply that a generator matrix of $C$ can be represented as
\[ \begin{pmatrix}
G_{C, 1} & SG_{C, 1}
\end{pmatrix}, \]
where $G_{C, 1}$ is a generating matrix of a cyclic code, and $S$ is an invertible matrix. Corollary \ref{col:mind} implies that a generating matrix of $C_{ext}$ is of the form
\[ \begin{pmatrix}
G_{C, 1} & 0 \\ 0 & SG_{C, 1}
\end{pmatrix}, \]
this concludes the proof of the theorem.
\end{proof}

In fact, for some codes the equality $d(C) = 2d(C_{ext})$ holds. Below, we give a sufficient condition.
\begin{theorem} \label{theor:12}
	Let $\gcd(q, n) = 1$, let $K \subset \{1, \dots, r+s\}$ be a nonempty set, and let $C \subset \mathbb{F}_qD_{n}$ be a code such that
	\[
	\mathcal{P}(C) = \bigoplus_{j=1}^{r+s} B_j, \quad
	B_j = \begin{cases}
	I_j(1, 0), & (j \in K) \land \left( 1 \leq j \leq \zeta(n) \right) \\
	I_j(\alpha_j + \alpha_j^{-1}, 2), & (j \in K) \land \left( \zeta(n) + 1 \leq j \leq r \right) \\
	I_j(-1, 1), & (j \in K) \land \left(  r+1 \leq j \leq r+s \right) \\
	0, & j \notin K
	\end{cases};
	\] then $d(C) = 2d(C_{ext})$.	
\end{theorem}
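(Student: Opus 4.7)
The plan is to combine the lower bound $d(C) \geq 2 d(C_{ext})$ supplied by Theorem \ref{theor:11} with a matching upper bound. The hypothesis of Theorem \ref{theor:11} is straightforward: in the decomposition of $\mathcal{P}(C)$ no $B_j$ equals $A_j$, and the only $J_2, J_3$ indices occur for $1 \leq j \leq \zeta(n) \leq r$. For the upper bound I would prove that $C = \{\omega + b\sigma(\omega) : \omega \in \mathtt{pr}_a(C)\}$, where $\sigma$ is the $\mathbb{F}_q$-linear involution on $\mathbb{F}_q\langle a \rangle_{D_n}$ defined by $\sigma(\omega)(a) = \omega(a^{-1})$. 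Since $\sigma$ permutes the basis $\{1, a, \ldots, a^{n-1}\}$, it is weight-preserving, so $w(\omega + b\sigma(\omega)) = 2 w(\omega)$, and the minimum distance of $C$ is exactly twice that of $\mathtt{pr}_a(C)$, which is $d(C_{ext})$.

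First I would compute the canonical basis of each summand $\overline{B_j}$ using Lemmas \ref{lem:7}, \ref{lem:10}, \ref{lem:8} and verify that each basis vector has the form $\omega + b \sigma(\omega)$. For $1 \leq j \leq \zeta(n)$ the unique basis element $e_{f_j}(a) + b e_{f_j}(a)$ is of this form with $\omega = e_{f_j}(a)$, because $f_j \in \{x-1, x+1\}$ is auto-reciprocal and hence $\sigma(e_{f_j}(a)) = e_{f_j}(a)$. For $\zeta(n) + 1 \leq j \leq r$, substituting $u(y) = y$ and the constant $v(y) = 2$ into Lemma \ref{lem:10} gives $au(a+a^{-1}) - v(a+a^{-1}) = a^2 - 1 = (a-1)(a+1)$; combining the Bezout identities (\ref{eq:fg}) with $f_j(a) e_{f_j}(a) = 0$ yields $(a-1) F_{j,1}(a) e_{f_j}(a) = e_{f_j}(a)$ and $(a+1) G_{j,1}(a) e_{f_j}(a) = e_{f_j}(a)$, so the prefactor telescopes and the basis element collapses to $a^i (1+b) e_{f_j}(a) = a^i e_{f_j}(a) + b a^{-i} e_{f_j}(a)$, again $\omega + b\sigma(\omega)$ with $\omega = a^i e_{f_j}(a)$.

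For $r+1 \leq j \leq r+s$ the crucial identity is $\sigma(e_{f_j}(a)) = e_{f_j^*}(a)$; it follows from (\ref{eq:orth}), since $\sigma$ sends a root $\alpha$ of $f_j$ to $\alpha^{-1}$, a root of $f_j^*$, and the idempotent is determined by its values on the roots of $x^n - 1$. With this, the two basis families of Lemma \ref{lem:8} (taking $u = v = 1$) become $a^i e_{f_j}(a) + b a^{-i} e_{f_j^*}(a)$ and $a^i e_{f_j^*}(a) + b a^{-i} e_{f_j}(a)$, which are $\omega + b\sigma(\omega)$ for $\omega = a^i e_{f_j}(a)$ and $\omega = a^i e_{f_j^*}(a)$ respectively.

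By $\mathbb{F}_q$-linearity of $\sigma$, the full code is $C = \{\omega + b\sigma(\omega) : \omega \in L\}$, where $L$ is the $\mathbb{F}_q$-span of the $\omega$'s above; applying $\mathtt{pr}_a$ to the basis identifies $L$ with $\mathtt{pr}_a(C)$. Weight-preservation of $\sigma$ then gives $d(C) = 2 d(\mathtt{pr}_a(C)) = 2 d(C_{ext})$. The main obstacle will be verifying the algebraic collapse in case 2 (which relies on the specific choice $v(y) = 2$) and the idempotent identity $\sigma(e_{f_j}) = e_{f_j^*}$ in case 3; once these observations are secured, the minimum-distance computation becomes immediate.
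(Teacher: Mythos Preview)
Your argument is correct and follows the paper's proof almost verbatim: both invoke Theorem~\ref{theor:11} for the bound $d(C)\ge 2d(C_{ext})$, then compute the canonical bases from Lemmas~\ref{lem:7}--\ref{lem:8} to show every codeword has the doubled form (the paper writes it as $P(a)+P(a)b$, which is exactly your $\omega+b\sigma(\omega)$ since $P(a)b=bP(a^{-1})$). Your telescoping in case~2 and the identity $e_{f_j}(a^{-1})=e_{f_j^*}(a)$ in case~3 are precisely the computations the paper carries out, so nothing further is needed.
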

\begin{proof}
	Note that if $2 \nmid q$, then $I_j(\alpha_j + \alpha_j^{-1}, 2) = I_j((\alpha_j + \alpha_j^{-1})/2, 1)$; and if $2 \mid q$, then $I_j(\alpha_j + \alpha_j^{-1}, 2) = I_j(1, 0)$ (see Remark \ref{remark:4}). Theorem \ref{theor:11} implies that $d(C) \leq 2d(C_{ext})$. It remains to prove that there exists an element of weight $2d(C_{ext})$ in $C$. Theorem \ref{theor:matr} implies that $C$ has a basis of the form
	$T = \bigcup_{j=1}^{r+s} \mathcal{B}(B_j)$.
	Using Lemmas \ref{lem:7}--\ref{lem:8} and $be_{f_j}(a) =  e_{f_j}(a^{-1})b = e_{f_j^*}(a)b$, we obtain 
	\begin{enumerate}
		\item[1)] for $1 \leq j \leq \zeta(n)$:
		\[ \mathcal{B}(B_j) = \mathcal{B}(I_j(1, 0)) = \{ e_{f_j}(a) + be_{f_j}(a) \} = \{ e_{f_j}(a) + e_{f_j}(a)b \}; \]
		\item[2)] for $\zeta(n) + 1 \leq j \leq r$:
		\[ \begin{split}
			 \mathcal{B}(B_j) = \mathcal{B}(I_j(\alpha_j + \alpha_j^{-1}, 2)) =
			\{ a^i (1+b) e_{f_j}(a) \mid i=0,\dots,\deg(f_j)-1 \} = \\
			= \{ a^i e_{f_j}(a) + a^{i} e_{f_j}(a)b \mid i=0,\dots,\deg(f_j)-1 \};
			\end{split} 
		\]
		 \item[3)] for $r + 1 \leq j \leq r+s$:
		 \[ \begin{split}
		 	\mathcal{B}(B_j) = \mathcal{B}(-1, 1) = \{ a^i b^k (e_{f_j}(a) + be_{f^*_j}(a)) \mid i = 0, \dots, \deg(f_j)-1, \; k=0, 1 \} = \\
		 	\{ a^i e_{f_j}(a) + a^i e_{f_j}(a) b, \; a^i e_{f^*_j}(a) + a^i e_{f^*_j}(a) b \mid i = 0, \dots, \deg(f_j)-1 \}.
		 	\end{split}
		 \]
	\end{enumerate} 
	Hence any element of $C$ is of the form $P(a) + P(a)b$, where $P(a) \in C_{ext}$. Let $P(a)$ be an element of $C_{ext}$ such that $w(P(a)) = d(C_{ext})$, it follows that $w(P(a) + P(a)b) = 2d(C_{ext})$. The theorem is proved.
\end{proof}

\textbf{Example 1.} Let $q=5$ and let $n=4$. Consider $\mathbb{F}_qD_{n}$.  We have
\[ x^5-1 = \left( (x-1)(x+1) \right) \left( (x+2)(x-2) \right),  \]
here the polynomials $x-1$, $x+1$ are auto--reciprocal, and the polynomials $x+2$, $x-2$ are a pair of non-auto-reciprocal polynomials, i.e. $r = 2$, $s = 1$ (see (\ref{eq:dec})). Consider the code $C \subset \mathbb{F}_qD_{n}$, given by
\[ C = \mathcal{P}^{-1}(I_1(1, 0) \oplus I_2(1, 0) \oplus I_3(-2, 1)) \]
(see (\ref{eq:a1}) and Theorem \ref{theor:codes}). Clearly, its length is $2n = 8$, its dimension is $4$ (see (\ref{eq:30})), and by direct calculations we see that its minimum distance is $4$. In addition, $C_{out} = \mathbb{F}_qD_{n}$ and $d(C_{out}) = 1$. So, $C$ is a $[8, 4, 4]_5$--code, such that $d(C) > 2d(C_{out})$.

\textbf{Example 2.} In \cite{Mil19} a $[18, 2, 15]_{11}$--code in the semisimple algebra $\mathbb{F}_{11}D_{18}$ was considered. It was proved that this code is not combinatorially equivalent to any abelian code, and the weight of this code is the same as that of the best--known code of same dimension.

\textbf{Example 3.} 
Let $q=2$, $n=15$. Consider $\mathbb{F}_qD_{n}$. We have
\[ x^n-1 = \left( (x + 1)(x^2 + x + 1)(x^4 + x^3 + x^2 + x + 1) \right) 
\left( (x^4 + x + 1) (x^4 + x^3 + 1) \right) \]
(see (\ref{eq:dec})). Note that $r=3$, $s = 1$ in this case. Consider the code $C_1$, given by
\[ 
	C_1=\mathcal{P}^{-1}\left( I_1(1, 0) \oplus I_2(1, 0) \oplus I_3(1, 0) \oplus I_4(1, 1) \right) 
\] 
(see (\ref{eq:a1}) and Theorem \ref{theor:codes}). Clearly, $C_{out} = \mathbb{F}_qD_{n}$ and $d(C_{out}) = 1$.  Corollary \ref{col:2} implies that $C$ is a self--dual code, and Theorem \ref{theor:12} implies that its minimum distance is $2$. Hence $C_1$ is a binary self--dual dihedral $[30, 15, 2]$--code. Let
\[
C_2 = \mathcal{P}^{-1} \left(I_1(1, 0) \oplus I_2(1, 0) \oplus I_3(1, 0) \oplus I_4(1, 0) \right). 
\]
This code is also self--dual and $\dim_{\mathbb{F}_q}(C_2) = 15$. Using Theorem \ref{theor:matr}, we can build its generating matrix. And by direct calculation, we conclude that $d(C_2) = 6$.

Note that in \cite{Cao} the self--dual binary dihedral codes in $F_2D_{8m}$, i.e. when $\gcd(q, n) \neq 1$, were described. In particular, self--dual binary dihedral $[48, 24, 12]$--,  $[56, 28, 12]$--codes were built.

\textbf{Example 4.} In \cite{VedDeu18} an approach to obtain non--induced dihedral codes $C$, for which $\mathtt{pr}_a(C)$ is a Reed--Solomon code, was considered. Note that if $\mathtt{pr}_a(C)$ can correct up to $t$ errors, then the decoder of $\mathtt{pr}_a(C)$ can be used to decode $C \; (\subset C_{ext} = \mathbb{F}_qD_{n}\mathtt{pr}_a(C))$ if no more than $t$ errors occur in the channel. Below we give an example from \cite{VedDeu18} of such a code. We have
\[ x^{10} - 1 = \left( (x-1)(x+1)  \right) \left( [(x-2)(x-6)][(x-3)(x-4)][(x-7)(x-8)][(x-9)(x-5)] \right) \]
(see (\ref{eq:dec})). Here first two factors are auto--reciprocal, and the rest of the factor are non--auto--reciprocal, i.e. $r=2$, $s = 4$. Let
\[ C = \mathcal{P}^{-1} \left( A_1 \oplus 0 \oplus I_3(1, -1) \oplus A_4 \oplus 0 \oplus 0 \right) \]
(see (\ref{eq:a1}) and Theorem \ref{theor:codes}). Clearly, $\mathtt{pr}_a(C)$ is a $[10, 5, 6]$--code, and by direct calculations, we see that $C$ is $[20, 8, 8]$--code. Hence $\mathtt{pr}_a(C)$ can correct up to $2$ errors, and $C$ can correct up to $3$ errors. In addition, if no more than $2$ errors occur in the channel, then the decoder of $\mathtt{pr}_a(C)$ can be used to decode $C$.

\bibliographystyle{unsrt}
\bibliography{references}  
\end{document}